\theoremstyle{plain}
\newtheorem{thm}{Theorem}
  \theoremstyle{definition}
  \theoremstyle{remark}
  \newtheorem{rem}[thm]{Remark}
  \theoremstyle{plain}
  \newtheorem{prop}[thm]{Proposition}
  \theoremstyle{plain}
  \newtheorem{lem}[thm]{Lemma}
  \theoremstyle{plain}
  \newtheorem{cor}[thm]{Corollary}
 \theoremstyle{definition}
  \theoremstyle{remark}
  \newtheorem*{rem*}{Remark}
  \theoremstyle{definition}
\newtheorem*{question*}{\it{QUESTION}}
\theoremstyle{plain}
\newcommand{\N}{\mathbb{N}}
\newcommand{\R}{{\mathbb{R}}}
\newcommand{\C}{{\mathbb{C}}}
\newcommand{\Z}{{\mathbb{Z}}}
\newcommand{\dd}{{{\rm d}}}
\newcommand{\ii}{{\rm i}}
\newcommand{\spn}{\mathop\mathrm{span}\nolimits}
\newcommand{\Dom}{\mathop\mathrm{Dom}\nolimits}
\newcommand{\Ker}{\mathop\mathrm{Ker}\nolimits}
\newcommand{\spec}{\mathop\mathrm{spec}\nolimits}
\renewcommand{\Re}{\mathop\mathrm{Re}\nolimits}
\renewcommand{\Im}{\mathop\mathrm{Im}\nolimits}
\newcommand{\Res}{\mathop\mathrm{Res}\nolimits}
\newcommand{\sn}{\mathop\mathrm{sn}\nolimits}
\newcommand{\cn}{\mathop\mathrm{cn}\nolimits}
\newcommand{\cs}{\mathop\mathrm{cs}\nolimits}
\begin{document}

\title[Spectral analysis of two doubly infinite Jacobi matrices]{Spectral analysis of two doubly infinite Jacobi matrices with exponential entries}

\author{Mourad E. H. Ismail}
\address[Mourad E. H. Ismail]{
Department of Mathematics, University of Central Florida, Orlando, Florida 32816, USA
 \& Department of Mathematics, King Saud University, Riyadh, Saudi Arabia}
\email{mourad.eh.ismail@gmail.com}
 
\author{Franti\v sek \v Stampach}
\address[Franti\v sek \v Stampach]{
	Department of Mathematics, 
	Stockholm University,
	Kr\"{a}ftriket 5,
	SE - 106 91 Stockholm, Sweden}
\email{stampach@math.su.se}

\subjclass[2010]{47B36, 33D90}

\keywords{doubly infinite Jacobi matrix, spectral analysis, discrete Schr{\" o}dinger operator, theta function, Ramanujan entire function, $q$-Bessel function}

\date{\today}

\begin{abstract}
We provide a complete spectral analysis of all self-adjoint operators acting on $\ell^{2}(\Z)$ which are associated with two doubly infinite Jacobi matrices with entries given by
\[
 q^{-n+1}\delta_{m,n-1}+q^{-n}\delta_{m,n+1}
\]
and
\[
 \delta_{m,n-1}+\alpha q^{-n}\delta_{m,n}+\delta_{m,n+1},
\]
respectively, where $q\in(0,1)$ and $\alpha\in\R$. As an application, we derive orthogonality relations for the Ramanujan entire function and the third Jackson $q$-Bessel function.
\end{abstract}

\maketitle

\section{Introduction}

This paper is devoted to an investigation of spectral properties of two families of unbounded Jacobi operators
acting on $\ell^{2}(\Z)$. They are determined by two doubly infinite tridiagonal matrices whose entries
are given in terms of powers of a parameter $q$ where we always assume $0<q<1$.

First, we consider an operator $A=A(q)$ acting on the $n$-th vector of the standard basis of~$\ell^{2}(\Z)$ as
\begin{equation}
 Ae_{n}=q^{-n+1}e_{n-1}+q^{-n}e_{n+1}, \quad n\in\Z.
\label{eq:Ae_n}
\end{equation}
The operator $A$ endowed with the domain $\Dom A=\spn\{e_{n}\mid n\in\Z\}$ is a symmetric operator
with deficiency indices $(1,1)$. Thus, there is a whole one-parameter family of self-adjoint extensions 
of $A$ whose domains as well as their fundamental spectral properties will be described in details.

The recurrence relation determined by \eqref{eq:Ae_n} has a long history. It appeared earlier in the proofs of the Rogers--Ramanujan 
identities and the Ramanujan continued fraction. For example, Schur used them in his analysis of the identities, see \cite{Schur}. They also appeared in the 
generalization of the Rogers--Ramanujan identities, referred to as the $m$-version, given by Garrett, Ismail, and Stanton in 
\cite{GarrretIsmailStanton}.  Carlitz also used \eqref{eq:Ae_n} to study an $m$-version of the Rogers--Ramanujan 
identities  for $m \le 0$ in   \cite{Carlitz}.  For details, we refer the interested reader to \cite[Sec.~13.5 and 13.6]{Ismail}.

The semi-infinite parts of $A$, i.e., the semi-infinite Jacobi operators determined by \eqref{eq:Ae_n} with indices being restricted to $\Z_{\pm}$ (non-negative/non-positive integers)
have been studied before and their spectral properties are well known, see \cite{AlSalamIsmail, ChenIsmail, Stampach}. The results from \cite{AlSalamIsmail, ChenIsmail, Stampach}
are not formulated as spectral properties of the corresponding Jacobi operators but equivalently as specific properties of corresponding families of orthogonal polynomials. 
It is worth mentioning that the spectral properties of these semi-infinite Jacobi operators were always accompanied with interesting mathematical objects: the Ramanujan continued fraction,
the Ramanujan entire function and its zeros \cite{AlSalamIsmail}; and interesting mathematical phenomena like indeterminate Hamburger moment problem with known Nevanlinna parametrization 
functions~\cite{ChenIsmail, Stampach}.  

The second part of the paper is devoted to a study of a second operator $B=B(\alpha,q)$ whose action is defined by the formula
\begin{equation}
 Be_{n}=e_{n-1}+\alpha q^{-n}e_{n}+e_{n+1}, \quad n\in\Z,
\label{eq:Be_n}
\end{equation}
where $\alpha\in\R$. The relation \eqref{eq:Be_n} gives rise to a unique self-adjoint discrete Schr{\" o}dinger operator
with a potential depending on a coupling constant $\alpha$. Again, the spectral properties of the corresponding semi-infinite 
Jacobi operators has been studied before. For some results within the theory of orthogonal polynomials, see \cite{IsmailMulla}.

Throughout this article, we shall follow the standard notation for basic hypergeometric series as in \cite{AndrewsAskeyRoy, GasperRahman}. For the general theory on spectral analysis
of Jacobi operators, the reader is referred to \cite{Akhiezer, Berezanskii, Teschl}.

The paper is
organized as follows. In Section~\ref{sec:opA}, the operator $A$ is treated. It is shown the corresponding difference expression is in the limit circle case at $+\infty$
and domains of all self-adjoint extensions $A_{t}$, $t\in\R\cup\{\infty\}$, are described by respective boundary conditions. We show the continues spectrum equals
essential spectrum and they coincide with $\{0\}$ for all the self-adjoint extensions. A suitable reparametrization of the parameter labeling the self-adjoint extensions
allows us to obtain the discrete part of the spectrum of $A_{t}$ in a full explicit form. Here the crucial role is played by Jacobian elliptic functions and elliptic integrals. 
Further, we derive a formula for corresponding eigenvectors in terms of basic hypergeometric series and compute their $\ell^{2}$-norm. As an application of deduced spectral 
properties of $A$, we obtain several orthogonality relations for the Ramanujan entire function.

In Section~\ref{sec:opB}, the spectral analysis of $B$ is treated. We show the essential spectrum of $B$ coincides with the absolutely continuous spectrum of $B$ and
equals the spectrum of the free discrete Schr{\" o}dinger operator $[-2,2]$. In addition, the discrete spectrum of $B$ is found fully explicitly, corresponding eigenvectors are expressed in terms of 
basic hypergeometric series and their $\ell^{2}$-norm is computed. Moreover, we derive a formula for an arbitrary matrix element of the spectral measure of $B$ by finding its
density in the absolutely continues part of the spectrum. At last, some consequences concerning third Jackson's $q$-Bessel functions are discussed.

\section{Spectral analysis of operator $A$}
\label{sec:opA}

\subsection{General properties of operator $A$}

Let us denote $\Z_{+}=\{0,1,2,\dots\}$, $\Z_{-}=-\Z_{+}$ and $P_{\pm}$ the orthogonal projections on $\spn\{e_{n} \mid n\in\Z_{\pm}\}$.
Then $P_{+}AP_{+}=0\oplus A_{+}$ and $P_{-}AP_{-}=A_{-}\oplus 0$ where $A_{+}$ act on $\ell^{2}(\Z_{+})$ and $A_{-}$ acts on $\ell^{2}(\Z_{-})$, respectively. 
Clearly, 
\begin{equation}
 A=A_{-}\oplus 0 + 0\oplus A_{+},
\label{eq:A_decomp}
\end{equation}
since $\langle e_{0},Ae_{0}\rangle=0$.

\begin{prop}
 Deficiency indices of $A$ are $(1,1)$.
\end{prop}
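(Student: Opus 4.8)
The plan is to obtain the deficiency indices from the Weyl limit point/limit circle classification of the associated difference expression at its two singular ends. Write $\tau$ for the formal action of $A$, so that $(\tau u)_{n}=q^{-n+1}u_{n-1}+q^{-n}u_{n+1}$ for a sequence $u=(u_{n})_{n\in\Z}$; the operator $A$ is the restriction of $\tau$ to the finitely supported sequences $\Dom A=\spn\{e_{n}\mid n\in\Z\}$. By the whole-line version of Weyl's theorem for Jacobi operators (see, e.g., \cite{Teschl}), the deficiency indices of $A$ are $(d,d)$ with $d=d_{+}+d_{-}$, where $d_{\pm}\in\{0,1\}$ equals $1$ exactly when $\tau$ is in the limit circle case at $\pm\infty$ and equals $0$ in the limit point case. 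Since this dichotomy is independent of the spectral parameter, I would test it on the solutions of $\tau u=0$, which are by far the simplest to write down.

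First I would solve $\tau u=0$ explicitly. Because the diagonal of $A$ vanishes, the equation $q^{-n+1}u_{n-1}+q^{-n}u_{n+1}=0$ collapses to the two-step recurrence $u_{n+2}=-q\,u_{n}$, which decouples the even- and odd-indexed subsequences; its general solution is determined by the free data $u_{0},u_{1}$ and satisfies $u_{2k}=(-q)^{k}u_{0}$ and $u_{2k+1}=(-q)^{k}u_{1}$ for all $k\in\Z$. Near $+\infty$ these decay geometrically, so
\[
\sum_{n\ge0}|u_{n}|^{2}=\bigl(|u_{0}|^{2}+|u_{1}|^{2}\bigr)\sum_{k\ge0}q^{2k}<\infty
\]
for every choice of $u_{0},u_{1}$; hence all solutions are square summable at $+\infty$ and $\tau$ is in the limit circle case there, giving $d_{+}=1$. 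Near $-\infty$, on the other hand, $|u_{2k}|=q^{k}|u_{0}|$ and $|u_{2k+1}|=q^{k}|u_{1}|$ blow up as $k\to-\infty$ whenever $(u_{0},u_{1})\neq(0,0)$, so no nontrivial solution is square summable at $-\infty$ and $\tau$ is in the limit point case there, giving $d_{-}=0$. (The limit point conclusion at $-\infty$ can alternatively be read off from the Carleman-type criterion, since $\sum_{n\le0}q^{n}=+\infty$.) Combining, $d=d_{+}+d_{-}=1$, so the deficiency indices of $A$ are $(1,1)$.

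The argument is short, and I expect no serious obstacle beyond two points of care. The first is the justification that the single value $z=0$ decides the classification: this rests on the parameter-independence of the Weyl dichotomy, which is what lets the conclusion be transported to the values $z=\pm\ii$ that actually enter the count $\dim\Ker(A^{*}\mp\ii)$ of the deficiency subspaces. The second is the need to track both parities of the decoupled recurrence, so that the entire two-dimensional solution space is genuinely square summable at $+\infty$ before declaring the limit circle case. The underlying mechanism is the strong asymmetry of the weights $q^{-n}$: they explode at $+\infty$, forcing solutions to decay and hence the limit circle case, and they vanish at $-\infty$, forcing solutions to grow and hence the limit point case.
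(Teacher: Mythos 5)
Your proof is correct, and while it shares the paper's structural skeleton, it fills in the two ends differently. The common core is the additivity of deficiency indices over the two singular endpoints, $d=d_{+}+d_{-}$ — this is exactly the Masson--Repka theorem that the paper cites. Where you diverge is in computing $d_{\pm}$: the paper disposes of $-\infty$ by noting that $A_{-}$ is bounded (hence indices $(0,0)$) and imports $d_{+}=1$ from the orthogonal-polynomial literature \cite{ChenIsmail, Stampach}, whereas you determine both ends self-containedly by solving $\tau u=0$ in closed form (the recurrence $u_{n+2}=-q\,u_{n}$ with decoupled parities), verifying that the full two-dimensional solution space is square summable at $+\infty$ while no nontrivial solution is square summable at $-\infty$, and then invoking the invariance of the limit circle/limit point dichotomy in the spectral parameter — a point you rightly flag, since square summability at $z=0$ says nothing about the deficiency spaces $\Ker(A^{*}\mp\ii)$ without it; this invariance is standard and holds for any single $z_{0}\in\C$, real values included. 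Your route buys self-containedness (no appeal to external results on $A_{+}$) and, as a bonus, your explicit solutions are precisely the sequences $p,q$ of \eqref{eq:p_q_def} that the paper needs anyway: the same non-square-summability at $-\infty$ reappears in the proof of Proposition \ref{prop:spec_A_c_eq_0} to exclude $0$ from the point spectrum, and the solutions again underlie the boundary conditions of Proposition \ref{prop:dom_A_t}. The paper's route buys brevity: boundedness of $A_{-}$ settles $d_{-}=0$ with no computation, at the cost of leaning on cited results for $d_{+}$.
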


\begin{proof}
 By the theorem of Masson and Repka, see \cite{MassonRepka}, deficiency indices of $A$ equals the sum of
 deficiency indices of $A_{-}$ and $A_{+}$. Deficiency indices of $A_{-}$ are $(0,0)$ for it is a bounded operator,
 while deficiency indices of $A_{+}$ are known to be $(1,1)$, see \cite{ChenIsmail, Stampach}. 
\end{proof}

\begin{prop}\label{prop:spec_A_c_eq_0}
 Let $A_{t}$ be a self-adjoint extension of $A$, then
 \[
 \spec_{ess}(A_t)=\spec_{c}(A_t)=\{0\}.
 \]
\end{prop}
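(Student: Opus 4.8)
The plan is to extract both equalities from the decomposition \eqref{eq:A_decomp}, treating the two ends $\pm\infty$ separately. First I would use the standard fact that all self-adjoint extensions of a symmetric operator with finite deficiency indices share the same essential spectrum; hence $\spec_{ess}(A_t)$ does not depend on $t$ and may be computed after any bounded finite-rank perturbation of $A_t$. The convenient perturbation is the decoupling of the chain at the origin: the off-diagonal weight joining $e_0$ and $e_1$ equals $q^{0}=1$, so subtracting the bounded rank-two operator $x\mapsto\langle e_1,x\rangle e_0+\langle e_0,x\rangle e_1$ from $A_t$ changes the resolvent only by a finite-rank operator and therefore preserves $\spec_{ess}$ by Weyl's theorem. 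The perturbed operator is an orthogonal sum $\hat A_-\oplus\hat A_+$, where $\hat A_-$ acts on $\ell^{2}(\{0,-1,-2,\dots\})$ and $\hat A_+$ on $\ell^{2}(\{1,2,\dots\})$, the latter inheriting from $A_t$ the boundary condition at $+\infty$. Consequently $\spec_{ess}(A_t)=\spec_{ess}(\hat A_-)\cup\spec_{ess}(\hat A_+)$.

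I would then analyze the two summands. The matrix of $\hat A_-$ has vanishing diagonal and off-diagonal entries $q,q^{2},q^{3},\dots$, which tend to $0$; a Jacobi matrix with null diagonal and off-diagonal entries converging to zero is compact, so $\spec_{ess}(\hat A_-)=\{0\}$. For $\hat A_+$ I would invoke the limit circle property at $+\infty$ underlying the deficiency index $(1,1)$ of the preceding proposition (cf.\ \cite{ChenIsmail, Stampach}): in the limit circle case every self-adjoint extension has compact resolvent and thus purely discrete spectrum, whence $\spec_{ess}(\hat A_+)=\emptyset$. Together these give $\spec_{ess}(A_t)=\{0\}$.

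For the continuous spectrum I would first note the general inclusion $\spec_c(A_t)\subseteq\spec_{ess}(A_t)=\{0\}$, valid for self-adjoint operators, so only the reverse inclusion, i.e.\ that $0$ is a point of continuous rather than point spectrum, needs an argument. Any $x\in\Dom(A_t)$ with $A_tx=0$ is an $\ell^{2}(\Z)$ solution of the formal recurrence $Ax=0$, which reads $q^{-m}x_{m+1}+q^{-m+1}x_{m-1}=0$, i.e.\ $x_{m+1}=-q\,x_{m-1}$. Its general solution is $x_{2k}=(-q)^{k}x_0$ and $x_{2k+1}=(-q)^{k}x_1$ for $k\in\Z$, and since $|x_{2k}|=q^{k}|x_0|\to\infty$ as $k\to-\infty$ (and likewise for the odd indices), no nontrivial solution lies in $\ell^{2}(\Z)$. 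Hence $0$ is not an eigenvalue of $A_t$. As every nonzero spectral point belongs to $\spec_{disc}(A_t)\subseteq\spec_p(A_t)$ while $0\in\spec(A_t)\setminus\spec_p(A_t)$, the decomposition $\spec=\spec_p\sqcup\spec_c$ (the residual spectrum being empty for self-adjoint operators) yields $\spec_c(A_t)=\{0\}$.

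The delicate point I would expect to guard most carefully is not the soft topology of the essential spectrum but precisely this last step: the compact summand $\hat A_-$ contributes eigenvalues accumulating at $0$, so it is a priori conceivable that the coupling and the boundary condition defining $A_t$ promote $0$ itself to an eigenvalue; ruling this out is exactly what the explicit non-$\ell^{2}$ solution of $Ax=0$ accomplishes. A secondary point requiring care is the legitimacy of the decoupling for the unbounded $A_t$, which is ensured here only because the severed bond carries the bounded weight $q^{0}=1$.
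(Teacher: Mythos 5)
Your proposal is correct and follows essentially the same route as the paper's proof: a splitting of $A_t$ at the origin combined with Weyl's theorem, with the negative half-line piece compact (off-diagonal entries $q,q^2,\dots\to0$) and the positive half-line piece purely discrete by the limit circle property at $+\infty$, followed by the same explicit zero-energy solutions $x_{2k}=(-q)^{k}x_0$, $x_{2k+1}=(-q)^{k}x_1$ (the paper's sequences $p,q$ of \eqref{eq:p_q_def}), which blow up as $k\to-\infty$ and hence exclude $0$ from the point spectrum. The only cosmetic difference is bookkeeping: the paper splits exactly, writing $A_{t}=A_{-}\oplus 0 + 0\oplus(A_{+})_{t}$ with overlapping half-lines (possible since $\langle e_{0},Ae_{0}\rangle=0$) and treats the compact summand $A_{-}\oplus0$ as the Weyl perturbation, with $0\in\spec_{ess}$ arising as an infinite-multiplicity eigenvalue of $0\oplus(A_{+})_{t}$, whereas you sever the bond between $e_{0}$ and $e_{1}$ by an explicit bounded rank-two perturbation and locate $\{0\}$ in the essential spectrum of the compact half $\hat A_{-}$.
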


\begin{proof}
 Since the deficiency indices of $A_{+}$ are $(1,1)$, any self-adjoint extension $(A_{+})_{t}$ of $A_{+}$ is an operator with discrete spectrum, 
 see, for example, \cite[Lem.~2.19]{Teschl}. Hence the essential spectrum of the full operator $0\oplus(A_{+})_{t}$ (acting on $\ell^{2}(\Z)$)
 coincides with $\{0\}$, for $0$ is an eigenvalue of infinite multiplicity. The remainder $A_{-}\oplus0$,
 in the decomposition
 \[
  A_{t}=A_{-}\oplus 0 + 0\oplus(A_{+})_{t},
 \]
 is a compact operator. Consequently, the equality $\spec_{ess}(A_t)=\{0\}$ follows from 
 the Weyl theorem on the stability of the essential spectrum of a self-adjoint operator.
 
 It is easy to check that the sequences $p=\{p_{n}\}_{n\in\Z}$ and $q=\{q_{n}\}_{n\in\Z}$, given by equalities
 \begin{equation}
  p_{2n-1}=q_{2n}=0, \quad p_{2n}=q_{2n+1}=(-q)^{n},
 \label{eq:p_q_def}  
 \end{equation}
 for $n\in\Z$, are two linearly independent solutions of the eigenvalue equation $A\psi=0$. Clearly, neither the solution $p$ nor $q$ is square summable
 at $-\infty$. Thus, $0$ can not be an eigenvalue of $A_t$, for any $t$, since the corresponding eigenvector
 would have to be a nontrivial linear combination of $p$ and $q$.
\end{proof}

\begin{rem}
 Recall the essential spectrum of a self-adjoint operator contains either eigenvalues of infinite algebraic multiplicity, or
 accumulation points of the spectrum. Taking into account the previous proposition and the fact that the multiplicity of an 
 eigenvalue of a Jacobi operator can not exceed two, $0$ has to be the only (finite) accumulation point of eigenvalues for any~$A_{t}$.
\end{rem}

\subsection{Solutions of the eigenvalue equation}

First, we provide two general solutions of the eigenvalue equation
\begin{equation}
 q^{-n+1}\psi_{n-1}+q^{-n}\psi_{n+1}=x\psi_{n}, \quad \forall n\in\Z.
\label{eq:eigenval_diff_eq_A}
\end{equation}

\begin{prop}\label{prop:sol_qExp}
 For any $x\in\C$, the sequences $\psi^{\pm}$, where 
 \begin{equation}
  \psi_{n}^{\pm}=\psi_{n}^{\pm}(x):=(\pm\ii)^{n}q^{n/2}{}_{1}\phi_{1}\!\left(0;-q;q,\mp\ii x q^{n+1/2}\right)\!, \quad n\in\Z,
 \label{eq:psipm_def}
 \end{equation}
 form a couple of linearly independent solutions of the difference equation \eqref{eq:eigenval_diff_eq_A}.
\end{prop}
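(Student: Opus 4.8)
The plan is to collapse the second-order difference equation \eqref{eq:eigenval_diff_eq_A} into a single $q$-functional equation for the entire function
\[
 g(z):={}_{1}\phi_{1}\!\left(0;-q;q,z\right)=\sum_{k=0}^{\infty}\frac{(-1)^{k}q^{\binom{k}{2}}}{(-q;q)_{k}(q;q)_{k}}\,z^{k},
\]
and then to check that equation directly from its power series. Writing $z_{n}^{\pm}:=\mp\ii x q^{n+1/2}$, the definition \eqref{eq:psipm_def} reads $\psi_{n}^{\pm}=(\pm\ii)^{n}q^{n/2}g(z_{n}^{\pm})$, and the structural fact driving everything is that a shift of $n$ merely rescales the argument: $z_{n\pm1}^{\pm}=q^{\pm1}z_{n}^{\pm}$. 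Note first that the factor $q^{\binom{k}{2}}$ makes the series entire, so every term-by-term manipulation below is legitimate.

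First I would substitute $\psi_{n-1}^{\pm},\psi_{n}^{\pm},\psi_{n+1}^{\pm}$ into \eqref{eq:eigenval_diff_eq_A} and factor out the common prefactor $(\pm\ii)^{n}q^{n/2}$. Using $(\pm\ii)^{-1}=\mp\ii$ and $(\mp\ii)(\pm\ii)=1$, the two powers of $q$ produced by the off-diagonal entries $q^{-n+1}$ and $q^{-n}$ combine into the same exponent; expressing $q^{-n+1/2}(\pm\ii)$ through $z_{n}^{\pm}$ via $q^{n+1/2}=\pm\ii z_{n}^{\pm}/x$ then shows that both sign choices collapse to the \emph{single}, $x$-independent identity
\[
 q\bigl(g(qz)-g(z/q)\bigr)=z\,g(z),\qquad z\in\C.
\]
I expect this sign and prefactor bookkeeping to be the only delicate point; once it is carried out, the $\pm$ cases become literally identical, and the step is reversible, so conversely the identity yields \eqref{eq:eigenval_diff_eq_A} for every $x\in\C$ and every $n$ (the value $x=0$, where the reduction nominally divides by $x$, being covered since the verified identity is $x$-free).

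It then remains to confirm the functional equation. Expanding $g(z)=\sum_{k\ge0}c_{k}z^{k}$ and comparing coefficients of $z^{k}$, the $z^{0}$ term is trivial and for $k\ge1$ the identity is equivalent to the three-term recurrence
\[
 c_{k}\bigl(q^{k}-q^{-k}\bigr)=q^{-1}c_{k-1},\qquad\text{i.e.}\qquad \frac{c_{k}}{c_{k-1}}=\frac{-q^{k-1}}{1-q^{2k}}.
\]
This is immediate from $c_{k}=(-1)^{k}q^{\binom{k}{2}}/\bigl((-q;q)_{k}(q;q)_{k}\bigr)$ upon using $(-q;q)_{k}/(-q;q)_{k-1}=1+q^{k}$, $(q;q)_{k}/(q;q)_{k-1}=1-q^{k}$, and $\binom{k}{2}-\binom{k-1}{2}=k-1$, so this part is purely routine.

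Finally, for linear independence I would use the Casoratian. For the diagonal-free recurrence with off-diagonal weights $a_{n}=q^{-n}$, a one-line computation shows $W_{n}:=q^{-n}\bigl(\psi_{n}^{+}\psi_{n+1}^{-}-\psi_{n+1}^{+}\psi_{n}^{-}\bigr)$ is independent of $n$. Since $z_{n}^{\pm}\to0$ and $g(0)=1$ as $n\to+\infty$, one has $\psi_{n}^{\pm}\sim(\pm\ii)^{n}q^{n/2}$, and evaluating the leading term gives $W_{n}=-2\ii\sqrt{q}\neq0$; being constant, this is the exact value for all $n$. Hence $\psi^{+}$ and $\psi^{-}$ are linearly independent, which completes the proof.
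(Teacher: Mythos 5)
Your proposal is correct and follows essentially the same route as the paper: a direct verification that $\psi^{\pm}$ solve \eqref{eq:eigenval_diff_eq_A} (which you carry out explicitly by reducing to the $x$-free functional equation $q\bigl(g(qz)-g(z/q)\bigr)=z\,g(z)$ and comparing power-series coefficients, where the paper delegates this computation to \cite{Stampach}), followed by the identical linear-independence argument via the constancy of the Wronskian and its evaluation in the limit $n\to\infty$. Your value $-2\ii q^{1/2}$ agrees with the paper's $2\ii q^{1/2}$ up to your opposite ordering convention in the Casoratian, so there is no discrepancy.
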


\begin{proof}
 The verification of this statement has been essentially given within the proof of Proposition~4 in \cite{Stampach}.
 Is is straightforward to check that $\psi^{\pm}$ solve the equation \eqref{eq:eigenval_diff_eq_A}. Next, for the Wronskian 
 \[
 W(\psi^{+},\psi^{-})=q^{-n}\left(\psi_{n+1}^{+}\psi_{n}^{-}-\psi_{n}^{+}\psi_{n+1}^{-}\right)\!,
 \]
 one has $W(\psi^{+},\psi^{-})=2\ii q^{1/2}\neq0$, as one easily deduces by taking limit $n\to\infty$ in the above formula.
 This implies the linear independence of $\psi^{+},\psi^{-}$.
\end{proof}

\begin{rem}
 Note that the solutions $\psi^{\pm}$ can be expressed in terms of a $q$-analogue to the exponential function. Namely,
 \[
  \psi_{n}^{\pm}(x)=(\pm\ii)^{n}q^{n/2}\exp_{q^{2}}\!\left(\pm\frac{\ii x q^{n+1/2}}{1-q^{2}}\right)\!, \quad n\in\Z
 \]
 where 
 \[
  \exp_{q^{2}}(z)={}_{1}\phi_{1}\!\left(0;-q^{1/2};q^{1/2},-(1-q)z\right)\!,
 \]
 see \cite[Eq.~(1.3.26)]{GasperRahman}. The connection between solutions of \eqref{eq:eigenval_diff_eq_A} and $q$-exponential functions has already been 
 observed in \cite{ChenIsmail} and play an important role in the study of some exceptional solutions of the corresponding indeterminate Hamburger moment 
 problem, see \cite{Stampach} for details.
\end{rem}

It is clear that both solutions $\psi^{\pm}$ are square summable at $+\infty$. However, 
it can be shown that 
this is not the case at $-\infty$. Hence, one may expect there are non-trivial coefficients $a=a(x)$ and 
$b=b(x)$ such that the linear combination $a\psi^{+}+b\psi^{-}\in\ell^{2}(\Z_{-})$, at least for some 
$x\in\R$. In fact, this is true for all $x\in\C\setminus\{0\}$, as it is shown in the Proposition~\ref{prop:l2_sol} below.

Let us denote
\begin{equation}
\theta_{q}(x):=\left(x,qx^{-1};q\right)_{\infty} = \frac{1}{(q;q)_\infty} \sum_{n=-\infty}^\infty  q^{\binom{n}{2}}(-x)^n, \quad x\in\C,
\label{eqdeftheta}
 \end{equation}
where the second equality is known as the Jacobi triple product identity, see \cite[Eq.~(II.28)]{GasperRahman}. 
By using \eqref{eqdeftheta}, it is easy to verify that 
for $x\neq 0$, 
 \begin{equation}
  (-1;q)_{\infty}\,_{0}\phi_{1}\left(-;0;q^{2},q^{5}x^{-2}\right)\!
  =\theta_{q}\!\left(-q^{-1}x\right){}_{1}\phi_{1}\left(0;-q;q,x\right)+\theta_{q}\!\left(q^{-1}x\right)
  {}_{1}\phi_{1}\left(0;-q;q,-x\right)\!,
 \label{eq:3trr}
  \end{equation}
a connection formula needed in the proof of the following proposition.

\begin{prop}\label{prop:l2_sol}
 For all $x\in\C\setminus\{0\}$, the sequence
 \begin{equation}
  \varphi(x):=\theta_{q}\left(-\ii q^{-1/2}x\right)\psi^{(-)}(x)+\theta_{q}\left(\ii q^{-1/2}x\right)\psi^{(+)}(x),
  \label{eq:phi_def}
 \end{equation}
 where $\theta_{q}$ is given by \eqref{eqdeftheta}, is the non-trivial solution of \eqref{eq:eigenval_diff_eq_A}
 which belongs to $\ell^{2}(\Z)$. Moreover, within the space $\ell^{2}(\Z)$, this solution is given uniquely up to 
 a multiplicative constant.
\end{prop}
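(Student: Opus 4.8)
The plan is to establish the two assertions---membership in $\ell^{2}(\Z)$ and uniqueness up to scale---by separate arguments. For fixed $x$ the sequence $\varphi(x)$ in \eqref{eq:phi_def} is a constant-coefficient linear combination of $\psi^{\pm}$, hence it solves \eqref{eq:eigenval_diff_eq_A}, and by the observation preceding the proposition both $\psi^{\pm}$ are square summable at $+\infty$, so $\varphi(x)$ is as well. The whole content on the existence side is therefore square summability at $-\infty$, and this is exactly what the connection formula \eqref{eq:3trr} is for.

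To exploit it, I would substitute $-\ii x q^{n+1/2}$ for the free variable in \eqref{eq:3trr}. On the left this turns the $_{0}\phi_{1}$ argument into $-q^{4-2n}x^{-2}$; on the right the two $_{1}\phi_{1}$ factors become precisely the series occurring in $\psi_{n}^{+}$ and $\psi_{n}^{-}$ (up to the prefactors $(\pm\ii)^{n}q^{n/2}$ of \eqref{eq:psipm_def}), while the theta factors become $\theta_{q}(\pm\ii x q^{n-1/2})$. The decisive step is to remove the $n$-dependence of these theta factors through the quasiperiodicity relation $\theta_{q}(q^{n}y)=(-1)^{n}q^{-\binom{n}{2}}y^{-n}\theta_{q}(y)$, obtained by iterating $\theta_{q}(qy)=-y^{-1}\theta_{q}(y)$, which in turn is read off from \eqref{eqdeftheta}. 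Once the accumulated powers of $\ii$, $q$ and $x$ cancel, the right-hand side collapses to $q^{-\binom{n}{2}}x^{-n}\varphi_{n}(x)$, and \eqref{eq:3trr} becomes the closed form
\[
\varphi_{n}(x)=(-1;q)_{\infty}\,q^{\binom{n}{2}}x^{n}\,{}_{0}\phi_{1}\!\left(-;0;q^{2},-q^{4-2n}x^{-2}\right)\!.
\]
From this the behaviour at $-\infty$ is immediate: as $n\to-\infty$ the argument $-q^{4-2n}x^{-2}$ tends to $0$, so the $_{0}\phi_{1}$ factor tends to $1$, whereas $q^{\binom{n}{2}}$ decays superexponentially and overwhelms the at-most-exponential factor $x^{n}$. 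Hence $\varphi_{n}(x)\to0$ fast enough to be square summable at $-\infty$, and the same formula shows $\varphi_{n}(x)\neq0$ for all sufficiently negative $n$, so $\varphi(x)$ is nontrivial. I expect the bookkeeping in this cancellation of prefactors---making the substitution and the theta transformation line up exactly---to be the main obstacle.

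For uniqueness I would use only the conserved Wronskian, avoiding further special-function input. As in the proof of Proposition~\ref{prop:sol_qExp}, the quantity $q^{-n}\bigl(\psi_{n+1}\chi_{n}-\psi_{n}\chi_{n+1}\bigr)$ is independent of $n$ for any two solutions $\psi,\chi$ of \eqref{eq:eigenval_diff_eq_A}. Suppose $\chi\in\ell^{2}(\Z)$ solves \eqref{eq:eigenval_diff_eq_A}; then $\chi_{n}\to0$ and $\varphi_{n}(x)\to0$ as $n\to-\infty$, while $q^{-n}\to0$ in the same limit. Evaluating the constant Wronskian of $\varphi(x)$ and $\chi$ along $n\to-\infty$ then yields $0$, so it vanishes identically and $\chi$ must be a scalar multiple of $\varphi(x)$. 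This simultaneously confirms the limit point situation at $-\infty$ and gives the asserted uniqueness.
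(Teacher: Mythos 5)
Your proof is correct and takes essentially the same route as the paper's: substituting $\mp\ii x q^{n+1/2}$ into the connection formula \eqref{eq:3trr} and removing the $n$-dependence of the theta factors via \eqref{eq:theta_power_q_id} yields exactly the paper's closed form \eqref{eq:varphi_2nd_expr}, from which the superexponential decay $q^{n(n-1)/2}$ at $-\infty$ gives square summability, and your constant-Wronskian argument (all three factors $q^{-n}$, $\varphi_n$, $\chi_n$ tend to $0$ as $n\to-\infty$, forcing the Wronskian to vanish) is the contrapositive of the paper's uniqueness argument. The only cosmetic deviations are the sign choice in the substitution, which is immaterial by the symmetry of \eqref{eq:3trr}, and that you read off non-triviality of $\varphi(x)$ from the closed form for large negative $n$, where the paper instead invokes the absence of common zeros of $\theta_q(\pm x)$ together with the linear independence of $\psi^{\pm}$.
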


\begin{proof}
 First note that functions $\theta_{q}\left(\pm x\right)$ have no common zeros. Consequently, $\varphi(x)\neq0$,
 for all $x\in\C\setminus\{0\}$, due the linear independence of $\psi^{(+)}(x),\psi^{(-)}(x)$.
 
 Clearly, $\varphi(x)$ solves \eqref{eq:eigenval_diff_eq_A}, for, by Proposition \ref{prop:sol_qExp}, it is a 
 linear combination of solutions of~\eqref{eq:eigenval_diff_eq_A}.
 
 Next, to show that $\varphi(x)\in\ell^{2}(\Z)$, it suffices to verify the square summability of $\varphi(x)$ 
 at~$-\infty$. By writing $\ii q^{n+1/2}x$ instead of $x$ in the connection formula \eqref{eq:3trr}, using the identity
 \begin{equation}
  \theta_{q}\left(q^{k}x\right)=(-1)^{k}x^{-k}q^{-k(k-1)/2}\theta_{q}\left(x\right), \quad k\in\Z,
 \label{eq:theta_power_q_id}
 \end{equation}
 together with definitions \eqref{eq:psipm_def} and \eqref{eq:phi_def}, one arrives at the formula
 \begin{equation}
  \varphi_{n}(x)=(-1;q)_{\infty}\,x^{n}q^{n(n-1)/2}\,_{0}\phi_{1}\left(-;0;q^{2},-q^{-2n+4}x^{-2}\right)\!, \quad n\in\Z,\ x\neq0.
 \label{eq:varphi_2nd_expr}
 \end{equation}
 The expression on the RHS is readily seen to be square summable for $n$ at $-\infty$ for all $x\neq0$.
 
 Finally, to prove the uniqueness, take another solution $\chi$ of \eqref{eq:eigenval_diff_eq_A} such that $\chi,\varphi$
 are linearly independent. The Wronskian $q^{-n}\left(\varphi_{n+1}\chi_{n}-\varphi_{n}\chi_{n+1}\right)$ is thus a nonzero
 constant. From the expression for the Wronskian, it is clear however, the sequence $\chi_{n}$ can not even be bounded as $n\to-\infty$, 
 since otherwise the Wronskian would vanish.
\end{proof}

Next, we derive a formula for the $\ell^{2}$-norm of the solution \eqref{eq:phi_def} for $x\in\R\setminus\{0\}$. First, however,
we prove a lemma which we will use during the derivation. For this purpose, we define the function
\begin{equation}
 \xi_{q}(z):=\sum_{n=-\infty}^{\infty}\frac{q^{n/2}}{1+zq^{n-1/2}},
\label{eq:def_xi} 
\end{equation}
for $z\in\C\setminus\{-q^{n+1/2}\mid n\in\Z\}$. Clearly, the function $\xi_{q}$ is meromorphic with simple poles located at the points
$-q^{n+1/2}$, $n\in\Z$.

\begin{lem}
 For all $z\notin\{-q^{n+1/2}\mid n\in\Z\}\cup\{0\}$, it holds
 \begin{equation}
  \xi_{q}(z)=\frac{\left(q;q\right)_{\infty}^{2}}{\left(q^{1/2};q\right)_{\infty}^{\!2}}\frac{\theta_{q}\left(-z\right)}{\theta_{q}\left(-q^{-1/2}z\right)}.
  \label{eq:xi_eq_ratio_theta}
 \end{equation}
\end{lem}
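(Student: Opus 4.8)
The plan is to treat both sides of \eqref{eq:xi_eq_ratio_theta} as meromorphic functions on $\C^{*}:=\C\setminus\{0\}$ and to run a Liouville-type argument based on a shared multiplicative functional equation. Write $F(z):=\xi_{q}(z)$ and let $G(z)$ denote the right-hand side. A direct estimate of the general term of \eqref{eq:def_xi} shows that the series converges locally uniformly on $\C^{*}$ away from the points $-q^{n+1/2}$ (the terms decay like $q^{|n|/2}$ as $n\to\pm\infty$), so $F$ is meromorphic with simple poles exactly there. On the other side, $\theta_{q}(-z)$ and $\theta_{q}(-q^{-1/2}z)$ are holomorphic and zero-free on $\C^{*}$ except for their simple zeros; reading off the zeros from \eqref{eqdeftheta} shows that the denominator $\theta_{q}(-q^{-1/2}z)$ vanishes precisely at $z=-q^{n+1/2}$, $n\in\Z$, while the numerator vanishes at $z=-q^{n}$, so $G$ is meromorphic on $\C^{*}$ with the same simple poles as $F$.

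Next I would establish that both functions satisfy $H(qz)=q^{-1/2}H(z)$. For $F$ this follows by substituting $z\mapsto qz$ in \eqref{eq:def_xi} and shifting the summation index by one. For $G$ it follows from the quasi-periodicity \eqref{eq:theta_power_q_id} with $k=1$, namely $\theta_{q}(qx)=-x^{-1}\theta_{q}(x)$: applied to the numerator and denominator this yields $\theta_{q}(-qz)=z^{-1}\theta_{q}(-z)$ and $\theta_{q}(-q^{1/2}z)=q^{1/2}z^{-1}\theta_{q}(-q^{-1/2}z)$, whence $G(qz)=q^{-1/2}G(z)$. Consequently the difference $D:=F-G$ is meromorphic on $\C^{*}$ with at worst simple poles at $-q^{n+1/2}$ and obeys $D(qz)=q^{-1/2}D(z)$.

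The decisive step is to match the residues at a single pole, say $z_{0}=-q^{1/2}$ (the index $n=0$). Only the $n=0$ term of \eqref{eq:def_xi} is singular there, giving $\Res_{z=z_{0}}F=q^{1/2}$. For $G$ I would use the local behaviour of the theta function at its zero: from $(x;q)_{\infty}=(1-x)(xq;q)_{\infty}$ one gets $\theta_{q}(x)\sim(1-x)(q;q)_{\infty}^{2}$ as $x\to1$, so with $x=-q^{-1/2}z\to1$ the denominator behaves like $q^{-1/2}(z+q^{1/2})(q;q)_{\infty}^{2}$, while the numerator tends to $\theta_{q}(q^{1/2})=(q^{1/2};q)_{\infty}^{2}$. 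Together with the prefactor this gives $\Res_{z=z_{0}}G=q^{1/2}$ as well---indeed this computation is exactly what forces the normalizing constant $(q;q)_{\infty}^{2}/(q^{1/2};q)_{\infty}^{2}$. Since the functional equation scales the residue at $q^{k}z_{0}$ by $q^{k/2}$ on both sides, matching at $z_{0}$ propagates to all poles, so $D$ has vanishing residue at every pole and is therefore holomorphic on $\C^{*}$.

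Finally, a function holomorphic on the annulus $\C^{*}$ admits a Laurent expansion $D(z)=\sum_{k\in\Z}c_{k}z^{k}$ convergent there, and the relation $D(qz)=q^{-1/2}D(z)$ forces $c_{k}(q^{k}-q^{-1/2})=0$ for every $k$; since $q^{k}\neq q^{-1/2}$ for all $k\in\Z$, all $c_{k}$ vanish and $D\equiv0$, which is \eqref{eq:xi_eq_ratio_theta}. I expect the only genuine work to be the residue computation of the third step, where the simple-zero asymptotics of $\theta_{q}$ must be handled carefully; everything else is a routine, if slightly bookkeeping-heavy, application of the functional equation and the Laurent-coefficient argument.
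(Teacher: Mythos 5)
Your proof is correct, and it takes a genuinely different route from the paper. The paper converts the Mittag--Leffler series \eqref{eq:def_xi} into the Laurent expansion \eqref{eq:xi_laurent_ser} on the annulus $q^{1/2}<|z|<q^{-1/2}$, recognizes it as a bilateral ${}_{1}\psi_{1}$ series, sums it in closed form via Ramanujan's summation formula \cite[Eq.~(5.2.1)]{GasperRahman}, and then extends to all admissible $z$ by the identity principle. You avoid the ${}_{1}\psi_{1}$ machinery entirely: both sides satisfy $H(qz)=q^{-1/2}H(z)$, have the same simple poles at $-q^{n+1/2}$, and the residues agree at $z_{0}=-q^{1/2}$ --- your computations $\Res_{z=z_{0}}\xi_{q}=q^{1/2}$ from the $n=0$ term, and $\Res_{z=z_{0}}G=q^{1/2}$ from $\theta_{q}(x)\sim(1-x)(q;q)_{\infty}^{2}$ as $x\to1$ together with $\theta_{q}(q^{1/2})=(q^{1/2};q)_{\infty}^{2}$, are both right, as is the observation that the functional equation scales residues by $q^{1/2}$ from one pole to the next, so matching at one pole propagates to all. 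The difference is then holomorphic on $\C\setminus\{0\}$, and comparing Laurent coefficients under $D(qz)=q^{-1/2}D(z)$ kills every coefficient since $q^{k}\neq q^{-1/2}$ for $k\in\Z$. This is the classical Liouville-type argument for theta identities, and in effect you re-prove, self-contained, exactly the instance of Ramanujan's ${}_{1}\psi_{1}$ summation that the paper imports; what the paper's route buys is brevity given the standard reference, whereas yours makes transparent where the normalizing constant $(q;q)_{\infty}^{2}/(q^{1/2};q)_{\infty}^{2}$ comes from, namely the residue matching, at the cost of the bookkeeping you acknowledge.
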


\begin{proof}
 First, we rewrite the Mittag-Leffler expansion \eqref{eq:def_xi} in the Laurent series form. By splitting the sum in \eqref{eq:def_xi} into two sums with the summation indices
 ranging positive and non-negative integers, respectively, and making some simple manipulations one gets
 \begin{equation}
  \xi_{q}(z)=\chi_{q}(z)+z^{-1}\chi_{q}\left(z^{-1}\right)
 \label{eq:xi_eq_sum_chi}
 \end{equation}
 where
 \[
  \chi_{q}(z):=\sum_{n=1}^{\infty}\frac{q^{n/2}}{1+zq^{n-1/2}}.
 \]
 Further, one has
 \begin{align*}
  \chi_{q}(z)&=\sum_{n=1}^{\infty}q^{n/2}\sum_{k=0}^{\infty}\left(-z\right)^{k}q^{(2n-1)k/2}=\sum_{k=0}^{\infty}q^{-k/2}\left(-z\right)^{k}\sum_{n=1}^{\infty}q^{(2k+1)n/2}\\
  &=\sum_{k=0}^{\infty}\frac{q^{(k+1)/2}}{1-q^{k+1/2}}\left(-z\right)^{k}\!,
 \end{align*}
 for $|z|<q^{-1/2}$. By plugging the last formula into \eqref{eq:xi_eq_sum_chi}, one obtains the Laurent series expansion
 \begin{equation}
  \xi_{q}(z)=\sum_{k=-\infty}^{\infty}\frac{q^{(k+1)/2}}{1-q^{k+1/2}}\left(-z\right)^{k}
 \label{eq:xi_laurent_ser}
 \end{equation}
 which converges if $q^{1/2}<|z|<q^{-1/2}$. 
 
 The formula \eqref{eq:xi_laurent_ser} admits to express $\xi_{q}$ as the bilateral basic hypergeometric series ${}_{1}\psi_{1}$, see \cite[Chp.~5]{GasperRahman},
 which can be, in turn, sum up by the Ramanujan bilateral summation formula \cite[Eq.~(5.2.1)]{GasperRahman}. Namely, one has
 \[
  \xi_{q}(z)=\frac{q^{1/2}}{1-q^{1/2}}{}_{1}\psi_{1}\left(q^{1/2};q^{3/2};q,-q^{1/2}z\right)=q^{1/2}\frac{\left(q,q,-qz,-z^{-1};q\right)_{\infty}}{\left(q^{1/2},q^{1/2},-q^{1/2}z,-q^{1/2}z^{-1};q\right)_{\infty}}.
 \]
 The last formula written in terms of theta functions reads
 \[
  \xi_{q}(z)=q^{1/2}\frac{\left(q;q\right)_{\infty}^{2}}{\left(q^{1/2};q\right)_{\infty}^{\!2}}\frac{\theta_{q}\left(-z^{-1}\right)}{\theta_{q}\left(-q^{1/2}z\right)}
  =\frac{\left(q;q\right)_{\infty}^{2}}{\left(q^{1/2};q\right)_{\infty}^{\!2}}\frac{\theta_{q}\left(-z\right)}{\theta_{q}\left(-q^{-1/2}z\right)}
 \]
 where we used \eqref{eq:theta_power_q_id} and the identity
 \begin{equation}
  \theta_{q}\left(z\right)=-z\theta_{q}\left(z^{-1}\right)
 \label{eq:theta_recip_arg}
 \end{equation}
 which holds for all $z\neq0$. Consequently, \eqref{eq:xi_eq_ratio_theta} is established for $q^{1/2}<|z|<q^{-1/2}$, however, by applying the Identity Principle for analytic functions,
 the validity of \eqref{eq:xi_eq_ratio_theta} extends to all $z\notin\{-q^{n+1/2}\mid n\in\Z\}\cup\{0\}$.
\end{proof}

\begin{prop}\label{prop:l2norm}
 For all $z\in\mathbb{C}\setminus\{0\}$, one has
 \begin{equation}
  \sum_{n=-\infty}^{\infty}\varphi_{n}^{2}(z)=4\frac{\left(q^{2};q^{2}\right)_{\infty}^{\!2}}{\left(q;q^{2}\right)_{\infty}^{2}}\theta_{q^{2}}\left(-z^{2}\right)\!.
 \label{eq:norm_varphi}
 \end{equation}
\end{prop}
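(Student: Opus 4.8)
The plan is to sidestep the failure of square summability of $\psi^{\pm}$ at $-\infty$ by never summing them individually; instead I would compute the off-diagonal sum $\sum_{n}\varphi_{n}(z)\varphi_{n}(w)$ through a discrete Green's identity and then let $w\to z$. Writing $\varphi=\varphi(z)$, $\tilde\varphi=\varphi(w)$ for two solutions of \eqref{eq:eigenval_diff_eq_A} at spectral values $z$ and $w$, multiplying the recurrences crosswise and subtracting turns the diagonal into a first difference,
\[
(z-w)\varphi_{n}\tilde\varphi_{n}=W_{n}-W_{n-1},\qquad W_{n}:=q^{-n}\bigl(\tilde\varphi_{n}\varphi_{n+1}-\varphi_{n}\tilde\varphi_{n+1}\bigr),
\]
so that summation over $n\in\Z$ telescopes. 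By Proposition~\ref{prop:l2_sol} both sequences lie in $\ell^{2}(\Z)$, so the series converges absolutely; moreover \eqref{eq:varphi_2nd_expr} gives $\varphi_{n}(z)=O\!\bigl(z^{n}q^{n(n-1)/2}\bigr)$ as $n\to-\infty$, whence $W_{n}=O\!\bigl(q^{n^{2}-n}\bigr)\to0$ there and the boundary term at $-\infty$ drops out, while the one at $+\infty$ survives.

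First I would evaluate $\lim_{n\to+\infty}W_{n}$. Substituting \eqref{eq:phi_def} and the leading asymptotics $\psi^{\pm}_{n}\sim(\pm\ii)^{n}q^{n/2}$ (the ${}_{1}\phi_{1}$ factors are $1+O(q^{n})$ and die in the limit), the diagonal $(\pm\ii)^{2n}$ terms cancel while the cross terms carry $(\pm\ii)^{n}(\mp\ii)^{n}=1$; with $c_{\pm}(z):=\theta_{q}(\pm\ii q^{-1/2}z)$ this leaves
\[
\lim_{n\to+\infty}W_{n}=2\ii q^{1/2}\bigl(c_{+}(z)c_{-}(w)-c_{-}(z)c_{+}(w)\bigr),
\]
the constant $2\ii q^{1/2}$ matching the Wronskian $W(\psi^{+},\psi^{-})$ of Proposition~\ref{prop:sol_qExp}. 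Dividing by $z-w$ and passing to the confluent limit $w\to z$ (of the form $0/0$, resolved by l'Hôpital in $w$) gives the norm as a Wronskian of theta functions: with $u:=\ii q^{-1/2}z$,
\[
\sum_{n=-\infty}^{\infty}\varphi_{n}^{2}(z)=2\ii q^{1/2}\bigl(c_{-}c_{+}'-c_{+}c_{-}'\bigr)=-2\bigl(\theta_{q}(-u)\theta_{q}'(u)+\theta_{q}(u)\theta_{q}'(-u)\bigr),
\]
where $\theta_{q}'$ denotes the derivative in the argument.

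The step I expect to be the main obstacle is evaluating the \emph{symmetric} combination $S(u):=\theta_{q}(-u)\theta_{q}'(u)+\theta_{q}(u)\theta_{q}'(-u)$, because it is \emph{not} the derivative of $\theta_{q}(u)\theta_{q}(-u)=\theta_{q^{2}}(u^{2})$ (that yields the antisymmetric combination instead). I would expand both factors by the Jacobi triple product \eqref{eqdeftheta} to get
\[
S(u)=\frac{1}{(q;q)_{\infty}^{2}}\sum_{m,m'\in\Z}q^{\binom{m}{2}+\binom{m'}{2}}(-1)^{m}(m-m')\,u^{m+m'-1},
\]
and read off the coefficient of $u^{2N}$ via the substitution $m=N+1+t$, $m'=N-t$, which gives $\binom{m}{2}+\binom{m'}{2}=N^{2}+t(t+1)$ and reduces the inner sum to $\sum_{t\in\Z}(-1)^{t}(2t+1)q^{t(t+1)}$. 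This bilateral series is exactly Jacobi's identity in base $q^{2}$, equal to $2(q^{2};q^{2})_{\infty}^{3}$. Comparing with $\theta_{q^{2}}(qu^{2})=(q^{2};q^{2})_{\infty}^{-1}\sum_{N}(-1)^{N}q^{N^{2}}u^{2N}$ then identifies
\[
S(u)=-\frac{2(q^{2};q^{2})_{\infty}^{4}}{(q;q)_{\infty}^{2}}\,\theta_{q^{2}}(qu^{2}).
\]

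Finally, $qu^{2}=-z^{2}$, and the Euler splitting $(q;q)_{\infty}=(q;q^{2})_{\infty}(q^{2};q^{2})_{\infty}$ turns $(q^{2};q^{2})_{\infty}^{4}(q;q)_{\infty}^{-2}$ into $(q^{2};q^{2})_{\infty}^{2}(q;q^{2})_{\infty}^{-2}$, so $\sum_{n}\varphi_{n}^{2}(z)=-2S(u)=4\,(q^{2};q^{2})_{\infty}^{2}(q;q^{2})_{\infty}^{-2}\,\theta_{q^{2}}(-z^{2})$, which is \eqref{eq:norm_varphi}; obtained first for real $z$, it extends to all $z\in\C\setminus\{0\}$ by analyticity. (One could instead square \eqref{eq:varphi_2nd_expr} termwise and recognize a Mittag--Leffler series of the type \eqref{eq:def_xi} handled by the preceding lemma, but the Green's-identity route isolates the single nontrivial ingredient, Jacobi's identity, and is shorter.)
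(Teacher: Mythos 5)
Your proposal is correct, and its first half is essentially the paper's argument: the paper likewise starts from Green's formula, kills the boundary Wronskian at $-\infty$ using square summability, and evaluates the surviving limit at $+\infty$ from the leading asymptotics $\psi^{\pm}_{n}\sim(\pm\ii)^{n}q^{n/2}$, arriving at exactly your symmetric combination $-2\bigl(\theta_{q}(-u)\theta_{q}'(u)+\theta_{q}(u)\theta_{q}'(-u)\bigr)$ with $u=\ii q^{-1/2}z$. (The bookkeeping differs slightly: the paper restricts to real $x$, writes $\varphi=a\psi+\bar a\bar\psi$ with $a=\theta_{q}(-\ii q^{-1/2}x)$, sends $y\to x$ \emph{first} and then computes $\lim_{n}W_{n}(\varphi',\varphi)$, recovering complex $z$ by analyticity; your two-parameter version, confluating $w\to z$ only after the Wronskian limit is in closed theta form, works directly for complex $z$, so your closing appeal to real $z$ plus analyticity is actually unnecessary.) Where you genuinely diverge is in evaluating $S(u)$: the paper converts it via the logarithmic-derivative identity $\theta_{q}'(u)\theta_{q}(-u)+\theta_{q}'(-u)\theta_{q}(u)=-2\theta_{q}(u)\theta_{q}(-u)\sum_{k}q^{k}/(1-u^{2}q^{2k})$ into $4\theta_{q^{2}}(-q^{-1}z^{2})\,\xi_{q^{2}}(z^{2})$ and then invokes its dedicated lemma \eqref{eq:xi_eq_ratio_theta}, whose proof sums a ${}_{1}\psi_{1}$ by Ramanujan's bilateral summation; you instead Cauchy-multiply two triple-product expansions and collapse the inner sum, via $m=N+1+t$, $m'=N-t$ (your exponent computation $\binom{m}{2}+\binom{m'}{2}=N^{2}+t(t+1)$ and the bilateral-to-unilateral folding giving $2\sum_{t\geq0}(-1)^{t}(2t+1)q^{t(t+1)}=2(q^{2};q^{2})_{\infty}^{3}$ both check out), to Jacobi's identity. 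So your key ingredient is Jacobi's cube identity where the paper's is Ramanujan's ${}_{1}\psi_{1}$; your route is more self-contained (no auxiliary $\xi_{q}$ lemma, no Lambert-type series), while the paper's isolates \eqref{eq:xi_eq_ratio_theta} as a reusable statement. You also correctly flag the one genuinely delicate point — that $S(u)$ is the \emph{symmetric} combination, not the derivative of $\theta_{q}(u)\theta_{q}(-u)=\theta_{q^{2}}(u^{2})$ — and your symmetrized double-sum summand $(-1)^{m}(m-m')$ agrees with the direct expansion $m\bigl((-1)^{m}-(-1)^{m'}\bigr)$ on the support $m+m'$ odd, so the coefficient extraction is sound.
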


\begin{proof}
 We start with the Green's formula:
 \begin{equation}
  (x-y)\sum_{k=m+1}^{n}\varphi_{k}(x)\varphi_{k}(y)=W_{n}\left(\varphi(x),\varphi(y)\right)-W_{m}\left(\varphi(x),\varphi(y)\right)
 \label{eq:Green_phi}
 \end{equation}
 holding for all $m,n\in\mathbb{Z}$, $m\geq n$, where
 \[
  W_{n}\left(\varphi(x),\varphi(y)\right)=q^{-n}\left(\varphi_{n+1}(x)\varphi_{n}(y)-\varphi_{n}(x)\varphi_{n+1}(y)\right)
 \]
 and $\varphi$ is given by \eqref{eq:phi_def}. Clearly, $W_{m}\left(\varphi(x),\varphi(y)\right)\to0$, for $m\to-\infty$,
 since $\varphi_{n}(x)\in\ell^{2}(\mathbb{Z})$, for all $x\neq0$, by Proposition \ref{prop:l2_sol}. Thus, by sending $m\to-\infty$ in \eqref{eq:Green_phi}, one finds
 \[
  (x-y)\sum_{k=-\infty}^{n}\varphi_{k}(x)\varphi_{k}(y)=W_{n}\left(\varphi(x),\varphi(y)\right)
 \]
 Further, sending $y\to x$ in the last identity, one arrives at the formula
 \begin{equation}
  \sum_{k=-\infty}^{\infty}\varphi_{k}^{2}(x)=\lim_{n\to\infty}W_{n}\left(\varphi'(x),\varphi(x)\right), \quad x\neq0.
 \label{eq:wronsk_to_norm}
 \end{equation}

 Thus, the evaluation of the sum of squares of $\varphi_{n}(x)$ is a matter of computation of the limit on the RHS of~\eqref{eq:wronsk_to_norm}. 
 By using formulas \eqref{eq:phi_def}, \eqref{eq:psipm_def} and \eqref{eq:varphi_2nd_expr}, it is not hard to see the convergence of the series on the LHS 
 of~\eqref{eq:wronsk_to_norm} is local uniform in $x\in\C\setminus\{0\}$. Consequently, the LHS of~\eqref{eq:wronsk_to_norm} is
 analytic on $\C\setminus\{0\}$. Since the function on the RHS of \eqref{eq:norm_varphi} is analytic in $x$ on $\C\setminus\{0\}$ as well, we may restrict 
 ourself with real $x$ and verify \eqref{eq:norm_varphi} for $x\in\R\setminus\{0\}$ only.	
 
 Let $x\in\R\setminus\{0\}$ be fixed. We temporarily denote $a:=\theta_{q}(-\ii q^{-1/2}x)$ and $\psi:=\psi^{(-)}$ Then, by \eqref{eq:phi_def}, 
 $\varphi=a\psi+\bar{a}\bar{\psi}$, where the bar denotes the complex conjugation.
 One has
 \[
  W_{n}\left(\varphi',\varphi\right)=a'aW_{n}\left(\psi,\psi\right)+a'\bar{a}W_{n}\left(\psi,\bar{\psi}\right)
  +a^{2}W_{n}\left(\psi',\psi\right)+a\bar{a}W_{n}\left(\psi',\bar{\psi}\right)+ \mbox{c.c.}
 \]
 where the abbreviation c.c. stands for the term which equals the complex conjugation to the previous four terms.
 Clearly, $W_{n}\left(\psi,\psi\right)=0$. Next, a straightforward computation shows
 \[
  \lim_{n\to\infty}W_{n}\left(\psi',\psi\right)=\lim_{n\to\infty}W_{n}\left(\psi',\bar{\psi}\right)=0.
 \]
 Recall at last that $W_{n}\left(\psi,\bar{\psi}\right)=-2\ii q^{1/2}$ as it is shown in the proof of Proposition \ref{prop:sol_qExp}. Hence, according to \eqref{eq:wronsk_to_norm}, one gets
 \[
  \sum_{k=-\infty}^{\infty}\varphi_{k}^{2}(x)=2\ii q^{1/2}\left(a(x)\bar{a}'(x)-a'(x)\bar{a}(x)\right)=
  -2\left(\theta_{q}\left(-\ii q^{-1/2}x\right)\theta_{q}'\left(\ii q^{-1/2}x\right)+\mbox{c.c.}\right)\!.
 \]
 Finally, it suffices to use the identity
 \[
  \theta_{q}'(z)\theta_{q}(-z)+\theta_{q}'(-z)\theta_{q}(z)=-2\theta_{q}(z)\theta_{q}(-z)\sum_{k\in\mathbb{Z}}\frac{q^{k}}{1-z^{2}q^{2k}},
 \]
 which holds true for all $z\neq0$, to arrive at the identity 
 \[
  \sum_{k=-\infty}^{\infty}\varphi_{k}^{2}(x)=4\theta_{q}\left(-\ii q^{-1/2}x\right)\theta_{q}\left(\ii q^{-1/2}x\right)\sum_{k\in\mathbb{Z}}\frac{q^{k}}{1+x^{2}q^{2k-1}}
  =4\theta_{q^{2}}\left(-q^{-1}x^{2}\right)\xi_{q^{2}}\left(x^{2}\right)
 \]
 where $\xi_{q}$ is defined in \eqref{eq:def_xi}. Now, it suffices to apply \eqref{eq:xi_eq_ratio_theta} to conclude the proof.
\end{proof}

\subsection{Self-adjoint extensions}

Recall that the symmetric operator $A$ decomposes as in \eqref{eq:A_decomp} and deficiency indices of $A_{+}$ are $(1,1)$, while deficiency indices of $A_{-}$ are $(0,0)$.
In another terminology, the difference expression associated with the operator $A$ is
in the limit circle case at $+\infty$ and in the limit point case at $-\infty$. Consequently, whether a sequence $\psi\in\ell^{2}(\mathbb{Z})$ belongs to the domain
of a self-adjoint extension of $A$, depends on the asymptotic behavior of $\psi$ at $+\infty$. 
Below, we provide the description of all self-adjoint extensions of $A$ in terms of boundary condition imposed on the respective sequences at $+\infty$. We use the advantage that,
in the case of $A$, we can restrict ourself to $A_{+}$ since, clearly, the set of all self-adjoint extensions of $A$ is in one-to-one correspondence with the set of all 
self-adjoint extensions of~$A_{+}$. 

Denote by $(A_{+})_{\max}$ the maximal domain Jacobi operator associated with $A_{+}$, hence
\[
 D_{+}:=\Dom(A_{+})_{\max}=\{\psi\in\ell^{2}(\Z_{+})\mid A_{+}\psi\in\ell^{2}(\Z_{+})\},
\]
and similarly $A_{\max}$ and $D$ in the case of $A$. It is known that all the self-adjoint extensions of $A_{+}$ can be parametrized by $t\in\mathbb{R}\cup\{\infty\}$
and their domains can be expressed as
\[
 \Dom(A_{+})_{t}=\{\psi\in\Dom(A_{+})_{\max}\mid \lim_{n\to\infty}(W_{n}(p,\psi)-tW_{n}(q,\psi))=0\},
\]
for $t\in\R$, and
\[
 \Dom(A_{+})_{\infty}=\{\psi\in\Dom(A_{+})_{\max}\mid \lim_{n\to\infty} W_{n}(q,\psi)=0\},
\]
where $p,q\in\ell^{2}(\Z_{+})$ can be chosen as the solutions of the equation
\[
 q^{-n+1}\psi_{n-1}+q^{-n}\psi_{n+1}=0, \quad  n\in\N,
\]
determined by the initial conditions $p_{0}=1$, $p_{1}=0$ and $q_{0}=0$, $q_{1}=1$, see, for example, \cite[Sec.~2.6]{Teschl}.
In our case, the entries of $p$ and $q$ are as in \eqref{eq:p_q_def}.

\begin{prop}\label{prop:dom_A_t}
 For $t\in\R\cup\{\infty\}$, operators $A_{t}\subset A_{\max}$ with domains
 \[
  \Dom A_{t}=\left\{\psi\in D \mid \lim_{n\to\infty}q^{-n}\left(\psi_{2n+1}+t\psi_{2n}\right)=0 \ \wedge \
  \lim_{n\to\infty}q^{-n}\left(q\psi_{2n-1}-t\psi_{2n}\right)=0\right\}\!,
 \]
 if $t\in\R$, or
 \[
  \Dom A_{\infty}=\left\{\psi\in D \mid \lim_{n\to\infty}q^{-n}\psi_{2n}=0\right\}\!,
 \]
 are all self-adjoint extensions of $A$.
\end{prop}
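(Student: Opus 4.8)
The plan is to reduce everything to the explicit boundary conditions for $A_+$ recalled just above and then translate them by a direct evaluation of the two Wronskians $W_n(p,\psi)$ and $W_n(q,\psi)$. Since $A=A_-\oplus0+0\oplus A_+$ with $A_-$ bounded (hence self-adjoint), the self-adjoint extensions $A_t$ of $A$ are in one-to-one correspondence with the extensions $(A_+)_t$ of $A_+$, and because the difference expression is in the limit point case at $-\infty$, no boundary condition is needed there. Consequently $\psi\in D$ belongs to $\Dom A_t$ precisely when it satisfies the boundary condition at $+\infty$, namely $\lim_{n\to\infty}\left(W_n(p,\psi)-tW_n(q,\psi)\right)=0$ for $t\in\R$ and $\lim_{n\to\infty}W_n(q,\psi)=0$ for $t=\infty$. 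The requirement $\psi\in D$ is a standing hypothesis, so the only thing to prove is that these abstract limits coincide with the explicit conditions in the statement.

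First I would insert the entries \eqref{eq:p_q_def} into $W_n(u,\psi)=q^{-n}(u_{n+1}\psi_n-u_n\psi_{n+1})$. Since $p$ is supported on the even sublattice and $q$ on the odd one, for each $n$ exactly one term survives in each Wronskian, and a short computation gives
\[
 W_{2m}(p,\psi)=W_{2m+1}(p,\psi)=-(-1)^m q^{-m}\psi_{2m+1},
\]
\[
 W_{2m-1}(q,\psi)=W_{2m}(q,\psi)=(-1)^m q^{-m}\psi_{2m}.
\]
Thus $W_n(p,\psi)$ is constant across each pair $\{2m,2m+1\}$ while $W_n(q,\psi)$ is constant across the offset pairs $\{2m-1,2m\}$, the only growth being the factor $q^{-m}$ together with the harmless sign $(-1)^m$.

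The key observation is that a sequence indexed by $n\to\infty$ tends to $0$ if and only if both its even- and odd-indexed subsequences do, and here the two parities decouple precisely because $p$ and $q$ are supported on complementary sublattices, so a single limit encodes two independent conditions. For $t\in\R$, evaluating $W_n(p,\psi)-tW_n(q,\psi)$ separately on the two parities yields
\[
 W_{2m}(p,\psi)-tW_{2m}(q,\psi)=-(-1)^m q^{-m}\left(\psi_{2m+1}+t\psi_{2m}\right),
\]
\[
 W_{2m-1}(p,\psi)-tW_{2m-1}(q,\psi)=(-1)^m q^{-m}\left(q\psi_{2m-1}-t\psi_{2m}\right),
\]
and, discarding the irrelevant sign, the condition $\lim_n(W_n(p,\psi)-tW_n(q,\psi))=0$ is equivalent to the two limits displayed in the statement. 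For $t=\infty$ the single condition $\lim_n W_n(q,\psi)=0$ collapses, by the formula above, to $\lim_{m\to\infty}q^{-m}\psi_{2m}=0$, which is exactly the stated condition for $A_\infty$.

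I expect the main obstacle to be not any deep analysis but the careful bookkeeping of this parity decomposition: one must track which of the two indices survives in each Wronskian, keep the powers $q^{-m}$ and signs $(-1)^m$ straight, and recognise that the offset between the pairs $\{2m,2m+1\}$ and $\{2m-1,2m\}$ is what forces a \emph{pair} of boundary conditions out of one limit. Once this decoupling is made explicit, matching the resulting expressions with the formulas in the statement is immediate.
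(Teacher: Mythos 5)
Your proposal is correct and follows essentially the same route as the paper: it invokes the standard parametrization of the self-adjoint extensions of $A_{+}$ via the limits of $W_{n}(p,\psi)-tW_{n}(q,\psi)$, then evaluates the Wronskians using \eqref{eq:p_q_def} to obtain exactly the paper's identities $W_{2n}(p,\psi)-tW_{2n}(q,\psi)=(-1)^{n+1}q^{-n}\left(\psi_{2n+1}+t\psi_{2n}\right)$ and $W_{2n-1}(p,\psi)-tW_{2n-1}(q,\psi)=(-1)^{n}q^{-n}\left(q\psi_{2n-1}-t\psi_{2n}\right)$, discarding the superfluous alternating signs. Your explicit parity bookkeeping (including the observation that $W_{2m-1}(q,\psi)=W_{2m}(q,\psi)$, which is why $t=\infty$ yields a single condition) is a slightly more detailed rendering of what the paper leaves as ``one readily verifies,'' and all the computations check out.
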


\begin{proof}
 According to the above introduction, it suffices to evaluate the Wronskians $W_{n}(p,\psi)$ and $W_{n}(q,\psi)$. 
 By using \eqref{eq:p_q_def}, one readily verifies that
 \[
  W_{2n}(p,\psi)-tW_{2n}(q,\psi)=(-1)^{n+1}q^{-n}\left(\psi_{2n+1}+t\psi_{2n}\right)
 \]
 and
 \[
  W_{2n-1}(p,\psi)-tW_{2n-1}(q,\psi)=(-1)^{n}q^{-n}\left(q\psi_{2n-1}-t\psi_{2n}\right)\!.
 \]
 Clearly, concerning the limit of the above expressions for $n\to\infty$, the alternating factors
 are superfluous and the statement follows.
\end{proof}

\subsection{The point spectrum of $A_{t}$}

We derive a secular equation whose zeros determine the point spectrum of a particular self-adjoint extension of $A$.

\begin{thm}\label{thm:secular_eq}
 For $t\in\R\cup\{\infty\}$, $\spec_{p}(A_t)$ coincides with the set of roots of the secular equation:
 \[
  x\theta_{q^{4}}\left(q^{2}x^{2}\right)+t\theta_{q^{4}}\left(x^{2}\right)=0, \mbox{ for } t\in\R, \; \mbox{ and } \; \theta_{q^{4}}\left(x^{2}\right)=0,  \mbox{ for } t=\infty.
 \]
 In addition, all eigenvalues of $A_t$ are simple.
\end{thm}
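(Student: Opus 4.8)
The plan is to reduce the computation of $\spec_p(A_t)$ to a boundary-condition test on the distinguished square-summable solution $\varphi(x)$ and then to recast that test in terms of theta functions. Since $0\notin\spec_p(A_t)$ by Proposition~\ref{prop:spec_A_c_eq_0}, I restrict attention to $x\neq0$. For such $x$, Proposition~\ref{prop:l2_sol} tells me that $\varphi(x)$ spans the one-dimensional space of $\ell^2(\Z)$ solutions of~\eqref{eq:eigenval_diff_eq_A}. Consequently, a nonzero $x$ is an eigenvalue of $A_t$ exactly when $\varphi(x)\in\Dom A_t$; as $\varphi(x)\in\ell^2(\Z)$ and $A\varphi(x)=x\varphi(x)\in\ell^2(\Z)$, membership in $\Dom A_t$ is equivalent to $\varphi(x)$ satisfying the boundary conditions of Proposition~\ref{prop:dom_A_t} at $+\infty$ (the behaviour at $-\infty$ being immaterial by the limit-point property). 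The same one-dimensionality shows at once that each eigenvalue is simple, so that assertion requires no separate argument; moreover, reading the resulting equivalence backwards shows that every root of the secular equation produces an eigenvector of the self-adjoint operator $A_t$ and is therefore automatically real, which yields the full set equality.

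Next I would determine the asymptotics of $\varphi_n(x)$ as $n\to+\infty$. Because ${}_1\phi_1(0;-q;q,w)\to1$ as $w\to0$, formula~\eqref{eq:psipm_def} gives $\psi_n^{\pm}(x)=(\pm\ii)^n q^{n/2}\bigl(1+O(q^n)\bigr)$, whence \eqref{eq:phi_def} yields $\varphi_n(x)=q^{n/2}\bigl(c_+\ii^n+c_-(-\ii)^n\bigr)+O(q^{3n/2})$, where I abbreviate $c_\pm:=\theta_q(\pm\ii q^{-1/2}x)$. Splitting into even and odd indices and inserting these expansions into the three quantities $q^{-n}\varphi_{2n}$ and $q^{-n}\varphi_{2n\pm1}$ appearing in Proposition~\ref{prop:dom_A_t}, the $O(q^{3n/2})$ corrections contribute $O(q^{2n})$ and drop out in the limit. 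The decisive point is that the surviving leading terms all carry an oscillating factor $(-1)^n$, so the required limits exist and vanish precisely when the accompanying coefficients vanish. In this way both conditions for $t\in\R$ collapse to the single equation $\ii q^{1/2}(c_+-c_-)+t(c_-+c_+)=0$, while $t=\infty$ gives simply $c_-+c_+=0$.

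It then remains to identify these combinations of $c_\pm$ with the stated theta expressions. Substituting the Jacobi triple product form~\eqref{eqdeftheta} into $c_\pm$ and using that $(-\ii)^m+\ii^m$ annihilates odd $m$ whereas $(-\ii)^m-\ii^m$ annihilates even $m$, the even part resums, after the exponent identity $\binom{2j}{2}-j=4\binom{j}{2}$, to $c_++c_-=\tfrac{2(q^4;q^4)_\infty}{(q;q)_\infty}\theta_{q^4}(x^2)$, and the odd part resums, after $\binom{2j+1}{2}-\tfrac{2j+1}{2}=2j^2-\tfrac12$, to $\ii q^{1/2}(c_+-c_-)=\tfrac{2(q^4;q^4)_\infty}{(q;q)_\infty}\,x\,\theta_{q^4}(q^2x^2)$. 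Dividing the secular conditions of the previous paragraph by the common nonzero prefactor $\tfrac{2(q^4;q^4)_\infty}{(q;q)_\infty}$ then turns them into $x\theta_{q^4}(q^2x^2)+t\theta_{q^4}(x^2)=0$ for $t\in\R$ and $\theta_{q^4}(x^2)=0$ for $t=\infty$, as claimed.

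The main obstacle is the bookkeeping of this last step: one must carry out the quadratic dissection of the two theta series of nome $q$ into series of nome $q^4$, correctly track the half-integer powers of $q$ introduced by the arguments $\pm\ii q^{-1/2}x$, and check that the two resulting prefactors coincide so that $t$ enters the secular equation linearly. By comparison the asymptotic step is routine, provided one keeps in mind that it is the vanishing of the coefficient of $(-1)^n$, rather than the mere convergence of the sequence, that the boundary condition enforces.
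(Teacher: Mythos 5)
Your proposal is correct and follows essentially the same route as the paper: reduce the eigenvalue problem for $A_t$ to the boundary conditions of Proposition~\ref{prop:dom_A_t} applied to the unique $\ell^{2}$ solution $\varphi(x)$ of Proposition~\ref{prop:l2_sol}, extract the leading asymptotics of $\varphi_n$ at $+\infty$ (where ${}_{1}\phi_{1}\to1$), and convert the resulting linear condition in $c_{\pm}=\theta_q(\pm\ii q^{-1/2}x)$ into the stated form via the quadratic dissection of the nome-$q$ theta series into nome-$q^{4}$ theta functions, with simplicity following from the one-dimensionality of the $\ell^{2}$ solution space. Your dissection identities and the collapse of both boundary conditions to the single equation $\ii q^{1/2}(c_+-c_-)+t(c_++c_-)=0$ check out exactly against the paper's $\Re$/$\Im$ computation (your version, working with $c_\pm$ directly, even handles complex $x$ and thus the realness of all roots slightly more explicitly than the paper does).
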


\begin{proof}
 First, note that $\spec_{p}(A_t)\subset\R\setminus\{0\}$ as it follows from the self-adjointness of $A_{t}$ and Proposition \ref{prop:spec_A_c_eq_0}.
 Next, according to Proposition \ref{prop:l2_sol}, $0\neq x\in\spec_{p}(A_t)$ if and only if the vector $\varphi(x)$, defined by \eqref{eq:phi_def}, 
 belongs to the domain of $A_t$. Taking into account Proposition \ref{prop:dom_A_t}, we find that
 $x\in\spec_{p}(A_{t})$ iff
 \[
  \lim_{n\to\infty}q^{-n}\left(\varphi_{2n+1}(x)+t\varphi_{2n}(x)\right)=0 \quad \mbox{ and }\quad
  \lim_{n\to\infty}q^{-n}\left(q\varphi_{2n-1}(x)-t\varphi_{2n}(x)\right)=0,
 \]
 for $t\in\R$ and similarly for $t=\infty$. By substituting for $\varphi(x)$ by \eqref{eq:phi_def} and \eqref{eq:psipm_def}, 
 the first boundary condition is equivalent to
 \[
 \lim_{n\to\infty}q^{1/2}\Im\eta_{2n+1}(x)+t\Re\eta_{2n}(x)=0
 \]
 where
 \[
  \eta_{n}(x):=\theta_{q}\left(-\ii q^{-1/2}x\right){}_{1}\phi_{1}\!\left(0;-q;q,\ii x q^{n+1/2}\right)\!.
 \]
 Since ${}_{1}\phi_{1}\!\left(0;-q;q,\ii x q^{n+1/2}\right)=1+o(1)$, as $n\rightarrow\infty$, we end up with the secular equation
 of the form
 \begin{equation}
  q^{1/2}\Im\theta_{q}\left(-\ii q^{-1/2}x\right)+t\Re\theta_{q}\left(-\ii q^{-1/2}x\right)=0.
 \label{eq:secular_eq_theta}
 \end{equation}
 The second boundary condition turns out to be equivalent to the same equation.
 
 By using the Jacobi triple product identity \eqref{eqdeftheta}, one deduces
 \[
  \Re\theta_{q}\left(-\ii q^{-1/2}x\right)=\frac{1}{(q;q)_{\infty}}\sum_{n\in\Z}q^{2n(n-1)}(-1)^{n}x^{2n}=\frac{(q^{4};q^{4})_{\infty}}{(q;q)_{\infty}}\theta_{q^{4}}\left(x^{2}\right)
 \]
 and
 \[
  \Im\theta_{q}\left(-\ii q^{-1/2}x\right)=\frac{xq^{-1/2}}{(q;q)_{\infty}}\sum_{n\in\Z}q^{2n^{2}}(-1)^{n}x^{2n}=xq^{-1/2}\frac{(q^{4};q^{4})_{\infty}}{(q;q)_{\infty}}\theta_{q^{4}}\left(q^{2}x^{2}\right)\!.
 \]
 By plugging the last formulas into \eqref{eq:secular_eq_theta}, one arrives at the secular equation from the statement for $t\in\R$.
 The secular equation for $t=\infty$ is to be derived in a completely analogous way.
 
 Finally, the simplicity of eigenvalues is a consequence of the part of Proposition~\ref{prop:l2_sol} 
 concerning uniqueness.
\end{proof}

Note that in two particular cases, namely when $t=0$ and $t=\infty$, roots of the secular equation can be found
fully explicitly. Thus, we have the following corollary.

\begin{cor}\label{cor:spec_point_0_inf}
 It holds
 \[
  \spec_{p}(A_0)=\left\{\pm q^{2n+1} \mid n\in\Z \right\}
 \quad
 \mbox{ and }
 \quad
  \spec_{p}(A_\infty)=\left\{\pm q^{2n} \mid n\in\Z \right\}.
 \]
\end{cor}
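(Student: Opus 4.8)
The plan is to specialize the secular equations from Theorem~\ref{thm:secular_eq} to $t=0$ and $t=\infty$ and then read off the zeros directly from the theta function. For $t=\infty$ the secular equation is $\theta_{q^{4}}(x^{2})=0$, so I first recall the zero set of $\theta_{q}$. From the Jacobi triple product representation \eqref{eqdeftheta}, namely $\theta_{q}(x)=(x,qx^{-1};q)_{\infty}$, the zeros of $y\mapsto\theta_{q}(y)$ are exactly the points where one of the infinite products vanishes, i.e.\ $y=q^{-k}$ for $k\in\Z_{+}$ (from the factor $(y;q)_{\infty}$) together with $y=q^{k+1}$ for $k\in\Z_{+}$ (from $(qy^{-1};q)_{\infty}$). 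Combining these, the zero set of $\theta_{q}$ is precisely $\{q^{k}\mid k\in\Z\}$, and each zero is simple.

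Granting this, for $t=\infty$ I would set $y=x^{2}$ and base $q^{4}$, so that $\theta_{q^{4}}(x^{2})=0$ forces $x^{2}=q^{4k}$ for some $k\in\Z$, hence $x=\pm q^{2k}$. Since by Proposition~\ref{prop:spec_A_c_eq_0} we have $0\notin\spec_{p}(A_{\infty})$ and these are the only solutions, this gives $\spec_{p}(A_{\infty})=\{\pm q^{2n}\mid n\in\Z\}$ exactly. For $t=0$ the secular equation reduces to $x\,\theta_{q^{4}}(q^{2}x^{2})=0$; since $x\neq0$ on the point spectrum, this is equivalent to $\theta_{q^{4}}(q^{2}x^{2})=0$, which by the same zero-set computation means $q^{2}x^{2}=q^{4k}$, i.e.\ $x^{2}=q^{4k-2}=q^{2(2k-1)}$, so $x=\pm q^{2k-1}$. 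Letting $k$ range over $\Z$ yields $\spec_{p}(A_{0})=\{\pm q^{2n+1}\mid n\in\Z\}$.

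The only genuine content is the identification of the zero set of $\theta_{q}$; everything after that is a one-line substitution. I do not expect this to be a serious obstacle, since it is a standard fact following immediately from the product form in \eqref{eqdeftheta} and the elementary observation that $(a;q)_{\infty}=0$ iff $a\in\{q^{-j}\mid j\in\Z_{+}\}$. One small point worth noting in writing up is that the two families of zeros coming from the two product factors overlap consistently with the quasi-periodicity \eqref{eq:theta_power_q_id}, so no zero is double-counted and none is missed; alternatively one may invoke \eqref{eq:theta_power_q_id} directly to reduce any $x$ to the fundamental annulus and check vanishing there. I would present the $t=\infty$ case first as the cleaner one, then the $t=0$ case, each as a two-line specialization of the secular equation.
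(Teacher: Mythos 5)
Your proposal is correct and follows exactly the route the paper intends: the corollary is stated as an immediate specialization of Theorem~\ref{thm:secular_eq}, with the zero set $\theta_{q}^{-1}(0)=q^{\mathbb{Z}}$ read off from the product form \eqref{eqdeftheta}, which is precisely your argument. Your identification of the zeros, the exclusion of $x=0$, and the substitutions $x^{2}=q^{4k}$ and $q^{2}x^{2}=q^{4k}$ are all accurate, so nothing is missing.
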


\subsection{Reparametrization and the spectrum fully explicitly}

It turns out that if a suitable reparametrization $t=\Phi(s)$ is applied, where $t$ comes from the parametrization used for the self-adjoint 
extensions $A_t$, the spectrum of $A_{\Phi(s)}$ can be expressed in a full explicit form in terms of the new parameter $s$. The crucial role here 
is played by the four theta functions:
\begin{equation}
\begin{aligned}
 \vartheta_{1}(z\mid q)&=\ii q^{1/4}e^{-\ii z}\left(q^{2};q^{2}\right)_{\!\infty}\theta_{q^{2}}\left(e^{2\ii z}\right)\!,\\
 \vartheta_{2}(z\mid q)&= q^{1/4}e^{-\ii z}\left(q^{2};q^{2}\right)_{\!\infty}\theta_{q^{2}}\left(-e^{2\ii z}\right)\!,\\
 \vartheta_{3}(z\mid q)&=\left(q^{2};q^{2}\right)_{\!\infty}\theta_{q^{2}}\left(-qe^{2\ii z}\right)\!,\\
 \vartheta_{4}(z\mid q)&=\left(q^{2};q^{2}\right)_{\!\infty}\theta_{q^{2}}\left(qe^{2\ii z}\right)\!;\\
\end{aligned}
\label{eq:def_theta1234}
\end{equation}
the closely related Jacobian elliptic functions and elliptic integrals. As a reference concerning these special functions, we primarily use the monograph \cite{Lawden}.

\begin{lem}
 The function
 \begin{equation}
  \Phi(s):=\ii q^{1/2}\frac{\vartheta_{4}\left(\ii s \mid q^{2}\right)}{\vartheta_{1}\left(\ii s \mid q^{2}\right)}
 \label{eq:def_Phi}
 \end{equation}
 is real-valued, strictly decreasing on $(0,-2\ln q)$, and maps $[0,-2\ln q)$ onto $\mathbb{R}\cup\{\infty\}$, with convention
 $\Phi(0)=\infty$. For the inverse function, it holds
 \begin{equation}
  \Phi^{-1}(t)=\frac{q^{1/2}}{\vartheta_{2}\left(0 \mid q^{2}\right)\vartheta_{3}\left(0 \mid q^{2}\right)}
  \int_{t}^{\infty}\left[\left(\frac{q\vartheta_{3}^{2}\left(0 \mid q^{2}\right)}{\vartheta_{2}^{2}\left(0 \mid q^{2}\right)}+x^{2}\right)
  \left(q+x^{2}\right)\right]^{-1/2}\!\!\!\dd x
 \label{eq:Phi_inverse}
 \end{equation}
 where, for $t=\infty$, the integral has to be understood as $0$.
\end{lem}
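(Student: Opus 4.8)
The plan is to recognise $\Phi$ as a real multiple of a single Jacobian elliptic function and then read off all four assertions from the classical theory of these functions. Throughout, all theta constants are taken with nome $q^2$; write $k=\vartheta_2^2(0\mid q^2)/\vartheta_3^2(0\mid q^2)$ for the modulus and $k'$ for the complementary one. Using the standard theta dictionary $\operatorname{ns}(u,k)=\frac{\vartheta_2(0)}{\vartheta_3(0)}\frac{\vartheta_4(v)}{\vartheta_1(v)}$ with $v=u/\vartheta_3^2(0)$, the argument $v=\ii s$ in \eqref{eq:def_Phi} corresponds to $u=\ii\vartheta_3^2(0)s$, so that $\Phi(s)=\ii q^{1/2}\frac{\vartheta_3(0)}{\vartheta_2(0)}\operatorname{ns}\!\bigl(\ii\vartheta_3^2(0)s,k\bigr)$. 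Jacobi's imaginary-argument transformation $\sn(\ii u,k)=\ii\,\sn(u,k')/\cn(u,k')$ gives $\operatorname{ns}(\ii u,k)=-\ii\,\cs(u,k')$, and the two factors of $\ii$ cancel, leaving
\[
 \Phi(s)=q^{1/2}\frac{\vartheta_3(0)}{\vartheta_2(0)}\,\cs\!\bigl(\vartheta_3^2(0)s,\;k'\bigr),
\]
which is manifestly real-valued. This disposes of the first claim.

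For monotonicity and range I would use that $\cs(\cdot,k')$ has a simple pole at $0$ with $\cs(U,k')\sim 1/U$, a simple zero at its quarter period $K'=K(k')$, a pole at $2K'$, and derivative $-\operatorname{dn}/\sn^2<0$ on all of $(0,2K')$; hence it decreases strictly from $+\infty$ to $-\infty$ there, passing through $0$ at $U=K'$. The numerology is pinned down by $K=\tfrac{\pi}{2}\vartheta_3^2(0)$ together with the nome relation $q^2=e^{-\pi K'/K}$: these give $\vartheta_3^2(0)\cdot(-2\ln q)=\tfrac{2K}{\pi}\cdot\tfrac{\pi K'}{K}=2K'$, so as $s$ runs through $[0,-2\ln q)$ the argument $\vartheta_3^2(0)s$ runs through exactly one period $[0,2K')$. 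Since $q^{1/2}\vartheta_3(0)/\vartheta_2(0)>0$, this yields at once the strict decrease on $(0,-2\ln q)$, the convention value $\Phi(0)=\infty$, the zero at the midpoint $s=-\ln q$, and surjectivity onto $\R\cup\{\infty\}$.

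For the inverse I would avoid inverting $\cs$ term by term and instead derive a first-order autonomous ODE. From the algebraic identity $(\cs')^2=(\cs^2+1)(\cs^2+k^2)$ (the complementary modulus of $k'$ being $k$) and the chain rule one gets $\bigl(\Phi'(s)\bigr)^2=C\,(\Phi^2+a^2)(\Phi^2+b^2)$, where $C,a,b$ are explicit combinations of $q$ and the theta-nulls $\vartheta_2(0),\vartheta_3(0)$ obtained by substituting $\cs=\Phi/\bigl(q^{1/2}\vartheta_3(0)/\vartheta_2(0)\bigr)$. Because $\Phi$ is strictly decreasing, separating variables and integrating from the value $s$ at which $\Phi=t$ up to $s=0$ (where $\Phi=+\infty$) gives $s=\int_t^\infty\bigl[C\,(x^2+a^2)(x^2+b^2)\bigr]^{-1/2}\dd x$. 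Evaluating the constants with $k=\vartheta_2^2(0)/\vartheta_3^2(0)$ and $\tfrac{2K}{\pi}=\vartheta_3^2(0)$ collapses the prefactor to $\tfrac{q^{1/2}}{\vartheta_2(0)\vartheta_3(0)}$ and identifies the quartic's roots with the theta-null combinations displayed in \eqref{eq:Phi_inverse}; the stipulation that the integral is $0$ for $t=\infty$ is precisely $\Phi^{-1}(\infty)=0$.

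The main obstacle is the constant bookkeeping, not the analysis: one must keep straight which modulus ($k$ or $k'$) governs the elliptic function produced by the imaginary transformation, track the accompanying swap $K\leftrightarrow K'$ of the quarter periods, and compute the exact coefficients $C,a,b$ of the quartic under the square root. The three monotonicity and surjectivity statements, as well as the separation of variables, are routine once the identification $\Phi(s)=q^{1/2}\frac{\vartheta_3(0)}{\vartheta_2(0)}\cs(\vartheta_3^2(0)s,k')$ is established; verifying that the roots of the quartic match the theta-null expressions in \eqref{eq:Phi_inverse} is the delicate computational point and is worth cross-checking, for instance by differentiating \eqref{eq:Phi_inverse} and comparing with the ODE above.
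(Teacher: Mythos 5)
Your proposal follows the paper's own proof almost line for line: the paper likewise rewrites
\[
 \Phi(s)=q^{1/2}\,\frac{\vartheta_{3}\left(0\mid q^{2}\right)}{\vartheta_{2}\left(0\mid q^{2}\right)}\,\cs\!\left(s\,\vartheta_{3}^{2}\left(0\mid q^{2}\right),k'(q^{2})\right)
\]
via the theta dictionary and Jacobi's imaginary transformation, obtains real-valuedness, strict decrease and the range from the behaviour of $\cs$ on one period $\left(0,2K(k')\right)$ together with $K(k'(q^{2}))=-\ln q\;\vartheta_{3}^{2}\left(0\mid q^{2}\right)$ (your nome computation $\vartheta_{3}^{2}(0)\cdot(-2\ln q)=2K'$ is the same fact), and then inverts. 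The only cosmetic difference is that the paper quotes the tabulated inversion integral $\cs^{-1}(v\mid k')=\int_{v}^{\infty}\left[(1+t^{2})(k^{2}+t^{2})\right]^{-1/2}\dd t$ from Lawden, Eq.~3.2.15, while you re-derive it by separating the ODE $(\cs')^{2}=(1+\cs^{2})(k^{2}+\cs^{2})$; these are the same computation.

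However, the one step you deferred --- evaluating the constants $C,a,b$ and matching them against the display --- is precisely where your proof cannot be completed \emph{as stated}, and the cross-check you yourself proposed exposes why. With $c:=q^{1/2}\vartheta_{3}/\vartheta_{2}$, $\lambda:=\vartheta_{3}^{2}$ and $k=\vartheta_{2}^{2}/\vartheta_{3}^{2}$ (all theta-nulls at nome $q^{2}$), the chain rule gives $(\Phi')^{2}=\frac{\lambda^{2}}{c^{2}}\left(\Phi^{2}+c^{2}\right)\left(\Phi^{2}+k^{2}c^{2}\right)$, so separation of variables yields the prefactor $c/\lambda=q^{1/2}/(\vartheta_{2}\vartheta_{3})$ and the first root $a^{2}=c^{2}=q\vartheta_{3}^{2}/\vartheta_{2}^{2}$, both matching \eqref{eq:Phi_inverse} --- but the second root is $b^{2}=k^{2}c^{2}=q\,\vartheta_{2}^{2}\left(0\mid q^{2}\right)/\vartheta_{3}^{2}\left(0\mid q^{2}\right)$, which equals the displayed $q$ only if $\vartheta_{2}(0\mid q^{2})=\vartheta_{3}(0\mid q^{2})$, i.e.\ never. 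The value $t=0$ decides the matter: since $\Phi$ first vanishes at $s=-\ln q$ (the first zero of $\vartheta_{4}(\ii s\mid q^{2})$ on the relevant interval, equivalently $\lambda s=K(k')$), one must have $\Phi^{-1}(0)=-\ln q=K(k')/\vartheta_{3}^{2}$; the integral with $b^{2}=k^{2}c^{2}$ gives exactly $\frac{1}{\vartheta_{3}^{2}}K\left(\sqrt{1-k^{2}}\right)=\frac{1}{\vartheta_{3}^{2}}K(k')$, whereas the printed integral gives $\frac{1}{\vartheta_{3}^{2}}K\left(\sqrt{1-k}\right)$, which is strictly smaller (numerically, for $q=0.01$ one gets about $3.0$ against the true $-\ln q\approx 4.6$). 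So your method is sound and, executed honestly, proves \eqref{eq:Phi_inverse} with second factor $\frac{q\,\vartheta_{2}^{2}\left(0\mid q^{2}\right)}{\vartheta_{3}^{2}\left(0\mid q^{2}\right)}+x^{2}$ in place of $q+x^{2}$; the identification with the display that you asserted without computing is false, apparently because the printed formula itself carries a misprint in the final ``routine'' substitution (the remark following the lemma inherits the same factor). Since only the real-valuedness, monotonicity and range assertions are invoked in the paper's subsequent theorem, nothing downstream is affected.
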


\begin{rem}
 The formula \eqref{eq:Phi_inverse} can be also written in the form without theta functions:
 \begin{equation*}
  \Phi^{-1}(t)=\frac{\left(q^{2};q^{2}\right)_{\infty}^{2}}{2\left(q^{4};q^{4}\right)_{\infty}^{4}}
  \int_{t}^{\infty}\left[\left(\frac{\left(-q^{2};q^{4}\right)_{\infty}^{4}}{4\left(-q^{4};q^{4}\right)_{\infty}^{4}}+x^{2}\right)
  \left(q+x^{2}\right)\right]^{-1/2}\!\!\!\dd x.
 \end{equation*}
\end{rem}

\begin{proof}
 It turns out that the function $\Phi$ can be expressed as a Jacobian elliptic function. Indeed, using definitions
 \cite[Eqs. 2.1.1, 2.1.21]{Lawden} and the Jacobi's imaginary transformation \cite[Eq. 2.6.12]{Lawden}, we arrive at the formula
 \[
  \Phi(s)=q^{1/2}\frac{\vartheta_{3}\left(0 \mid q^{2}\right)}{\vartheta_{2}\left(0 \mid q^{2}\right)}
  \cs\left(s\vartheta_{3}^{2}\left(0 \mid q^{2}\right), k'(q^{2})\right)
 \]
 where
 \[
  k'(q^{2})=\frac{\vartheta_{4}^{2}\left(0 \mid q^{2}\right)}{\vartheta_{3}^{2}\left(0 \mid q^{2}\right)},
 \]
 see \cite[Eq. 2.1.13]{Lawden}. Recall that $\cs(u \mid k')=\cn(u \mid k')/\sn(u \mid k')$ and $\cs(\cdot\mid k')$ is decreasing function
 on $\left(0,2K(k')\right)$ which maps the interval $\left(0,2K(k')\right)$ onto $\mathbb{R}$, where \cite[Eq. 2.2.3]{Lawden}
 \[
  K\left(k'(q^{2})\right)=-\ln(q)\, \vartheta_{3}^{2}\left(0 \mid q^{2}\right)\!,
 \]
 see \cite[Chp.~2]{Lawden} or \cite[Chp.~16]{AbramowitzStegun}. If, in addition, we set $\cs(0\mid k'):=\infty$,
 we verify the first part of the statement.
 
 The inverse to $\cs(\cdot\mid k')$ has the form of an elliptic integral, namely
 \[
  \cs^{-1}(v\mid k')=\int_{v}^{\infty}\frac{\dd t}{\sqrt{(1+t^{2})(k^{2}+t^{2})}}, \quad v\in\R,
 \]
 where
 \[
  k(q)=\sqrt{1-(k'(q))^{2}}=\frac{\vartheta_{2}^{2}\left(0 \mid q\right)}{\vartheta_{3}^{2}\left(0 \mid q\right)},
 \]
 as one obtains by putting $a=1$ and $b=k$ in \cite[Eq. 3.2.15]{Lawden}. With the aid of this integral formula, one arrives at \eqref{eq:Phi_inverse} in a routine way.
\end{proof}

\begin{thm}
 Let $t\in\mathbb{R}\cup\{\infty\}$, then
 \[
  \spec_{p}(A_t)=\left\{-e^{-s}q^{2m} \mid m\in\Z\right\}\cup\left\{e^{s}q^{2n} \mid n\in\Z\right\}
 \]
 where $s=\Phi^{-1}(t)$ is determined by \eqref{eq:Phi_inverse}.
\end{thm}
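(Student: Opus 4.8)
The plan is to feed the explicit reparametrization $t=\Phi(s)$ into the secular equation of Theorem~\ref{thm:secular_eq}, verify directly that the two claimed families are roots, and then close the argument by a zero-count based on the quasi-periodicity of the secular function.

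First I would rewrite $\Phi$ purely in terms of $\theta_{q^{4}}$. Inserting the definitions \eqref{eq:def_theta1234} of $\vartheta_{1}$ and $\vartheta_{4}$ with modulus $q^{2}$ and argument $\ii s$ into the quotient \eqref{eq:def_Phi}, the exponential and infinite-product prefactors cancel and one is left with
\[
\Phi(s)=e^{-s}\,\frac{\theta_{q^{4}}\!\left(q^{2}e^{-2s}\right)}{\theta_{q^{4}}\!\left(e^{-2s}\right)}.
\]
With $t=\Phi(s)$, the finite-$t$ secular equation $x\theta_{q^{4}}(q^{2}x^{2})+t\theta_{q^{4}}(x^{2})=0$ becomes, after clearing the denominator,
\[
x\,\theta_{q^{4}}\!\left(q^{2}x^{2}\right)\theta_{q^{4}}\!\left(e^{-2s}\right)+e^{-s}\theta_{q^{4}}\!\left(q^{2}e^{-2s}\right)\theta_{q^{4}}\!\left(x^{2}\right)=0.
\]

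Next I would substitute the two candidate families. For $x=e^{s}q^{2n}$ one has $x^{2}=q^{4n}e^{2s}$ and $q^{2}x^{2}=q^{4n}\!\cdot q^{2}e^{2s}$; applying the quasi-periodicity \eqref{eq:theta_power_q_id} with base $q^{4}$, namely $\theta_{q^{4}}(q^{4n}y)=(-1)^{n}y^{-n}q^{-2n(n-1)}\theta_{q^{4}}(y)$, extracts a common factor $(-1)^{n}e^{-2ns}q^{-2n(n-1)}$ and reduces the equation to
\[
e^{s}\theta_{q^{4}}\!\left(q^{2}e^{2s}\right)\theta_{q^{4}}\!\left(e^{-2s}\right)=-e^{-s}\theta_{q^{4}}\!\left(q^{2}e^{-2s}\right)\theta_{q^{4}}\!\left(e^{2s}\right).
\]
Here the reflection identity \eqref{eq:theta_recip_arg} gives $\theta_{q^{4}}(e^{2s})=-e^{2s}\theta_{q^{4}}(e^{-2s})$, while combining \eqref{eq:theta_recip_arg} with \eqref{eq:theta_power_q_id} yields $\theta_{q^{4}}(q^{2}e^{2s})=\theta_{q^{4}}(q^{2}e^{-2s})$, and the identity follows at once. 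The family $x=-e^{-s}q^{2m}$ is treated in exactly the same way and likewise satisfies the secular equation, so both families lie in $\spec_{p}(A_{t})$. The case $t=\infty$, where the secular equation is $\theta_{q^{4}}(x^{2})=0$ with zeros $x=\pm q^{2k}$, matches $s=\Phi^{-1}(\infty)=0$ and needs no separate work.

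The hard part will be completeness, i.e.\ showing there are no further roots. For this I would exploit the fact that the secular function $g(x):=x\theta_{q^{4}}(q^{2}x^{2})+t\theta_{q^{4}}(x^{2})$ is analytic and zero-free of poles on $\C\setminus\{0\}$, and satisfies the transformation law $g(q^{2}x)=-x^{-2}g(x)$, which is a short computation using \eqref{eq:theta_power_q_id} with $k=1$. Integrating $g'/g$ over the boundary of a fundamental annulus $\{\rho\le|x|<q^{-2}\rho\}$ for the map $x\mapsto q^{2}x$, this law forces (exactly as in the classical zero-count for theta functions) precisely two zeros, counted with multiplicity, in each such annulus. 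On the other hand the two explicit families $\{e^{s}q^{2n}\}$ (all positive) and $\{-e^{-s}q^{2m}\}$ (all negative) are mutually disjoint and contribute exactly one point each to every fundamental annulus, hence account for all two zeros per annulus; in particular there are no complex roots and each zero is simple, consistent with Theorem~\ref{thm:secular_eq}. Therefore the two families exhaust $\spec_{p}(A_{t})$, which is the claimed description.
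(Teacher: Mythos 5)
Your proposal is correct, and its second half takes a genuinely different route from the paper. The paper's proof first reduces to positive roots via the symmetry $x\leftrightarrow-1/x$ of the secular equation, substitutes $x=e^{-y}$ and $t=\Phi(s)$, and then invokes the addition formula
\[
\vartheta_{1}(u\mid q^{2})\vartheta_{4}(v\mid q^{2})+\vartheta_{1}(v\mid q^{2})\vartheta_{4}(u\mid q^{2})
=\vartheta_{1}\left(\tfrac{u+v}{2}\,\Big|\, q\right)\vartheta_{2}\left(\tfrac{u-v}{2}\,\Big|\, q\right)
\]
from Lawden to \emph{factor} the secular function; reading off the zeros of the two factors then delivers existence and completeness of the root set in a single step. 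You instead verify the two candidate families by direct substitution --- and your computations check out: the rewriting $\Phi(s)=e^{-s}\theta_{q^{4}}(q^{2}e^{-2s})/\theta_{q^{4}}(e^{-2s})$ is correct, as are the auxiliary identities $\theta_{q^{4}}(e^{2s})=-e^{2s}\theta_{q^{4}}(e^{-2s})$ and $\theta_{q^{4}}(q^{2}e^{2s})=\theta_{q^{4}}(q^{2}e^{-2s})$, and the functional equation $g(q^{2}x)=-x^{-2}g(x)$ for $g(x)=x\theta_{q^{4}}(q^{2}x^{2})+t\theta_{q^{4}}(x^{2})$ follows from \eqref{eq:theta_power_q_id} exactly as you say --- and you then obtain completeness from the argument principle: the factor $-x^{-2}$ forces exactly two zeros per fundamental annulus of $x\mapsto q^{2}x$, which are accounted for by exactly one member of each of the two disjoint (positive and negative) geometric families. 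Your route avoids importing the theta addition theorem, works globally on $\C\setminus\{0\}$ without the reduction to $x>0$, and yields as byproducts that all zeros are simple and that no non-real roots exist; the paper's factorization buys a one-line identification of the full root set with no separate counting. Two routine details you should state explicitly to make the counting airtight: choose the radius $\rho$ so that no zero of $g$ lies on the boundary circles (possible since the zero set of $g$ is discrete in $\C\setminus\{0\}$, where $g$ is analytic), and observe that $g\not\equiv0$ for every real $t$, e.g.\ because its odd part $x\theta_{q^{4}}(q^{2}x^{2})$ is not identically zero.
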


\begin{proof}
 By Theorem \ref{thm:secular_eq}, the set $\spec_{p}(A_t)$, for $t\in\R$, coincides with nonzero solutions of the equation
 \begin{equation}
  x\theta_{q^{4}}\left(q^{2}x^{2}\right)+t\theta_{q^{4}}\left(x^{2}\right)=0.
 \label{eq:secular_t_real}
 \end{equation}
 First, note that $x\neq0$ is a solution of \eqref{eq:secular_t_real} if and only if $-1/x$ is a solution of \eqref{eq:secular_t_real},
 as one easily verifies with the aid of \eqref{eq:theta_power_q_id} and \eqref{eq:theta_recip_arg}.
 
 Taking into account the symmetry $x\leftrightarrow-1/x$ of $\spec_{p}(A_t)$, we may restrict ourself to $x>0$ only. 
 Then, by putting $x=e^{-y}$, with $y\in\R$, and applying \eqref{eq:def_theta1234} we may rewrite \eqref{eq:secular_t_real} in the form
 \[
  \vartheta_{4}(\ii y\mid q^{2})-\ii q^{-1/2}t\vartheta_{1}(\ii y\mid q^{2})=0.
 \]
 Further, substituting for $t=\Phi(s)$ by \eqref{eq:def_Phi} and making use of the identity \cite[Eq.~1.4.8]{Lawden}
 \[
  \vartheta_{1}(u\mid q^{2})\vartheta_{4}(v\mid q^{2})+\vartheta_{1}(v\mid q^{2})\vartheta_{4}(u\mid q^{2})=
  \vartheta_{1}\left(\frac{u+v}{2}\,\Big|\, q\right)\vartheta_{2}\left(\frac{u-v}{2}\,\Big|\, q\right)\!,
 \]
 we obtain the secular equation in the form
 \[
  \vartheta_{1}\left(\ii\,\frac{s+y}{2}\,\Big|\, q\right)\vartheta_{2}\left(\ii\,\frac{s-y}{2}\,\Big|\, q\right)=0.
 \]
 Finally, with aid of \eqref{eq:def_theta1234}, one concludes that all the positive
 solutions of the secular equation have to satisfy the equation
 \[
  \theta_{q^{2}}\left(e^{-y-s}\right)=0,
 \]
 which are the numbers $x=e^{s}q^{2n}$, $n\in\Z$.
 
 The case $t=\infty$ has been treated in Corollary \ref{cor:spec_point_0_inf}.
\end{proof}

\begin{rem}\label{rem:eigenval_fill}
 Note that if $t$ ranges $(-\infty,\infty]$ then $e^{s}$ ranges the interval $[1,q^{-2})$.
 Thus, to every $\omega\neq0$, there exists a unique self-adjoint extension $A_{t}$ of $A$ with 
 \[
  \spec_{p}(A_t)=-\omega^{-1}q^{2\mathbb{Z}}\cup\omega q^{2\mathbb{Z}}.
 \]
 Indeed, the parameter $t$ is related to $\omega$ by the relation
 \[
  \exp\Phi^{-1}(t)=|\omega|q^{-2\lfloor\log_{q^{2}}|\omega|\rfloor}
 \]
 where $\lfloor x \rfloor$ denotes the largest integer less than or equal to $x\in\R$.
\end{rem}

\subsection{Consequences for the Ramanujan entire function}

Recall the Ramanujan entire function is defined as 
\[
 A_{q}(z):={}_{0}\phi_{1}\left(-;0;q,-qz\right)\!, \quad z\in\C.
\]
This function appeared repeatedly in the Ramanujan's ``Lost notebook'' \cite{Ramanujan} and represents one of possible 
$q$-analogues to the Airy function, see \cite{Ismail05}. By using the expression \eqref{eq:varphi_2nd_expr}, the entries 
of $\varphi(x)$ can be expressed in terms of the Ramanujan entire function as follows:
\begin{equation}
\varphi_{n}(x)=(-1;q)_{\infty} x^{n}q^{n(n-1)/2}A_{q^{2}}\left(q^{-2n+2}x^{-2}\right)
\label{eq:phi_Ramanujan}
\end{equation}
where $n\in\Z$ and $x\neq0$. 

Let us also remark that
\begin{equation}
 \|\varphi\left(x^{-1}\right)\|^{2}=x^{-2}\|\varphi(x)\|^{2},
\label{eq:norm_varphi_recip_arg}
\end{equation}
for $x\in\R\setminus\{0\}$, as one readily verifies by using \eqref{eq:norm_varphi} together with \eqref{eq:theta_recip_arg}.

\begin{prop}
 For any $\omega\in\R\setminus\{0\}$ and all $m,n\in\Z$ , one has
 \begin{equation}
  \sum_{k=-\infty}^{\infty}\varphi_{k}\left(\omega q^{2m}\right)\varphi_{k}\left(\omega q^{2n}\right)=\omega^{-4n}q^{-2n(2n-1)}\|\varphi\left(\omega\right)\|^{2}\,\delta_{m,n}
 \label{eq:OG_rel_varphi_first}  
 \end{equation}
 and
 \begin{equation}
  \sum_{k=-\infty}^{\infty}\varphi_{k}\left(\omega q^{2m}\right)\varphi_{k}\left(-\omega^{-1}q^{2n}\right)=0
 \label{eq:OG_rel_varphi_second}
 \end{equation}
 where the $\ell^{2}$-norm on the RHS of \eqref{eq:OG_rel_varphi_first} is given in Proposition \ref{prop:l2norm}.
\end{prop}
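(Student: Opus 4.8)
The plan is to read both sums as inner products of eigenvectors of one and the same self-adjoint extension, and then to deduce the orthogonality from the general fact that eigenvectors of a self-adjoint operator attached to distinct eigenvalues are orthogonal. Fix $\omega\in\R\setminus\{0\}$. By Remark~\ref{rem:eigenval_fill} there is a (unique) self-adjoint extension $A_t$ of $A$ whose point spectrum is exactly $-\omega^{-1}q^{2\Z}\cup\omega q^{2\Z}$; in particular all the numbers $\omega q^{2m}$ and all the numbers $-\omega^{-1}q^{2n}$, $m,n\in\Z$, are eigenvalues of this single operator $A_t$. Furthermore, by Proposition~\ref{prop:l2_sol} together with the characterization of the point spectrum established in the proof of Theorem~\ref{thm:secular_eq}, the eigenvector of $A_t$ at any of its eigenvalues $x$ is precisely $\varphi(x)$ (unique up to a scalar). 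Since the entries $\varphi_k(x)$ are real for real $x$, as is visible from \eqref{eq:varphi_2nd_expr}, the sums on the left-hand sides of \eqref{eq:OG_rel_varphi_first} and \eqref{eq:OG_rel_varphi_second} coincide with the $\ell^2(\Z)$ inner products of the corresponding eigenvectors.

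Granted this, the vanishing statements are immediate. For \eqref{eq:OG_rel_varphi_first} with $m\neq n$ the points $\omega q^{2m}$ and $\omega q^{2n}$ are two distinct eigenvalues of $A_t$, so their eigenvectors are orthogonal and the sum is zero, which accounts for the factor $\delta_{m,n}$. For \eqref{eq:OG_rel_varphi_second} I first observe that $\omega q^{2m}$ and $-\omega^{-1}q^{2n}$ can never coincide, because $\omega q^{2m}=-\omega^{-1}q^{2n}$ would force $\omega^2 q^{2m}=-q^{2n}<0$, which is impossible; hence these are again two distinct eigenvalues of $A_t$ and the sum vanishes by the same orthogonality.

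It then remains to evaluate the diagonal term $m=n$ of \eqref{eq:OG_rel_varphi_first}, namely the squared norm $\|\varphi(\omega q^{2n})\|^2$. Here I would apply Proposition~\ref{prop:l2norm} at $z=\omega q^{2n}$, producing a factor $\theta_{q^2}(-\omega^2 q^{4n})$, and then invoke the quasi-periodicity identity \eqref{eq:theta_power_q_id} with base $q^2$ (i.e.\ replacing $q$ by $q^2$) and shift $k=2n$ to obtain
\[
\theta_{q^2}\!\left(-\omega^2 q^{4n}\right)=\omega^{-4n}q^{-2n(2n-1)}\theta_{q^2}\!\left(-\omega^2\right)\!.
\]
Comparing this with Proposition~\ref{prop:l2norm} at $z=\omega$ gives $\|\varphi(\omega q^{2n})\|^2=\omega^{-4n}q^{-2n(2n-1)}\|\varphi(\omega)\|^2$, which is precisely the prefactor in \eqref{eq:OG_rel_varphi_first}.

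The substance of the argument, and the only step requiring real care, is the first paragraph: one must be sure that all the spectral points involved belong to the point spectrum of one common self-adjoint extension, and this is exactly what the reparametrization recorded in Remark~\ref{rem:eigenval_fill} supplies. After that, orthogonality is a soft consequence of self-adjointness, and the sole remaining computation is the theta-function identity above, where the one thing to monitor is the correct accounting of signs and powers of $q$ when applying \eqref{eq:theta_power_q_id}.
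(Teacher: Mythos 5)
Your proposal is correct and takes essentially the same route as the paper's own proof: you identify all the points $\omega q^{2m}$ and $-\omega^{-1}q^{2n}$ as eigenvalues of one common self-adjoint extension $A_t$ via Remark~\ref{rem:eigenval_fill}, with eigenvectors $\varphi(x)$ by Proposition~\ref{prop:l2_sol}, deduce the off-diagonal vanishing from orthogonality of eigenvectors of a self-adjoint operator, and obtain the prefactor from Proposition~\ref{prop:l2norm} combined with the quasi-periodicity identity \eqref{eq:theta_power_q_id} in base $q^{2}$, exactly as the paper does. The small extra checks you include (reality of the entries $\varphi_k(x)$ so that the sums are genuine inner products, and the impossibility of $\omega q^{2m}=-\omega^{-1}q^{2n}$) are correct refinements of the same argument.
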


\begin{proof}
 For any $\omega\in\R\setminus\{0\}$, $\{\varphi(\omega q^{2m}) \mid m\in\Z\}\cup\{\varphi(-\omega^{-1}q^{2n}) \mid n\in\Z\}$ is the set of eigenvectors 
 of a self-adjoint operator $A_{t}$ for some $t$, see Remark \ref{rem:eigenval_fill}.
 The statement is nothing but the orthogonality of eigenvectors of a self-adjoint operator. Indeed, the second relation \eqref{eq:OG_rel_varphi_second} is immediate.
 The first relation \eqref{eq:OG_rel_varphi_first} is a consequence of the orthogonality relation
 \[
  \left\langle\varphi(\omega q^{2m}),\varphi(\omega q^{2n})\right\rangle=\|\varphi\left(\omega q^{2n}\right)\|^{2}\,\delta_{m,n}
 \]
 and the identity
 \begin{equation}
  \|\varphi\left(\omega q^{2n}\right)\|^{2}=\omega^{-4n}q^{-2n(2n-1)}\|\varphi\left(\omega\right)\|^{2}
 \label{eq:norm_varphi_power_q}
 \end{equation}
 which one deduces straightforwardly by using \eqref{eq:norm_varphi} and \eqref{eq:theta_power_q_id}.
 
\end{proof}

\begin{cor}
 For $z\in\C\setminus\{0\}$ and $\ell\in\Z$, the Ramanujan entire function satisfies the following orthogonality relations:
 \begin{equation}
  \sum_{k=-\infty}^{\infty}z^{k}q^{k(k-1)/2}A_{q}\left(zq^{k+\ell}\right)A_{q}\left(zq^{k-\ell}\right)=(q;q)_{\infty}^{2}\,\theta_{q}\left(-z\right)\delta_{0,\ell}
 \label{eq:OG_rel_Ramnujan_first}
 \end{equation}
 and
 \begin{equation}
  \sum_{k=-\infty}^{\infty}(-1)^{k}q^{k(k-1)/2}A_{q}\left(zq^{k+\ell}\right)A_{q}\left(z^{-1}q^{k-\ell}\right)=0.
 \label{eq:OG_rel_Ramnujan_second}
 \end{equation}
\end{cor}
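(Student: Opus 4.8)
My proposal is to obtain both identities directly from the eigenvector orthogonality relations \eqref{eq:OG_rel_varphi_first} and \eqref{eq:OG_rel_varphi_second} proved just above, by inserting the closed form \eqref{eq:phi_Ramanujan} that expresses every entry $\varphi_n(x)$ through the Ramanujan function $A_{q^{2}}$. Because \eqref{eq:phi_Ramanujan} carries the base $q^{2}$ rather than $q$, I would first derive the two relations with $A_{q^{2}}$ and the base $q^{2}$ throughout, and only at the very end relabel $q^{2}\mapsto q$; this is legitimate since the final identity must hold for every $q\in(0,1)$ and $q^{2}$ sweeps out all of $(0,1)$ as $q$ does.

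For \eqref{eq:OG_rel_Ramnujan_first} I would substitute \eqref{eq:phi_Ramanujan} into \eqref{eq:OG_rel_varphi_first}, reflect the summation index $k\mapsto-k$, and introduce the new variables $z:=\omega^{-2}q^{2(1-m-n)}$ and $\ell:=n-m$. A short manipulation turns the two $A_{q^{2}}$-arguments into $zq^{2(k+\ell)}$ and $zq^{2(k-\ell)}$, while the accumulated factor $\omega^{2k}q^{2(m+n)k+k(k-1)}$ collapses exactly to $z^{k}q^{k(k-1)}$, i.e. $z^{k}$ times the base-$q^{2}$ Gaussian weight. For \eqref{eq:OG_rel_Ramnujan_second} the same reflection applied to \eqref{eq:OG_rel_varphi_second} is consistent only when $m+n=1$; with that choice the powers of $\omega$ coming from the two arguments $\omega q^{2m}$ and $-\omega^{-1}q^{2n}$ cancel, the surviving sign yields the alternating factor $(-1)^{k}$, and the arguments become $zq^{2(k+\ell)}$ and $z^{-1}q^{2(k-\ell)}$. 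Since the right-hand side of \eqref{eq:OG_rel_varphi_second} vanishes identically, this already produces \eqref{eq:OG_rel_Ramnujan_second} in base $q^{2}$.

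It remains to simplify the right-hand side of the first relation, and here the single non-mechanical step occurs. For $\ell=0$ (equivalently $m=n$) I would replace $\|\varphi(\omega)\|^{2}$ using Proposition \ref{prop:l2norm} and then absorb the prefactor $\omega^{-4n}q^{-2n(2n-1)}$ into the theta function via \eqref{eq:theta_power_q_id} and \eqref{eq:theta_recip_arg}; all powers of $\omega$ and $q$ cancel and leave precisely $\theta_{q^{2}}(-z)$. The delicate point is that the leftover numerical constant $4(q^{2};q^{2})_{\infty}^{2}\big/\big[(-1;q)_{\infty}^{2}(q;q^{2})_{\infty}^{2}\big]$ must reduce to the clean factor $(q^{2};q^{2})_{\infty}^{2}$; this is exactly the statement $(-1;q)_{\infty}(q;q^{2})_{\infty}=2$, which follows after splitting off the $j=0$ term of $(-1;q)_{\infty}$ from Euler's product identity $(-q;q)_{\infty}(q;q^{2})_{\infty}=1$. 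For $\ell\neq0$ the Kronecker delta on the right of \eqref{eq:OG_rel_varphi_first} forces the sum to vanish, matching the $\delta_{0,\ell}$. I expect this combined index/prefactor bookkeeping together with verifying the constant collapse to be the main obstacle; none of it is deep, but it is where an error is easiest to make.

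Finally, the substitution $z=\omega^{-2}q^{2(1-m-n)}$ with $\omega\in\R\setminus\{0\}$ realises only positive real $z$, whereas the corollary is claimed for all $z\in\C\setminus\{0\}$. I would close this gap by analytic continuation: after relabelling $q^{2}\mapsto q$, both sides of \eqref{eq:OG_rel_Ramnujan_first} and \eqref{eq:OG_rel_Ramnujan_second} are holomorphic in $z$ on $\C\setminus\{0\}$ — the right-hand sides obviously, and the left-hand sides because the series converge locally uniformly there, an estimate already available from the proof of Proposition \ref{prop:l2norm} through the expression \eqref{eq:varphi_2nd_expr}. Agreement on the ray $(0,\infty)$ then forces agreement on all of $\C\setminus\{0\}$ by the identity theorem, which completes the argument.
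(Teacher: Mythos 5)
Your proposal is correct and follows essentially the same route as the paper: substitute the closed form \eqref{eq:phi_Ramanujan} into \eqref{eq:OG_rel_varphi_first} and \eqref{eq:OG_rel_varphi_second}, perform the index/variable bookkeeping (your reflection $k\mapsto-k$ with $z=\omega^{-2}q^{2(1-m-n)}$ is the paper's shift $j=-k+1+m+n$ with $z=\omega^{-2}$ in slightly different packaging), collapse the constant via Euler's identity $(-q;q)_{\infty}(q;q^{2})_{\infty}=1$, and extend from $z>0$ to $\C\setminus\{0\}$ by the identity principle. Your explicit identification of the constraint $m+n=1$ in the second relation is a detail the paper leaves implicit under ``similarly,'' but it is the same argument.
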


\begin{proof}
 The first orthogonality relation \eqref{eq:OG_rel_Ramnujan_first} follows from \eqref{eq:OG_rel_varphi_first}. Indeed, by substituting in \eqref{eq:OG_rel_varphi_first} by~\eqref{eq:phi_Ramanujan},
 shifting the summation index by introducing $j=-k+1+m+n$, writing $-m$ instead of $m$, $-n$ instead of $n$, $q^{1/2}$ instead of $q$, and finally putting $z=\omega^{-2}$, one arrives at the formula
 \[
  \sum_{j=-\infty}^{\infty}z^{j}q^{j(j-1)/2}A_{q}\left(zq^{j+(m-n)}\right)A_{q}\left(zq^{j-(m-n)}\right)=
  \frac{4\left(q;q\right)_{\infty}^{2}}{\left(-1;q^{1/2}\right)_{\!\infty}^{\!2}\left(q^{1/2};q\right)_{\!\infty}^{\!2}}\theta_{q}\left(-z\right)\delta_{m,n}
 \]
 where we used \eqref{eq:norm_varphi} and \eqref{eq:norm_varphi_recip_arg}. This yields \eqref{eq:OG_rel_Ramnujan_first} for $z>0$, since $(-q;q)_{\infty}(q;q^{2})_{\infty}=1$.
 Similarly, one obtains the second relation \eqref{eq:OG_rel_Ramnujan_second} starting from \eqref{eq:OG_rel_varphi_second}, for $z>0$.
 
 It is very easy to verify the series on the LHSs of \eqref{eq:OG_rel_Ramnujan_first} and \eqref{eq:OG_rel_Ramnujan_second} both locally converge in $z$ on $\C\setminus\{0\}$ and 
 hence they are analytic functions in $z$ on $\C\setminus\{0\}$. In addition, the RHSs of \eqref{eq:OG_rel_Ramnujan_first} and \eqref{eq:OG_rel_Ramnujan_second} are functions analytic in $z$ on $\C\setminus\{0\}$, too. 
 Since \eqref{eq:OG_rel_Ramnujan_first} and \eqref{eq:OG_rel_Ramnujan_second} holds for all $z>0$, as we have already proved, their validity extends to all $z\in\C\setminus\{0\}$ by the Identity Principle 
 for analytic functions.
\end{proof}

At this point, we also mention the dual orthogonality relation and its consequences for the Ramanujan entire function.

\begin{prop}
 For any $\omega\in\R\setminus\{0\}$ and all $k,\ell\in\Z$ , one has
 \begin{equation}
  \sum_{n=-\infty}^{\infty}\omega^{2n}q^{n(n-1)}\left[\varphi_{k}\left(\omega q^{n}\right)\varphi_{\ell}\left(\omega q^{n}\right)+\varphi_{k}\left(-\omega^{-1}q^{-n+1}\right)\varphi_{\ell}\left(-\omega^{-1}q^{-n+1}\right)\right]
  =2\|\varphi\left(\omega\right)\!\|^{2}\delta_{k,\ell}
  \label{eq:OG_rel_varphi_third}
 \end{equation}
  where the $\ell^{2}$-norm on the RHS is given in Proposition \ref{prop:l2norm}.
\end{prop}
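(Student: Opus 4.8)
The plan is to read \eqref{eq:OG_rel_varphi_third} as a completeness (Parseval) relation for the eigenvectors of $A$. The key observation is that the indexing set $\omega q^{\Z}\cup(-\omega^{-1}q^{\Z})$ of the sum is \emph{not} the spectrum of a single extension; rather, it is the disjoint union of the point spectra of two distinct extensions. By Remark~\ref{rem:eigenval_fill} there is a self-adjoint extension $A_{t}$ with $\spec_{p}(A_{t})=\omega q^{2\Z}\cup(-\omega^{-1}q^{2\Z})$, and, applying the same remark to the parameter $\omega q$, a self-adjoint extension $A_{t'}$ with $\spec_{p}(A_{t'})=\omega q^{2\Z+1}\cup(-\omega^{-1}q^{2\Z-1})$. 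Their eigenvalue sets are disjoint, and together they exhaust $\omega q^{\Z}\cup(-\omega^{-1}q^{\Z})$; this splitting into two complete systems is precisely what produces the factor $2$ on the right-hand side.

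First I would establish that, for every $t$, the eigenvectors $\{\varphi(\lambda)\mid\lambda\in\spec_{p}(A_{t})\}$ form a complete orthogonal system in $\ell^{2}(\Z)$. By Proposition~\ref{prop:spec_A_c_eq_0}, $\spec(A_{t})=\spec_{p}(A_{t})\cup\{0\}$, where $\spec_{p}(A_{t})$ is discrete and accumulates only at $0$, and $0$ is not an eigenvalue. Hence the spectral projection $E_{A_{t}}(\{0\})$ vanishes, and since $\spec(A_{t})\setminus\{0\}$ is discrete there is no absolutely or singularly continuous part. The resolution of identity therefore collapses to $I=\sum_{\lambda\in\spec_{p}(A_{t})}E_{A_{t}}(\{\lambda\})$; as all eigenvalues are simple (Theorem~\ref{thm:secular_eq}, with $\varphi(\lambda)$ the unique $\ell^{2}$ solution by Proposition~\ref{prop:l2_sol}), each $E_{A_{t}}(\{\lambda\})$ is the rank-one projection onto $\varphi(\lambda)$. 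Taking matrix elements in the standard basis, and noting that $\varphi_{n}(\lambda)$ is real for real $\lambda$ by \eqref{eq:varphi_2nd_expr}, yields the completeness relation $\sum_{\lambda}\|\varphi(\lambda)\|^{-2}\varphi_{k}(\lambda)\varphi_{\ell}(\lambda)=\delta_{k,\ell}$.

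Next I would make the weights explicit. Starting from $\|\varphi(x)\|^{2}=4\,(q^{2};q^{2})_{\infty}^{2}(q;q^{2})_{\infty}^{-2}\,\theta_{q^{2}}(-x^{2})$ in \eqref{eq:norm_varphi} and applying \eqref{eq:theta_power_q_id} and \eqref{eq:theta_recip_arg} with base $q^{2}$, one obtains for every $n\in\Z$
\[
 \|\varphi(\omega q^{n})\|^{2}=\omega^{-2n}q^{-n(n-1)}\|\varphi(\omega)\|^{2},\qquad
 \|\varphi(-\omega^{-1}q^{n})\|^{2}=\omega^{2n-2}q^{-n(n-1)}\|\varphi(\omega)\|^{2}.
\]
Writing down the completeness relation of the previous paragraph for $A_{t}$ (whose families run over even $n$) and for $A_{t'}$ (odd $n$) and adding the two identities, the first families merge into $\sum_{n\in\Z}\|\varphi(\omega q^{n})\|^{-2}\varphi_{k}(\omega q^{n})\varphi_{\ell}(\omega q^{n})$ and the second families into $\sum_{n\in\Z}\|\varphi(-\omega^{-1}q^{n})\|^{-2}\varphi_{k}(-\omega^{-1}q^{n})\varphi_{\ell}(-\omega^{-1}q^{n})$. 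Inserting the weights above, the first sum acquires the factor $\omega^{2n}q^{n(n-1)}/\|\varphi(\omega)\|^{2}$; the second acquires $\omega^{2-2n}q^{n(n-1)}/\|\varphi(\omega)\|^{2}$, which the reindexing $n\mapsto 1-n$ converts into weight $\omega^{2n}q^{n(n-1)}/\|\varphi(\omega)\|^{2}$ and argument $-\omega^{-1}q^{-n+1}$. The sum of the two completeness relations thus equals $2\delta_{k,\ell}$, and multiplying through by $\|\varphi(\omega)\|^{2}$ gives exactly \eqref{eq:OG_rel_varphi_third}.

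I expect the only genuine obstacle to be the spectral-theoretic completeness step: one must confirm that the accumulation point $0$, which lies in $\spec_{ess}(A_{t})$ yet is not an eigenvalue, carries no spectral mass, so that the eigenvectors truly span $\ell^{2}(\Z)$. Once this is secured, the remainder is a bookkeeping exercise with the theta-function identities already recorded in the paper, together with the observation that the two parities of $n$ correspond to the two distinct extensions $A_{t}$ and $A_{t'}$.
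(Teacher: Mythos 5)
Your proof is correct and follows essentially the same route as the paper: Parseval's identity for the complete orthogonal eigenvector system of a self-adjoint extension with pure point spectrum, the norm-scaling identities obtained from \eqref{eq:norm_varphi} together with \eqref{eq:theta_power_q_id} and \eqref{eq:theta_recip_arg}, summation of the even- and odd-index relations, and the reindexing $n\mapsto 1-n$. The only cosmetic difference is that the paper generates the odd-index Parseval relation by substituting $q\omega$ for $\omega$ in the even-index one, which is precisely your second extension $A_{t'}$ in disguise.
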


\begin{proof}
 For any $t\in\R\cup\{\infty\}$, we know the spectrum of the self-adjoint operator $A_{t}$ consists of isolated simple eigenvalues and zero which, in turn, is never an eigenvalue.
 Hence, with the aid of the Spectral Theorem, one infers that $A_{t}$ is an operator with pure point spectrum, i.e., its normalized eigenvectors form an orthonormal basis of $\ell^{2}(\Z)$.
 Recall that the set of eigenvectors of $A_{t}$ is of the form $\{\varphi(\omega q^{2m}) \mid m\in\Z\}\cup\{\varphi(-\omega^{-1}q^{2n}) \mid n\in\Z\}$, see Remark~\ref{rem:eigenval_fill}. 
  
 Thus, Parseval's equality yields
 \[
  \sum_{\lambda\in\omega q^{2\Z}}\frac{\varphi_{k}\left(\lambda\right)\varphi_{\ell}\left(\lambda\right)}{\|\varphi\left(\lambda\right)\|^{2}}+
  \frac{\varphi_{k}\left(-\lambda^{-1}\right)\varphi_{\ell}\left(-\lambda^{-1}\right)}{\|\varphi\left(-\lambda^{-1}\right)\|^{2}}=\delta_{k,\ell},
 \]
 for all $k,\ell\in\Z$. Taking into account \eqref{eq:norm_varphi_recip_arg} and \eqref{eq:norm_varphi}, the last relation can be written as
 \[
  \sum_{\lambda\in\omega q^{2\Z}}\frac{1}{\|\varphi\left(\lambda\right)\|^{2}}\left(\varphi_{k}\left(\lambda\right)\varphi_{\ell}\left(\lambda\right)+
  \lambda^{2}\varphi_{k}\left(-\lambda^{-1}\right)\varphi_{\ell}\left(-\lambda^{-1}\right)\right)=\delta_{k,\ell}.
 \]
 Further, by using \eqref{eq:norm_varphi_power_q}, one gets
 \begin{align*}
  \sum_{n=-\infty}^{\infty}\omega^{4n}q^{2n(2n-1)}\left[\varphi_{k}\left(\omega q^{2n}\right)\varphi_{\ell}\left(\omega q^{2n}\right)+\omega^{2}q^{4n}\varphi_{k}\left(-\omega^{-1}q^{-2n}\right)\varphi_{\ell}\left(-\omega^{-1}q^{-2n}\right)\right]&\\
  &\hskip-24pt=\|\varphi\left(\omega\right)\!\|^{2}\delta_{k,\ell}
 \end{align*}
 or equivalently
 \begin{align}
  \sum_{N\in2\Z}\omega^{2N}q^{N(N-1)}\left[\varphi_{k}\left(\omega q^{N}\right)\varphi_{\ell}\left(\omega q^{N}\right)+\omega^{2}q^{2N}\varphi_{k}\left(-\omega^{-1}q^{-N}\right)\varphi_{\ell}\left(-\omega^{-1}q^{-N}\right)\right]&\nonumber\\
  &\hskip-64pt=\|\varphi\left(\omega\right)\!\|^{2}\delta_{k,\ell}.
  \label{eq:OG_rel_varphi_even_inproof}
 \end{align}
 By writing $q\omega$ instead of $\omega$ in \eqref{eq:OG_rel_varphi_even_inproof} one also obtains the same relation with odd summation indices, i.e.,
 \begin{align}
  \sum_{N\in2\Z+1}\omega^{2N}q^{N(N-1)}\left[\varphi_{k}\left(\omega q^{N}\right)\varphi_{\ell}\left(\omega q^{N}\right)+\omega^{2}q^{2N}\varphi_{k}\left(-\omega^{-1}q^{-N}\right)\varphi_{\ell}\left(-\omega^{-1}q^{-N}\right)\right]&\nonumber\\
  &\hskip-64pt=\|\varphi\left(\omega\right)\!\|^{2}\delta_{k,\ell}
  \label{eq:OG_rel_varphi_odd_inproof}
 \end{align}
 where we used that $\|\varphi\left(q\omega\right)\!\|^{2}=\omega^{-2}\|\varphi\left(\omega\right)\!\|^{2}$ which follows from \eqref{eq:norm_varphi} and \eqref{eq:theta_power_q_id}.
 
 By summing up \eqref{eq:OG_rel_varphi_even_inproof} and \eqref{eq:OG_rel_varphi_odd_inproof}, one finds
 \[
  \sum_{n=-\infty}^{\infty}\omega^{2n}q^{n(n-1)}\left[\varphi_{k}\left(\omega q^{n}\right)\varphi_{\ell}\left(\omega q^{n}\right)+\omega^{2}q^{2n}\varphi_{k}\left(-\omega^{-1}q^{-n}\right)\varphi_{\ell}\left(-\omega^{-1}q^{-n}\right)\right]
  =2\|\varphi\left(\omega\right)\!\|^{2}\delta_{k,\ell},
 \]
 for all $k,\ell\in\Z$. Finally, to end with the relation \eqref{eq:OG_rel_varphi_third}, it suffices to split the sum in the last expression into two sums and shift the index by 1 in the second sum.
\end{proof}

\begin{cor}
  For $z\in\C\setminus\{0\}$ and $k,\ell\in\Z$, the Ramanujan entire function satisfies the following orthogonality relation:
 \begin{align}
  \sum_{n=-\infty}^{\infty}q^{n(n+k+\ell)}&\left(B_{n+k}(z)B_{n+\ell}(z)+B_{n+k}\!\left(-z^{-1}\right)B_{n+\ell}\!\left(-z^{-1}\right)\right)\nonumber\\
  &\hskip136pt=2q^{-k^{2}}\left(q^{2};q^{2}\right)_{\!\infty}^{\!2}\theta_{q^{2}}\left(-qz^{2}\right)\delta_{k,\ell}
 \label{eq:OG_rel_Ramnujan_third}
 \end{align}
 where 
 \[
  B_{j}(z)=z^{j}A_{q^{2}}\left(z^{2}q^{2j+1}\right)\!, \quad j\in\Z.
 \]
\end{cor}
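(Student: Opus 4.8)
The plan is to derive \eqref{eq:OG_rel_Ramnujan_third} directly from the dual orthogonality relation \eqref{eq:OG_rel_varphi_third}, in exact parallel with the way \eqref{eq:OG_rel_Ramnujan_first}--\eqref{eq:OG_rel_Ramnujan_second} were obtained from \eqref{eq:OG_rel_varphi_first}--\eqref{eq:OG_rel_varphi_second}; the essential input is again the expression \eqref{eq:phi_Ramanujan} of $\varphi_{n}$ through the Ramanujan function $A_{q^{2}}$. First I would substitute \eqref{eq:phi_Ramanujan} into both products inside the bracket of \eqref{eq:OG_rel_varphi_third}, keeping the summation index $n$ and the fixed indices $k,\ell$ apart and collecting all powers of $\omega$ and $q$. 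Setting $z:=q^{1/2}\omega^{-1}$ (so that $\omega^{-2}=q^{-1}z^{2}$), the $A_{q^{2}}$-factor coming from $\varphi_{k}(\omega q^{n})$ has argument $\omega^{-2}q^{-2n-2k+2}=z^{2}q^{2(-n-k)+1}$, which is precisely the argument occurring in $B_{-n-k}(z)$; hence the first bracket term reduces, after the $\omega$-powers cancel, to a constant multiple of $q^{n^{2}+n(k+\ell)}B_{-n-k}(z)B_{-n-\ell}(z)$.

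The decisive step is then a reindexing of the bilateral sum. Since the $B$-indices appear as $-n-k,-n-\ell$ rather than $n+k,n+\ell$, I would apply the substitution $n\mapsto -n-k-\ell$, under which the weight $q^{n^{2}+n(k+\ell)}$ is invariant while the indices flip to $n+\ell,n+k$; this brings the first term to $c^{2}q^{(k^{2}+\ell^{2})/2}\sum_{n}q^{n(n+k+\ell)}B_{n+k}(z)B_{n+\ell}(z)$ with $c=(-1;q)_{\infty}$. The second bracket term, built from $\varphi_{k}(-\omega^{-1}q^{-n+1})$, is handled identically: its $A_{q^{2}}$-arguments are $\omega^{2}q^{2n-2k}$, which have the reciprocal base $z^{-1}$, and after the same reindexing together with the elementary parity identity $B_{j}(-u)=(-1)^{j}B_{j}(u)$ one converts $B_{n+k}(z^{-1})B_{n+\ell}(z^{-1})$ into $B_{n+k}(-z^{-1})B_{n+\ell}(-z^{-1})$, the stray signs $(-1)^{k+\ell}$ cancelling against those already present. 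I expect the main obstacle here to be purely one of bookkeeping rather than of ideas: one must verify that both bracket terms yield the \emph{identical} prefactor $c^{2}q^{(k^{2}+\ell^{2})/2}$ and the common weight $q^{n(n+k+\ell)}$, since only then can the constant be cleared uniformly to leave the clean symmetric form of \eqref{eq:OG_rel_Ramnujan_third}.

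It then remains to rewrite the right-hand side $2\|\varphi(\omega)\|^{2}\delta_{k,\ell}$. Here I would invoke Proposition~\ref{prop:l2norm}: by \eqref{eq:norm_varphi} one has $\|\varphi(\omega)\|^{2}=4(q^{2};q^{2})_{\infty}^{2}(q;q^{2})_{\infty}^{-2}\theta_{q^{2}}(-\omega^{2})$, and the reflection $\theta_{q^{2}}(-q^{2}\omega^{-2})=\theta_{q^{2}}(-\omega^{2})$ (a direct consequence of \eqref{eq:theta_power_q_id} and \eqref{eq:theta_recip_arg}) identifies the theta factor with $\theta_{q^{2}}(-qz^{2})$. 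On the diagonal forced by $\delta_{k,\ell}$ the constant $q^{(k^{2}+\ell^{2})/2}=q^{k^{2}}$ supplies the factor $q^{-k^{2}}$, and the numerical constants reconcile through $(-1;q)_{\infty}(q;q^{2})_{\infty}=2$ (equivalently $2(-q;q)_{\infty}(q;q^{2})_{\infty}=2$), producing exactly $2q^{-k^{2}}(q^{2};q^{2})_{\infty}^{2}\theta_{q^{2}}(-qz^{2})\delta_{k,\ell}$. This establishes \eqref{eq:OG_rel_Ramnujan_third} for $z>0$ (as $\omega$ ranges over the positive reals), and since both sides are analytic in $z$ on $\C\setminus\{0\}$, the Identity Principle for analytic functions extends the identity to all $z\in\C\setminus\{0\}$, exactly as in the proof of the preceding corollary.
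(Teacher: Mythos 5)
Your proposal is correct and takes essentially the same route as the paper: substitute $z=q^{1/2}\omega^{-1}$ and the Ramanujan-function expression \eqref{eq:phi_Ramanujan} into the dual relation \eqref{eq:OG_rel_varphi_third}, reindex the bilateral sums, absorb the signs via $B_{j}(-u)=(-1)^{j}B_{j}(u)$, evaluate the norm by \eqref{eq:norm_varphi} with the reflection $\theta_{q^{2}}(-q^{2}\omega^{-2})=\theta_{q^{2}}(-\omega^{2})$ and the Euler identity $(-q;q)_{\infty}(q;q^{2})_{\infty}=1$, and extend from $z>0$ by analyticity --- all of which checks out, including the prefactor $c^{2}q^{(k^{2}+\ell^{2})/2}$ and the weight $q^{n(n+k+\ell)}$. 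The only nuance is that for the second bracket term (weight $q^{n^{2}-n(k+\ell)}$, indices $n-k$, $n-\ell$) the correct reindexing is the shift $n\mapsto n+k+\ell$ (equivalently $n\mapsto-n$ followed by your flip $n\mapsto-n-k-\ell$), not literally the same substitution as for the first term, but this is precisely the bookkeeping you flagged and it does yield the identical prefactor and weight.
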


\begin{proof}
 Again, we verify \eqref{eq:OG_rel_Ramnujan_third} only for $z\in\R\setminus\{0\}$. Then the validity of \eqref{eq:OG_rel_Ramnujan_third} is to be extended to all $z\in\C\setminus\{0\}$ by the analyticity argument.
 
 Introducing $z:=q^{1/2}\omega^{-1}$, one can rewrite \eqref{eq:OG_rel_varphi_third} as
 \begin{align*}
  \sum_{n=-\infty}^{\infty}q^{n^{2}}&\left[z^{-2n}\varphi_{k}\left(z^{-1}q^{n+1/2}\right)\varphi_{\ell}\left(z^{-1}q^{n+1/2}\right)+z^{2n}\varphi_{k}\left(-zq^{n+1/2}\right)\varphi_{\ell}\left(-zq^{n+1/2}\right)\right]\\
  &\hskip254pt=2\|\varphi\left(q^{1/2}z^{-1}\right)\!\|^{2}\delta_{k,\ell}.
 \end{align*}
 Then, substituting by \eqref{eq:phi_Ramanujan} into the last equation, using \eqref{eq:norm_varphi} and making some simple manipulations, one readily arrives at \eqref{eq:OG_rel_Ramnujan_third}.
\end{proof}

\begin{rem}
 Assume $k+\ell\in2\Z$, then, by writing $n-(k+\ell)/2$ instead of $n$, $z$ instead of $z^{2}$ and $q$ instead of $q^{2}$ in \eqref{eq:OG_rel_Ramnujan_third}, one gets the orthogonality relation
 \begin{align*}
  \sum_{n=-\infty}^{\infty}\!q^{n^{2}/2}&\left[z^{n}A_{q}\!\left(\!zq^{n+m+1/2}\right)\!A_{q}\!\left(\!zq^{n-m+1/2}\right)+z^{-n}A_{q}\!\left(\!z^{-1}q^{n+m+1/2}\right)\!A_{q}\!\left(\!z^{-1}q^{n-m+1/2}\right)\right]\\
  &\hskip226pt=2\left(q;q\right)_{\!\infty}^{\!2}\theta_{q}\left(-q^{1/2}z\right)\delta_{0,m},
 \end{align*}
 for all $z\in\C\setminus\{0\}$ and $m\in\Z$. However, this formula readily follows already from \eqref{eq:OG_rel_Ramnujan_first}.
 
 If $k+\ell\in2\Z+1$, then, by writing $n-(k+\ell+1)/2$ instead of $n$, $z$ instead of $z^{2}$ and $q$ instead of $q^{2}$ in \eqref{eq:OG_rel_Ramnujan_third}, one arrives at the relation
 \[
  \sum_{n=-\infty}^{\infty}\!q^{n(n-1)/2}\left[z^{n}A_{q}\!\left(zq^{n+m}\right)\!A_{q}\!\left(zq^{n-m-1}\right)-z^{1-n}A_{q}\!\left(z^{-1}q^{n+m}\right)\!A_{q}\!\left(z^{-1}q^{n-m-1}\right)\right]=0,
 \]
 for all $z\in\C\setminus\{0\}$ and $m\in\Z$.
\end{rem}

\section{Spectral analysis of operator $B$}
\label{sec:opB}

The linear operator $B=B(\alpha,q)$ determined by \eqref{eq:Be_n} is assumed to coincide with the discrete Schr{\" o}dinger operator
of the form
\[
 B=U+U^{*}+\alpha V
\]
where $U$ is the forward shift operator acting on $\ell^{2}(\mathbb{Z})$, i.e., $Ue_{n}=e_{n+1}$, for all $n\in\mathbb{Z}$, $U^{*}$ its adjoint, and 
$V$ is the self-adjoint operator determined by equalities $Ve_{n}=q^{-n}e_{n}$, for all $n\in\mathbb{Z}$. Since $U+U^{*}$ is bounded, the domain of $B$ 
coincides with the natural domain of $V$, i.e.,
\[
 \Dom B=\Dom V=\{\psi\in\ell^{2}(\Z)\mid V\psi\in\ell^{2}(\Z)\}.
\]
Note that $B$ is a sum of hermitian and self-adjoint operator, hence $B$ is self-adjoint.

If $\alpha=0$, then $B=U+U^{*}$ is an operator sometimes called discrete Laplacian on~$\Z$ or free discrete Schr{\" o}dinger operator. Its spectral properties are well-known. Recall
at least that
\[
 \spec(U+U^{*})=\spec_{ac}(U+U^{*})=[-2,2].
\]
The coupling parameter $\alpha$ is assumed to be real, however, it can be restricted even more. First, note that we may consider $\alpha>0$ since
$B(\alpha,q)$ and $-B(-\alpha,q)$ are unitarily equivalent. In addition, from \eqref{eq:Be_n}, it is clear that we may even assume, for example, $\alpha\in(q,1]$, otherwise it suffices 
to shift the index.

\subsection{The essential spectrum of $B$}

First, let us take a look at the essential part of the spectrum of $B$. In the following proof, for $\mathcal{A}\subset\mathbb{Z}$, we denote by $P_{\mathcal{A}}$ 
the orthogonal projection on the space spanned by $\{e_{n} \mid n\in\mathcal{A}\}$. Recall also $\Z_{+}=\{0,1,2,\dots\}$, $\N=\Z_{+}\setminus\{0\}$, and $\Z_{-}=-\Z_{+}$.

\begin{prop}\label{prop:ess_spec_B}
 It holds 
 \[
 \spec_{ess}(B)=[-2,2].
 \]
\end{prop}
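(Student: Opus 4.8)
The plan is to prove the two inclusions $\spec_{ess}(B)\supseteq[-2,2]$ and $\spec_{ess}(B)\subseteq[-2,2]$ separately, exploiting the fact that the potential $\alpha q^{-n}$ behaves very differently at the two ends of $\Z$: it decays to $0$ as $n\to+\infty$ and blows up as $n\to-\infty$. The decay at $+\infty$ is what produces the continuous spectrum of the free Laplacian, while the blow-up at $-\infty$ should contribute nothing to the essential spectrum because it confines the operator there.

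For the inclusion $[-2,2]\subseteq\spec_{ess}(B)$, I would construct a Weyl sequence for each $\lambda\in[-2,2]$. Write $\lambda=2\cos\vartheta$ with $\vartheta\in[0,\pi]$ and consider truncated, normalized plane waves $\psi^{(j)}_n = c_j\,e^{\ii n\vartheta}\chi_{[N_j,\,N_j+L_j]}(n)$ supported on long blocks pushed out to $+\infty$, i.e.\ with $N_j\to+\infty$ and block length $L_j\to\infty$. On such a block the potential satisfies $\alpha q^{-n}\le \alpha q^{-(N_j+L_j)}$, but since $q^{-n}\to 0$ as $n\to+\infty$ the potential is uniformly small on the block provided $N_j$ is large; one must choose $N_j$ growing fast enough relative to $L_j$ so that $\sup_{n\ge N_j}\alpha q^{-n}\to 0$. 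Then $\|(B-\lambda)\psi^{(j)}\|\to 0$ (the boundary terms from truncation are $O(1/\sqrt{L_j})$ as usual, and the potential term is $O(\alpha q^{-N_j})$), while $\|\psi^{(j)}\|=1$ and $\psi^{(j)}\to 0$ weakly since the supports escape to infinity. This exhibits $\lambda$ as an approximate eigenvalue with a singular Weyl sequence, hence $\lambda\in\spec_{ess}(B)$.

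For the reverse inclusion $\spec_{ess}(B)\subseteq[-2,2]$, I would use a decomposition argument of Weyl-theorem type analogous to the one already used for operator $A$. Split $\Z$ at some large negative index and write $B$ relative to $P_{\Z_{\le -M}}$ and $P_{\Z_{> -M}}$; the restriction to the left half-line $\Z_{\le -M}$ carries the divergent potential $\alpha q^{-n}\to+\infty$, so that piece is a Jacobi operator whose diagonal tends to $+\infty$ and which therefore has purely discrete spectrum, contributing nothing to $\spec_{ess}$. On the right piece the potential tends to $0$, so that restriction is a compact perturbation of $U+U^*$ restricted to a half-line, whose essential spectrum is $[-2,2]$. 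More cleanly, I would realize $\alpha V$ on the right tail as a genuinely compact (or at least relatively compact) perturbation: writing $B = (U+U^*) + \alpha V$ does not directly help because $\alpha V$ is unbounded, so instead I would compare resolvents, showing $(B-\ii)^{-1}-(U+U^*-\ii)^{-1}$ differs by a term that is compact after projecting away the finitely-supported high-potential region. The essential-spectrum stability then yields $\spec_{ess}(B)=\spec_{ess}(U+U^*)=[-2,2]$.

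The main obstacle I anticipate is the rigorous handling of the $-\infty$ end: because the potential is unbounded there, I cannot simply invoke the bounded-perturbation form of Weyl's theorem, and I must argue that the confining potential at $-\infty$ produces discrete spectrum and that the coupling across the cut is relatively compact with respect to $B$. The cleanest route is probably to verify directly that the difference of resolvents of $B$ and of the free operator, suitably arranged, is compact; establishing this compactness carefully—controlling both the decaying tail at $+\infty$ and the confining region at $-\infty$—is the technical heart of the proof, whereas the Weyl-sequence construction for the lower inclusion is routine.
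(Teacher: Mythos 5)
Your proposal contains a concrete directional error that makes several steps false as written: with $q\in(0,1)$ the potential $\alpha q^{-n}$ \emph{diverges} as $n\to+\infty$ and \emph{decays to $0$} as $n\to-\infty$ --- the opposite of what you assert at the outset. Consequently your Weyl sequence fails: the blocks $[N_j,N_j+L_j]$ pushed to $+\infty$ sit where $|\alpha|q^{-n}\to\infty$, so the potential term in $\|(B-\lambda)\psi^{(j)}\|$ blows up rather than vanishes (your inequality $\alpha q^{-n}\le\alpha q^{-(N_j+L_j)}$ is a bound by a quantity tending to infinity, and the claim ``$q^{-n}\to0$ as $n\to+\infty$'' is simply wrong). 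Likewise, in your half-line decomposition the confining side is the \emph{right} half-line and the asymptotically free side is the \emph{left} half-line, not the other way around. The error is consistent throughout, so the entire argument is repaired mechanically by the reflection $n\mapsto-n$ (equivalently, swapping the roles of $\pm\infty$); after that swap the proof is sound, but as it stands the key limits go the wrong way.

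After the orientation is fixed, your upper-inclusion argument is essentially the paper's proof. The paper decomposes $B=D+R+X$ with $D=P_{\Z_{-}}(U+U^{*})P_{\Z_{-}}+P_{\N}\,\alpha VP_{\N}$, $R=P_{\N}(U+U^{*})P_{\N}$, and $X$ compact ($X$ absorbs the decaying potential on $\Z_{-}$ and the finite-rank coupling across the cut); then, answering exactly the obstacle you flag in your last paragraph, it shows $R$ is $D$-compact via the explicit formula $R(D+\ii)^{-1}=P_{\N}(U+U^{*})(\alpha V+\ii)^{-1}P_{\N}$, compact because the diagonal entries $(\alpha q^{-n}+\ii)^{-1}$ vanish as $n\to+\infty$ --- no comparison of resolvents of $B$ with the free operator is needed, and this is cleaner than your proposed route since $(\alpha V+\ii)^{-1}$ is \emph{not} compact on all of $\ell^{2}(\Z)$ (its entries tend to $-\ii$ at the decaying end), which is precisely why the projection $P_{\N}$ matters. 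Two applications of the Weyl theorem then give $\spec_{ess}(B)=\spec_{ess}(D)=\spec_{ess}\bigl(P_{\Z_{-}}(U+U^{*})P_{\Z_{-}}\bigr)=[-2,2]$. Note also that these stability theorems yield \emph{equality} of essential spectra, so your separate Weyl-sequence construction for the inclusion $[-2,2]\subseteq\spec_{ess}(B)$, while routine and correct once relocated to the decaying end, is redundant in this scheme.
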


\begin{proof}
 First, decompose $B$ as $B=D+R+X$ where $D=P_{\Z_{-}}(U+U^{*})P_{\Z_{-}}+P_{\N}\,\alpha VP_{\N}$, $R=P_{\N}(U+U^{*})P_{\N}$, and $X=B-D-R$.
 Clearly, $X$ is a compact operator, thus
 \[
  \spec_{ess}(B)=\spec_{ess}(D+R),
 \]
 by the Weyl theorem. Further, since
 $$R(D+\ii)^{-1}=P_{\mathbb{N}}(U+U^{*})(\alpha V+\ii)^{-1}P_{\mathbb{N}}$$
 is a compact operator, $R$ is $D$-compact, and, by the Weyl theorem for relatively compact symmetric perturbations, see, for example \cite[Thm.~9.9]{Weidmann}, we have
 \[
  \spec_{ess}(D+R)=\spec_{ess}(D).
 \]
 Finally, it suffices to note that 
 \[
 \spec_{ess}(D)=\spec_{ess}(P_{\mathbb{Z}_{-}}(U+U^{*})P_{\mathbb{Z}_{-}})=[-2,2].
 \]
 The last equality is a well-known fact.
\end{proof}

\subsection{Solutions of the eigenvalue equation and their asymptotics}

The main aim of this subsection is to provide formulas for solutions of the eigenvalue equation for operator $B$ in terms of basic hypergeometric series
and analyze their asymptotic behavior as the index tends to $\pm\infty$. This will be essential for the later detailed spectral analysis of $B$.

We distinguish two regions for the spectral variable. First, the case when the spectral variable ranges $\R\setminus[-2,2]$, i.e., is
outside the essential spectrum of $B$, and when is within the essential spectrum of $B$.
For this purpose, it is convenient to consider the eigenvalue equation for operator $B$ in the following form:
\begin{equation}
  v_{n-1}+\left(\alpha q^{-n}-\mu(z)\right)v_{n}+v_{n+1}=0,\quad n\in\Z,
\label{eq:eigen_eq_B}
\end{equation}
where $\mu(z)=z+z^{-1}$ is the Joukowski conformal map. Note that $\mu$ maps $\{z\in\C \mid 0<|z|<1\}$ bijectively onto $\C\setminus[-2,2]$ and 
$\{e^{\ii\theta} \mid \theta\in[0,\pi]\}$ bijectively onto $[-2,2]$.

Let us denote the regularized confluent basic hypergeometric ${}_{1}\phi_{1}$ series by
\begin{equation}
 {}_{1}\tilde{\phi}_{1}(a;b;q,z):=(b;q)_{\infty}\,{}_{1}\phi_{1}(a;b;q,z).
\label{eq:def_1phi1_regul} 
\end{equation}
This function is well defined and analytic in all variables $a,b,z\in\C$.
Two solutions of the equation \eqref{eq:eigen_eq_B} are $\{f_{n}\}_{n=-\infty}^{\infty}$ and $\{g_{n}\}_{n=-\infty}^{\infty}$ where
\begin{equation}
  f_{n}=f_{n}(z):=(-1)^{n}\alpha^{-n}q^{\frac{1}{2}n(n+1)}
  \,_{1}\tilde{\phi}_{1}(0;z^{-1}\alpha^{-1}q^{n+1};q,z\alpha^{-1}q^{n+1})
\label{eq:sol_f}
\end{equation}
and
\begin{equation}
  g_{n}=g_{n}(z):=z^{-n}\,_{1}\tilde{\phi}_{1}(0;z\alpha q^{1-n};q,qz^{2}).
 \label{eq:sol_g}
\end{equation}
This statement can be verified by somewhat lengthy but straightforward computation.
Functions \eqref{eq:sol_f} and \eqref{eq:sol_g} are well defined for all $z,\alpha\in\C\setminus\{0\}$.

\begin{rem}\label{rem:symmet_sol}
Due to the symmetry relation
\[
 {}_{1}\tilde{\phi}_{1}(0;a;q,z)={}_{1}\tilde{\phi}_{1}(0;z;q,a),
\]
see \cite[Prop.~2.1]{KoornwinderSwarttouw}, the function $f_{n}=f_{n}(z)$ is invariant under the exchange $z\leftrightarrow z^{-1}$. On the other hand,
the second solution $g_{n}=g_{n}(z)$ does not possess such a symmetry. Since the equation \eqref{eq:eigen_eq_B}
is also invariant under $z\leftrightarrow z^{-1}$, the sequence $\{\tilde{g}_{n}\}_{n=-\infty}^{\infty}$, where $\tilde{g}_{n}=\tilde{g}_{n}(z):=g_{n}(z^{-1})$, is another solution
of the equation \eqref{eq:eigen_eq_B}.
\end{rem}

\begin{prop}\label{prop:two_sol_asympt_Wronks}
 For the Wronskian $W(f,g)=f_{n+1}g_{n}-f_{n}g_{n+1}$, where $f$ and $g$ are defined in \eqref{eq:sol_f} and \eqref{eq:sol_g},
 one has
 \begin{equation}
    W(f,g)=-z^{-1}\theta_{q}\left(\alpha z\right)\!.
 \label{eq:Wronsk_f_g}\end{equation}
 Consequently, functions $f$ and $g$ are two linearly independent solutions of \eqref{eq:eigen_eq_B} if and only if $\alpha\neq0$ and 
 $z\notin\alpha^{-1}q^{\mathbb{Z}}\cup\{0\}$.
 Moreover, asymptotic relations for $f_{n}$ and $g_{n}$ are as follows:
 \begin{align}
  f_{n}&=(-1)^{n}\alpha^{-n}q^{\frac{1}{2}n(n+1)}\left[1+o(1)\right]\!,\label{eq:f_n_asympt_+inf}\\
  g_{n}&=(-1)^{n}\alpha^{n}q^{-\frac{1}{2}n(n-1)}\left[\theta_{q}\!\left(z^{-1}\alpha^{-1}\right)+o(1)\right]\!,\label{eq:g_n_asympt_+inf}
 \end{align}
 as $n\rightarrow\infty$, and
 \begin{align}
  f_{n}&=z^{n}\left[\frac{\theta_{q}(\alpha z)}{(z^{2};q)_{\infty}}+o(1)\right]\!,\label{eq:f_n_asympt_-inf_|x|<1}\\
  g_{n}&=z^{-n}\left[(qz^{2};q)_{\infty}+o(1)\right]\!,\label{eq:g_n_asympt_-inf}
 \end{align}
 as $n\rightarrow-\infty$, however, the formula \eqref{eq:f_n_asympt_-inf_|x|<1} is true under the additional assumption $0<|z|<1$.
\end{prop}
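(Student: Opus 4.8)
The plan is to read all four asymptotic relations off the explicit series \eqref{eq:sol_f}, \eqref{eq:sol_g} and then to obtain the Wronskian \eqref{eq:Wronsk_f_g} by evaluating a limit. Two of the asymptotics are immediate from the analyticity of the regularized series \eqref{eq:def_1phi1_regul} in all of its variables. For \eqref{eq:f_n_asympt_+inf}, as $n\to\infty$ both the parameter $z^{-1}\alpha^{-1}q^{n+1}$ and the argument $z\alpha^{-1}q^{n+1}$ tend to $0$, so by joint continuity the regularized factor tends to ${}_1\tilde{\phi}_1(0;0;q,0)=1$. For \eqref{eq:g_n_asympt_-inf}, as $n\to-\infty$ the parameter $z\alpha q^{1-n}$ tends to $0$ while the argument $qz^2$ is fixed, whence the regularized factor tends to ${}_1\tilde{\phi}_1(0;0;q,qz^2)=(qz^2;q)_\infty$ (using $(0;q)_\infty=1$ and Euler's identity).

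The real work is \eqref{eq:g_n_asympt_+inf}, where as $n\to\infty$ the parameter $z\alpha q^{1-n}$ blows up. I would first establish the large-parameter asymptotics
\[
 {}_1\tilde{\phi}_1(0;b;q,w)=\theta_q(b)\,(1+o(1)),\qquad b\to\infty,
\]
for fixed $w$. Writing ${}_1\tilde{\phi}_1(0;b;q,w)=\sum_{k\ge0}\frac{(bq^k;q)_\infty}{(q;q)_k}(-1)^kq^{\binom{k}{2}}w^k$ and using $(bq^k;q)_\infty=\theta_q(bq^k)/(q^{1-k}b^{-1};q)_\infty$ together with the quasi-periodicity \eqref{eq:theta_power_q_id}, one factors out $\theta_q(b)$ and reduces the claim to
\[
 \frac{{}_1\tilde{\phi}_1(0;b;q,w)}{\theta_q(b)}=\sum_{k\ge0}\frac{(w/b)^k}{(q^{1-k}b^{-1};q)_\infty\,(q;q)_k}\longrightarrow1,\qquad b\to\infty,
\]
since the $k=0$ summand tends to $1$ and each higher summand tends to $0$. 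Substituting $b=z\alpha q^{1-n}$, a second application of \eqref{eq:theta_power_q_id} (with exponent $-n$ and base $z\alpha q$) produces the prefactor $(-1)^n\alpha^nq^{-n(n-1)/2}$, and the identity $\theta_q(z\alpha q)=\theta_q(z^{-1}\alpha^{-1})$, which follows from \eqref{eq:theta_recip_arg} and \eqref{eq:theta_power_q_id}, rewrites the constant into the form stated in \eqref{eq:g_n_asympt_+inf}. The main obstacle is the rigorous justification of the term-by-term passage to the limit in the last display: the factors $(q^{1-k}b^{-1};q)_\infty$ in the denominator are not bounded away from zero uniformly in $k$, so one must split the sum near $k\approx\log_{1/q}|b|$ and control the tail by a dominated-convergence estimate.

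With \eqref{eq:f_n_asympt_+inf} and \eqref{eq:g_n_asympt_+inf} in hand, the Wronskian follows from the fact that $W(f,g)=f_{n+1}g_n-f_ng_{n+1}$ is independent of $n$ (both $f,g$ solve the second-order recurrence \eqref{eq:eigen_eq_B}, whose off-diagonal entries equal $1$). Letting $n\to\infty$, the product $f_{n+1}g_n$ carries a factor $q^{2n+1}\to0$ and hence vanishes, while $f_ng_{n+1}\to-\alpha\,\theta_q(z^{-1}\alpha^{-1})$; therefore $W(f,g)=\alpha\,\theta_q(z^{-1}\alpha^{-1})=-z^{-1}\theta_q(\alpha z)$ by \eqref{eq:theta_recip_arg}, which is \eqref{eq:Wronsk_f_g}. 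Since the zeros of $\theta_q$ are exactly $q^{\Z}$ (read off from \eqref{eqdeftheta}), $W(f,g)\ne0$ precisely when $\alpha\ne0$ and $z\notin\alpha^{-1}q^{\Z}\cup\{0\}$, which is the asserted linear independence criterion.

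Finally, for \eqref{eq:f_n_asympt_-inf_|x|<1} I would avoid the delicate regime where both the parameter and the argument of the ${}_1\tilde{\phi}_1$ in \eqref{eq:sol_f} diverge, and instead use the connection-coefficient method. By Remark~\ref{rem:symmet_sol}, $\tilde{g}_n:=g_n(z^{-1})$ is a second solution of \eqref{eq:eigen_eq_B}, and the argument giving \eqref{eq:g_n_asympt_-inf} yields $\tilde{g}_n=z^n[(qz^{-2};q)_\infty+o(1)]$ as $n\to-\infty$. A short computation of the constant Wronskian $W(\tilde{g},g)$ from these $-\infty$ asymptotics gives $W(\tilde{g},g)=-z^{-1}\theta_q(z^2)$. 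For $z$ with $\theta_q(z^2)\ne0$ the pair $g,\tilde{g}$ is a basis, so $f=c_1g+c_2\tilde{g}$ with $c_2=W(f,g)/W(\tilde{g},g)=\theta_q(\alpha z)/\theta_q(z^2)$. When $0<|z|<1$ the solution $\tilde{g}$ dominates $g$ at $-\infty$: factoring out $z^n$, the recessive contribution $c_1g_n=z^n\cdot c_1z^{-2n}[(qz^2;q)_\infty+o(1)]$ is $o(z^n)$, and hence $f_n=z^n[c_2(qz^{-2};q)_\infty+o(1)]=z^n\big[\theta_q(\alpha z)/(z^2;q)_\infty+o(1)\big]$, using $\theta_q(z^2)=(z^2;q)_\infty(qz^{-2};q)_\infty$. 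This is exactly \eqref{eq:f_n_asympt_-inf_|x|<1}; the hypothesis $0<|z|<1$ is precisely what makes the recessive term negligible, and the isolated values with $\theta_q(z^2)=0$ are covered by continuity in $z$.
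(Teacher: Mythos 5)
Your treatment of \eqref{eq:f_n_asympt_+inf}, \eqref{eq:g_n_asympt_-inf}, the Wronskian, and the independence criterion is correct and essentially identical to the paper's: the paper also reads the first two limits off the series (via continuity of ${}_{1}\tilde{\phi}_{1}$ and ${}_{0}\phi_{0}(-;-;q,w)=(w;q)_{\infty}$), computes $W(f,g)$ as $\lim_{n\to\infty}(f_{n+1}g_{n}-f_{n}g_{n+1})$ from the $+\infty$ asymptotics, and converts $\alpha\,\theta_{q}(z^{-1}\alpha^{-1})$ into $-z^{-1}\theta_{q}(\alpha z)$ by \eqref{eq:theta_recip_arg}. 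Where you genuinely diverge is \eqref{eq:f_n_asympt_-inf_|x|<1}: the paper rearranges the series for $f_{n}$ into $z^{n}\sum_{k}(q^{-k}z\alpha;q)_{-n}(q^{k+1}z^{-1}\alpha^{-1};q)_{\infty}\,q^{k(k+1)/2}(-1)^{k}\alpha^{-k}z^{k}/(q;q)_{k}$, takes $n\to-\infty$ termwise, and sums the resulting ${}_{1}\phi_{0}$ by \eqref{eq:1phi0_eq_qpoch} --- which is exactly where the restriction $|z|<1$ enters there. Your connection-coefficient argument (expand $f$ in the basis $g,\tilde{g}$, fix $c_{2}=W(f,g)/W(\tilde{g},g)=\theta_{q}(\alpha z)/\theta_{q}(z^{2})$, observe the recessive term is $o(z^{n})$) is sound, non-circular (you get $c_{2}$ from the Wronskians, not from the asymptotics being proved), and in effect anticipates the paper's own Lemma~\ref{lem:connect_formula}; it is arguably more structural, at the price of excluding $z^{2}\in q^{\Z}$.

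Two soft spots deserve attention. First, your normalization of ${}_{1}\tilde{\phi}_{1}(0;b;q,w)$ by $\theta_{q}(b)$ collapses when $z\in\alpha^{-1}q^{\Z}$: then $b_{n}=z\alpha q^{1-n}\in q^{\Z}$ and $\theta_{q}(b_{n})=0$ for \emph{every} $n$, so the reduced sum is undefined precisely at the spectral points exploited later in Proposition~\ref{prop:spec_B}; note that \eqref{eq:g_n_asympt_+inf} still asserts something there, namely $g_{n}=o(\alpha^{n}q^{-n(n-1)/2})$. The paper's rearrangement, $g_{n}=(-1)^{n}\alpha^{n}q^{-n(n-1)/2}\sum_{k}(q^{-k}z^{-1}\alpha^{-1};q)_{n}(q^{k+1}z\alpha;q)_{\infty}\,q^{k(k+1)/2+kn}(-1)^{k}z^{2k}/(q;q)_{k}$, keeps only finite $q$-shifted factorials, involves no division, and covers that case uniformly --- your acknowledged splitting/domination estimate is then also easier, since the problematic denominators $(q^{1-k}b^{-1};q)_{\infty}$ never appear. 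Second, ``covered by continuity in $z$'' is not by itself a proof at the excluded points $\theta_{q}(z^{2})=0$: pointwise continuity of each $f_{n}$ does not transfer an asymptotic relation, and the $o(1)$ your decomposition produces is \emph{not} locally uniform near such $z$, because $c_{1}(z)=W(f,\tilde{g})/W(g,\tilde{g})$ blows up as $W(g,\tilde{g})=z^{-1}\theta_{q}(z^{2})\to0$. You would need local uniform boundedness of $z^{-n}f_{n}(z)$ plus a Vitali/normal-family argument to pass to those $z$ --- or simply run the paper's direct computation, which is insensitive to $\theta_{q}(z^{2})=0$. Both gaps are repairable, but as written they leave the degenerate parameter values unproved.
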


\begin{proof}
 Derivation of the asymptotic formula \eqref{eq:f_n_asympt_+inf} is a matter of straightforward
 computation using the very definitions \eqref{eq:sol_f} and \eqref{eq:def_1phi1_regul}. To verify \eqref{eq:g_n_asympt_+inf},
 it suffices to write $g_{n}$ in the form
 \[
  g_{n}=(-1)^{n}\alpha^{n}q^{-\frac{1}{2}n(n-1)}\sum_{k=0}^{\infty}\left(q^{-k}z^{-1}\alpha^{-1};q\right)_{n}\left(q^{k+1}z\alpha;q\right)_{\!\infty}\frac{q^{\frac{1}{2}k(k+1)+kn}}{(q;q)_{k}}(-1)^{k}z^{2k}
 \]
 where we used that
 \[
  \left(z\alpha q^{1-n+k};q\right)_{\infty}=(-1)^{n}\alpha^{n}z^{n}q^{-\frac{1}{2}n(n-1)+kn}\left(q^{-k}z^{-1}\alpha^{-1};q\right)_{n}\left(q^{k+1}z\alpha;q\right)_{\!\infty}.
 \]
 Expansion \eqref{eq:g_n_asympt_+inf} now immediately follows.
 
 Having the expansions \eqref{eq:f_n_asympt_+inf} and \eqref{eq:g_n_asympt_+inf}, 
 the formula for the Wronskian \eqref{eq:Wronsk_f_g} follows from the independence of  $W(f,g)$ on $n$ using 
 \[
 W(f,g)=\lim_{n\rightarrow\infty}\left(f_{n+1}g_{n}-f_{n}g_{n+1}\right)
 \]
 together with the identity \eqref{eq:theta_recip_arg}.
 
 The asymptotic formula \eqref{eq:g_n_asympt_-inf} readily follows from definitions \eqref{eq:sol_g} and \eqref{eq:def_1phi1_regul} 
 with the aid of the identity \cite[Eq.~(1.3.16)]{GasperRahman}
 \[
 {}_{0}\phi_{0}(-;-;q,z)=(z;q)_{\infty}
 \]
 which holds for all $z\in\mathbb{C}$. 
 
 Finally, assuming $n<0$ and using that
 \[
  \left(z^{-1}\alpha^{-1}q^{1+n+k};q\right)_{\infty}=(-1)^{n}\alpha^{n}z^{n}q^{-\frac{1}{2}n(n+1)-kn}\left(q^{-k}z\alpha;q\right)_{-n}\left(q^{k+1}z^{-1}\alpha^{-1};q\right)_{\!\infty},
 \]
 we can express $f_{n}$ in the form
 \[
  f_{n}=z^{n}\sum_{k=0}^{\infty}\left(q^{-k}z\alpha;q\right)_{-n}\left(q^{k+1}z^{-1}\alpha^{-1};q\right)_{\!\infty}\frac{q^{\frac{1}{2}k(k+1)}}{(q;q)_{k}}(-1)^{k}\alpha^{-k}z^{k}.
 \]
 Hence
 \[
  \lim_{n\to\infty}z^{-n}f_{n}=\sum_{k=0}^{\infty}\theta_{q}\left(q^{-k}z\alpha\right)\frac{q^{\frac{1}{2}k(k+1)}}{(q;q)_{k}}(-1)^{k}\alpha^{-k}z^{k}.
 \]
 To evaluate the sum on the RHS in the last equality and arrive at \eqref{eq:f_n_asympt_-inf_|x|<1}, one has to use \eqref{eq:theta_power_q_id} first and then apply the identity \cite[Eq.~(1.3.15)]{GasperRahman}
 \begin{equation}
 {}_{1}\phi_{0}(0;-;q,z)=\frac{1}{(z;q)_{\infty}}
 \label{eq:1phi0_eq_qpoch}
 \end{equation}
 which holds true for $|z|<1$ only. This is the reason for additional assumption concerning the validity of \eqref{eq:f_n_asympt_-inf_|x|<1}.
\end{proof}

To complete the picture of the asymptotic behavior of the solutions \eqref{eq:sol_f} and \eqref{eq:sol_g}, we need to investigate the 
asymptotic behavior of $f_{n}(e^{\ii\phi})$, as $n\rightarrow-\infty$, for $\phi\in[0,\pi]$. This is, however, a bit more delicate task. 
Recall that, for the upcoming spectral analysis of $B$, it is sufficient to restrict the range of parameter $\alpha$ to $(q,1]$. This is already considered 
in the following lemma.

\begin{lem}\label{lem:asympt_aux}
The following asymptotic formulas hold true for $n\to\infty$.
\begin{enumerate}[{\upshape i)}]
  \item If $|z|=1$, $\Im z>0$, and $\xi\in[q,1)$, then
  \[
   {}_{1}\tilde{\phi}_{1}\left(0;z^{-1}\xi q^{-n};q,z\xi q^{-n}\right)=(-z\xi)^{n}q^{-\frac{1}{2}n(n+1)}
   \left[A+z^{-2n}B+o(1)\right]
  \]
  where
  \[
   A=\frac{\left(z^{-1}\xi;q\right)_{\infty}}{(q;q)_{\infty}}\left(\frac{z^{2}}{z^{2}-1}{}_{1}\phi_{1}\left(q;qz^{-2};q,qz^{-1}\xi^{-1}\right)
   +{}_{1}\phi_{1}\left(q;z^{-1}\xi;q,z\xi\right)-1\right)
  \]
  and
  \[
   B=\frac{\theta_{q}\!\left(z^{-1}\xi\right)}{\left(z^{2};q\right)_{\infty}}.
  \]
 \item
 \[
  {}_{1}\tilde{\phi}_{1}(0;\xi q^{-n};q,\xi q^{-n})=\begin{cases}
                                                     (-\xi)^{n}q^{-\frac{1}{2}n(n+1)}
                                                     \left[\frac{\theta_{q}\left(\xi\right)}{\left(q;q\right)_{\infty}}n+O(1)\right]\!, \; & \xi\in(q,1),\\
                                                     (-1)^{n}q^{-\frac{1}{2}n(n-1)}\left[\left(q;q\right)_{\infty}+o(1)\right]\!, \; & \xi=q.
                                                    \end{cases}
 \]
 \item If $\xi\in[q,1)$, then
 \[
  {}_{1}\tilde{\phi}_{1}(0;-\xi q^{-n};q,-\xi q^{-n})=\xi^{n}q^{-\frac{1}{2}n(n+1)}
                                                     \left[\frac{\theta_{q}\left(-\xi\right)}{\left(q;q\right)_{\infty}}n+O(1)\right]\!.
 \]
\end{enumerate}
\end{lem}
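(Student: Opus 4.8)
The plan is to recognize Lemma~\ref{lem:asympt_aux} as nothing but the missing $-\infty$ asymptotics of the Jost-type solution $f$ on the boundary $|z|=1$ of the essential spectrum, the regime left open by \eqref{eq:f_n_asympt_-inf_|x|<1}. Indeed, fixing $\xi\in[q,1)$ and setting $\alpha=q\xi^{-1}\in(q,1]$, a direct comparison of \eqref{eq:sol_f} with the series in the lemma gives the exact identity
\[
{}_{1}\tilde{\phi}_{1}\!\left(0;z^{-1}\xi q^{-n};q,z\xi q^{-n}\right)=(-1)^{n}(\xi/q)^{n}q^{-\frac{1}{2}n(n-1)}f_{-n}(z),
\]
and likewise for the confluent series in ii) and iii) with $z=1$ and $z=-1$, respectively. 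Thus the whole lemma reduces to determining the behaviour of $f_{j}(z)$ as $j\to-\infty$ for $z$ on the closed upper unit semicircle; translating the resulting expansions back through the displayed prefactor is then routine bookkeeping.

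For part i), where $|z|=1$ and $\Im z>0$, I would avoid a direct series analysis and instead use connection coefficients in the basis $\{g,\tilde g\}$, where $\tilde g_{n}(z)=g_{n}(z^{-1})$ is the reflected solution of Remark~\ref{rem:symmet_sol}. First I would compute the two missing Wronskians: from the $-\infty$ asymptotics \eqref{eq:g_n_asympt_-inf} of $g$ and $\tilde g$ together with \eqref{eq:theta_recip_arg} one gets $W(g,\tilde g)=z^{-1}\theta_{q}(z^{2})$, while the symmetry $f_{n}(z)=f_{n}(z^{-1})$ applied to \eqref{eq:Wronsk_f_g} yields $W(f,\tilde g)=-z\,\theta_{q}(\alpha z^{-1})$. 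On the open semicircle one has $z^{2}\notin q^{\Z}$, hence $\theta_{q}(z^{2})\neq0$, so $g,\tilde g$ are independent and $f=c_{+}\tilde g+c_{-}g$ with $c_{+}=\theta_{q}(\alpha z)/\theta_{q}(z^{2})$ and $c_{-}=-z^{2}\theta_{q}(\alpha z^{-1})/\theta_{q}(z^{2})$ obtained by Cramer's rule. Substituting \eqref{eq:g_n_asympt_-inf} for $g$ and $\tilde g$ produces the genuine two-scale expansion
\[
f_{-n}(z)=c_{+}\left(qz^{-2};q\right)_{\infty}z^{-n}+c_{-}\left(qz^{2};q\right)_{\infty}z^{n}+o(1),\qquad n\to\infty,
\]
the error being $o(1)$ because $|z^{\pm n}|=1$. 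Reading off the coefficients gives $B$ at once (using $\theta_{q}(\alpha z)=\theta_{q}(z^{-1}\xi)$ and $\theta_{q}(z^{2})=(z^{2};q)_{\infty}(qz^{-2};q)_{\infty}$), while $A$ emerges in the closed form $A=\frac{z^{2}}{z^{2}-1}\,\theta_{q}(z\xi)/(qz^{-2};q)_{\infty}$; matching this with the expression in the statement is a $q$-series identity which I would verify by comparing the simple poles at $z^{2}=1$ and using the evaluation ${}_{1}\phi_{1}(q;q;q,w)=(w;q)_{\infty}$.

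Parts ii) and iii) are then the confluent limits $z\to\pm1$, where the two scales $z^{\pm n}$ coalesce and $\theta_{q}(z^{2})=\theta_{q}(1)=0$ makes the basis $\{g,\tilde g\}$ degenerate. The key point is that $A$ and $B$ each acquire a simple pole at $z^{2}=1$ with opposite residues $\pm\theta_{q}(\pm\xi)/(q;q)_{\infty}$, so that in the finite combination one meets the quotient $\frac{1-z^{-2n}}{z^{2}-1}$, which tends to $n$ as $z\to\pm1$; this is exactly what generates the linear-in-$n$ term with coefficient $\theta_{q}(\pm\xi)/(q;q)_{\infty}$. For iii) one has $\theta_{q}(-\xi)\neq0$ for all $\xi\in[q,1)$, so this accounts for the whole formula. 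For ii) the same works when $\xi\in(q,1)$, whereas at the endpoint $\xi=q$ one has $\theta_{q}(q)=(q,1;q)_{\infty}=0$; the linear term then vanishes, the scale drops to $q^{-\frac{1}{2}n(n-1)}$, and this borderline case has to be computed separately, most easily by evaluating ${}_{1}\tilde{\phi}_{1}(0;q^{1-n};q,q^{1-n})$ (that is $f_{-n}(1)$ at $\alpha=1$) directly with the help of ${}_{0}\phi_{0}(-;-;q,z)=(z;q)_{\infty}$.

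I expect the main obstacle to lie precisely in parts ii) and iii): the expansion of part i) carries a remainder that is $o(1)$ only for fixed $z$ and degrades as $z$ approaches the resonant points $\pm1$, so the interchange of the limits $z\to\pm1$ and $n\to\infty$ is not automatic. The clean remedy is not to pass to a limit at all but to repeat the connection-coefficient computation directly at $z=\pm1$ in the resonant basis of solutions behaving like $1$ and like $n$ at $-\infty$ (the double root $z=1$ of the free equation $v_{n-1}-2v_{n}+v_{n+1}=0$ to which \eqref{eq:eigen_eq_B} degenerates as $n\to-\infty$), tracking the coefficient of the $n$-term. Securing this degenerate expansion, together with the separate $\xi=q$ endpoint, is the crux; the remaining manipulations are theta-function bookkeeping of the kind already used repeatedly above.
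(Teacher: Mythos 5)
Your route is sound and genuinely different from the paper's. The paper never leaves the level of the $q$-series: it splits the sum defining ${}_{1}\tilde{\phi}_{1}$ at $k=n$, isolates the partial sums $P_{n}$ of \eqref{eq:def_P_n}, computes the generating function \eqref{eq:gener_P_n}, and applies Darboux's method; the two simple poles of $\Psi$ at $t=1$ and $t=z^{2}$ give the two-scale expansion of part i), the double pole at $t=1$ for $z=\pm1$ produces the linear-in-$n$ terms of ii) and iii), and at the endpoint $\xi=q$, $z=1$, the first sum in \eqref{eq:tow_asym_aux} vanishes identically, which is the paper's version of your separate borderline computation. You instead work at the level of the difference equation \eqref{eq:eigen_eq_B}: the reduction ${}_{1}\tilde{\phi}_{1}\left(0;z^{-1}\xi q^{-n};q,z\xi q^{-n}\right)=(-\xi)^{n}q^{-\frac{1}{2}n(n+1)}f_{-n}(z)$ with $\alpha=q\xi^{-1}$ is exactly right (it is how the paper itself uses the lemma in Proposition~\ref{prop:asympt_f_circle}), your Wronskians $W(f,\tilde g)=-z\,\theta_{q}\left(\alpha z^{-1}\right)$ and $W(g,\tilde g)=z^{-1}\theta_{q}\left(z^{2}\right)$ are correct and are obtained without circularity (neither \eqref{eq:Wronsk_f_g} nor \eqref{eq:g_n_asympt_-inf} depends on the lemma), and inserting \eqref{eq:g_n_asympt_-inf} gives the stated $B$ and the closed form $A=\frac{z^{2}}{z^{2}-1}\,\theta_{q}(z\xi)/\left(qz^{-2};q\right)_{\infty}$. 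Your diagnosis of ii)--iii) is also the right one, and the resonant plan works concretely: take $h:=\partial_{z}g(z)\big|_{z=\pm1}$, which solves the equation at $z=\pm1$ precisely because $\mu'(\pm1)=0$; differentiating $W(g,\tilde g)=z^{-1}\theta_{q}\left(z^{2}\right)$ at $z=\pm1$ gives $W(g,h)=(q;q)_{\infty}^{2}$, and the coefficient $W(f,g)/W(h,g)$ combined with $h_{n}=-nz^{-n-1}(q;q)_{\infty}+O(1)$ reproduces the slopes $\theta_{q}(\pm\xi)/(q;q)_{\infty}$ (using $\theta_{q}\left(\pm q\xi^{-1}\right)=\theta_{q}(\pm\xi)$), including the correct signs in iii). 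What your approach buys is a structural explanation of the $n$-growth as a spectral resonance and a simpler constant; what the paper's approach buys is that Darboux's residues deliver $A$ directly in the ${}_{1}\phi_{1}$ form of the statement, with no further identity needed.

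That last point is the one genuine gap in your proposal: you must still prove that your closed form of $A$ equals the stated combination $\frac{\left(z^{-1}\xi;q\right)_{\infty}}{(q;q)_{\infty}}\left(\frac{z^{2}}{z^{2}-1}{}_{1}\phi_{1}\left(q;qz^{-2};q,qz^{-1}\xi^{-1}\right)+{}_{1}\phi_{1}\left(q;z^{-1}\xi;q,z\xi\right)-1\right)$, and "comparing the simple poles at $z^{2}=1$" together with ${}_{1}\phi_{1}(q;q;q,w)=(w;q)_{\infty}$ is only a consistency check, not a proof: two functions with matching poles and residues can differ by a regular summand. You also cannot invoke uniqueness of the coefficients in the two-scale expansion, since that would presuppose the stated expansion --- the very thing being proved. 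A genuine proof is available but takes work, e.g.\ completing ${}_{1}\phi_{1}\left(q;z^{-1}\xi;q,z\xi\right)$ to a bilateral series and summing by a limit case of Ramanujan's ${}_{1}\psi_{1}$ formula \cite[Eq.~(5.2.1)]{GasperRahman}, in the spirit of how the paper sums \eqref{eq:xi_laurent_ser}; note that the naive bilateral completion diverges on $|z|=1$ (its negative tail is $\sum_{m\geq1}z^{-2m}\left(qz\xi^{-1};q\right)_{m}$, whose terms do not tend to zero), so one must first work at $|z|>1$ or in $\xi$ and continue analytically. Either carry out such an argument, or be explicit that you are proving the lemma with your (equivalent, cleaner) form of $A$ --- which would in fact suffice for every later use of the lemma in the paper, since Propositions~\ref{prop:asympt_f_circle} and \ref{prop:no_sol_ell2_embedded} only need boundedness and non-vanishing of the coefficients.
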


\begin{proof}
By the definition \eqref{eq:def_1phi1_regul}, one has
$${}_{1}\tilde{\phi}_{1}\left(0;z^{-1}\xi q^{-n};q,z\xi q^{-n}\right)=\sum_{k=0}^{\infty}\frac{\left(z^{-1}\xi q^{-n+k};q\right)_{\infty}}{(q;q)_{k}}q^{\binom{k}{2}-nk}(-z\xi)^{k}.$$
Next, we split the sum into two sums where the summation index of the first sum ranges from $0$ to $n-1$, while the index of the second one
goes from $n$ to $\infty$. Further, we make a substitution in the first sum by introducing new summation index $j:=n-k$ and
we shift the summation index of the second sum by $n$. Finally, by making some obvious algebraic manipulations, one arrives
at the expression
\begin{equation}
(-z\xi)^{n}q^{-\frac{1}{2}n(n+1)}\left(z^{-1}\xi;q\right)_{\infty}\!
\left[\sum_{j=1}^{n}\frac{(qz\xi^{-1};q)_{j}}{(q;q)_{n-j}}z^{-2j}
+\sum_{k=0}^{\infty}\frac{q^{k(k-1)/2}}{\left(z^{-1}\xi;q\right)_{k}(q;q)_{k+n}}(-z\xi)^{k}\right]\!\!.
\label{eq:tow_asym_aux}\end{equation}
Note the second sum in the square brackets in \eqref{eq:tow_asym_aux} has a finite limit when 
$n\rightarrow\infty$ equal to
\begin{equation}
 \frac{1}{(q;q)_{\infty}}\,_{1}\phi_{1}\left(q;z^{-1}\xi;q,z\xi\right)\!.
\label{eq:tow_asym_second_term_limit}
\end{equation}

Thus, in the following part, we investigate the asymptotic behavior of the first sum.
Let us denote
\begin{equation}
 P_{n}=P_{n}(z,\xi;q):=\sum_{j=0}^{n}\frac{(qz\xi^{-1};q)_{j}}{(q;q)_{n-j}}z^{-2j}.
 \label{eq:def_P_n}
\end{equation}
Note the summation index ranges $0\leq j \leq n$, while the index of the first sum in \eqref{eq:tow_asym_aux}
ranges $1\leq j \leq n$.

Next, we derive the generating function formula for $P_{n}$ given by \eqref{eq:def_P_n}. For $t\in\C$, $|t|<1$, one has
\[
 \sum_{n=0}^{\infty}P_{n}t^{n}=\sum_{l=0}^{\infty}\frac{t^{l}}{(q;q)_{l}}\sum_{k=0}^{\infty}\left(qz\xi^{-1};q\right)_{k}z^{-2k}t^{k}
 =\frac{1}{(t;q)_{\infty}}\,_{2}\phi_{1}\left(qz\xi^{-1},q;0;q,z^{-2}t\right)
\]
where the identity \eqref{eq:1phi0_eq_qpoch} was used. A limit case
of Heine's transformation formulas for $\,_{2}\phi_{1}$, see \cite[Sec.~1.4]{GasperRahman}, yields
\[
 \,_{2}\phi_{1}\left(a,b;0;q,z\right)=\frac{(bz;q)_{\infty}}{(z;q)_{\infty}}\,_{1}\phi_{1}\left(b;bz;q,az\right)\!.
\]
With the aid of this formula, we arrive at the generating function formula of the form
\begin{equation}
 \sum_{n=0}^{\infty}P_{n}t^{n}=\frac{1}{(1-z^{-2}t)(t;q)_{\infty}}\,_{1}\phi_{1}\left(q;qz^{-2}t;q,qz^{-1}\xi^{-1}t\right)=:\Psi(t).
\label{eq:gener_P_n}
\end{equation}
This formula is suitable for the application of Daurboux's method, see \cite[Section~8.9]{Olver}. Note that, if $|z|=1$, the singularities of $\Psi$
located most closely to the origin are $t=1$ and $t=z^{2}$.

Proof of assertion (i): If $|z|=1$, $\Im z>0$, the function $\Psi$ has two simple poles
located on the unit circle at $t=1$ and $t=z^{2}$. Thus, the application of Darboux's method with the comparison function
\[
 h(t):=\frac{a}{t-1}+\frac{b}{t-z^{2}}=\sum_{n=0}^{\infty}\left(-a-\frac{b}{z^{2n+2}}\right)t^{n},
\]
where
\[
 a=\Res(\Psi,1)=-\frac{1}{(1-z^{-2})(q;q)_{\infty}}\,_{1}\phi_{1}\left(q;qz^{-2};q,qz^{-1}\xi^{-1}\right)
\]
and
\[
 b=\Res\left(\Psi,z^{2}\right)=-\frac{z^{2}}{\left(z^{2};q\right)_{\infty}}\,_{0}\phi_{0}\left(-;-;q,qz\xi^{-1}\right)
  =-z^{2}\frac{\left(qz\xi^{-1};q\right)_{\infty}}{\left(z^{2};q\right)_{\infty}},
\]
results in the asymptotic formula
\[
 P_{n}=-a-\frac{b}{z^{2n+2}}+o(1), \quad \mbox{ as } n\to\infty.
\]
Last equation together with \eqref{eq:tow_asym_aux}, \eqref{eq:tow_asym_second_term_limit} and \eqref{eq:def_P_n} yields the claim (i).

Proof of assertion (ii): Let $\xi\in(q,1)$. For $z=1$, the function $\Psi$ has one pole at $t=1$ of order $2$.
The application of Darboux's method with the comparison function
\[
  h(t)=\frac{a}{(t-1)^{2}}+\frac{b}{t-1}=\sum_{n=0}^{\infty}\left(a(n+1)-b\right)t^{n}
\]
where
\[
 a=\lim_{t\to1}(t-1)^{2}\Psi(t)=\frac{1}{\left(q;q\right)_{\infty}}\,_{0}\phi_{0}\left(-;-;q,q\xi^{-1}\right)
  =\frac{\left(q\xi^{-1};q\right)_{\infty}}{\left(q;q\right)_{\infty}}
\]
and 
\[
 b=\Res\left(\Psi,1\right),
\]
yields
\[
 P_{n}=a(n+1)-b+o(1)=an+O(1), \quad \mbox{ as } n\to\infty.
\]
This formula together with \eqref{eq:tow_asym_aux}, \eqref{eq:tow_asym_second_term_limit} and \eqref{eq:def_P_n} implies
the expansion from the claim (ii) for $\xi\in(q,1)$.

If $\xi=q$ and $z=1$, then the first sum in \eqref{eq:tow_asym_aux} vanishes. Thus, it suffices to note that
\eqref{eq:tow_asym_second_term_limit} equals $1$, to obtain the second asymptotic expansion from the claim (ii).

Proof of assertion (iii): This derivation is again based on the method of Darboux applied to \eqref{eq:gener_P_n}
with $z=-1$ and $\xi\in[q,1)$. It follows similar steps as in the proof of the assertion (ii) with the only exception
that the first sum in \eqref{eq:tow_asym_aux} never vanishes for $z=-1$.
\end{proof}

\begin{prop}\label{prop:asympt_f_circle}
 The following asymptotic formulas hold true for $n\to-\infty$.
\begin{enumerate}[{\upshape i)}]
 \item If $\phi\in(0,\pi)$ and $\alpha\in(q,1]$, then
 \[
  f_{n}(e^{i\phi})=e^{-\ii n\phi}A+e^{\ii n\phi}B+o(1)
 \]
 where
   \begin{align*}
   A=&\frac{\left(q\alpha^{-1}e^{-\ii\phi};q\right)_{\infty}}{(q;q)_{\infty}}\\
   &\times\left(-\frac{\ii e^{\ii\phi}}{2\sin\phi}\,_{1}\phi_{1}\left(q;qe^{-2i\phi};q,\alpha e^{-\ii\phi}\right)
   +{}_{1}\phi_{1}\left(q;q\alpha^{-1}e^{-\ii\phi};q,q\alpha^{-1}e^{\ii\phi}\right)-1\right)
  \end{align*}
  and
  \[
   B=\frac{\theta_{q}\!\left(\alpha e^{\ii\phi}\right)}{\left(e^{2\ii\phi};q\right)_{\infty}}.
  \]
 \item If $\alpha\in(q,1)$, then
 \[
  f_{n}(1)=\frac{\theta_{q}(\alpha)}{\left(q;q\right)_{\infty}}n+O(1),
 \]		     
 while for $\alpha=1$, one has
 \[
  f_{n}(1)=(q;q)_{\infty}+o(1).
 \]

 \item If $\alpha\in(q,1]$, then
 \[
  f_{n}(-1)=\frac{\theta_{q}(-\alpha)}{\left(q;q\right)_{\infty}}n+O(1).
 \]
\end{enumerate}
\end{prop}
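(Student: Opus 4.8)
The plan is to reduce all three assertions to Lemma~\ref{lem:asympt_aux} by a single change of variables. The series entering $f_{n}(z)$ in \eqref{eq:sol_f} is ${}_{1}\tilde{\phi}_{1}(0;z^{-1}\alpha^{-1}q^{n+1};q,z\alpha^{-1}q^{n+1})$, and since $\alpha^{-1}q^{n+1}=(q\alpha^{-1})\,q^{n}$, setting $\xi:=q\alpha^{-1}$ and running the index of the lemma at $-n$ turns its two arguments into exactly $z^{-1}\xi q^{-(-n)}$ and $z\xi q^{-(-n)}$, with $-n\to+\infty$ as $n\to-\infty$. The hypothesis $\alpha\in(q,1]$ is precisely what places $\xi=q\alpha^{-1}$ in the admissible range $[q,1)$ of the lemma, the endpoint $\alpha=1$ corresponding to $\xi=q$, which is the distinguished value appearing in part~(ii).

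First I would dispose of the prefactor bookkeeping, common to all three cases. Multiplying the explicit prefactor $(-1)^{n}\alpha^{-n}q^{n(n+1)/2}$ of \eqref{eq:sol_f} by the lemma's prefactor $(-z\xi)^{-n}q^{-\frac{1}{2}(-n)(-n+1)}$ and substituting $\xi=q\alpha^{-1}$, a short computation shows that all powers of $\alpha$ and of $q$ cancel and the product collapses to $z^{-n}$. In case~(i), where $z=e^{\ii\phi}$ with $\phi\in(0,\pi)$ (so $\Im z>0$), the lemma's bracket is $A+z^{-2(-n)}B+o(1)=A+z^{2n}B+o(1)$, and multiplication by $z^{-n}=e^{-\ii n\phi}$ yields directly the two oscillatory terms $e^{-\ii n\phi}A+e^{\ii n\phi}B+o(1)$. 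In cases~(ii) and~(iii) one evaluates the same collapsed prefactor at $z=\pm1$ while the lemma's bracket grows linearly in its index, which produces the asserted linear growth in $n$.

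It then remains to translate the constants from the lemma's variables $(z,\xi)$ into the proposition's variables $(\phi,\alpha)$. For part~(i) the geometric factor simplifies as $\tfrac{z^{2}}{z^{2}-1}=\tfrac{1}{1-e^{-2\ii\phi}}=-\tfrac{\ii e^{\ii\phi}}{2\sin\phi}$, the three arguments $qz^{-1}\xi^{-1},\,z\xi,\,z^{-1}\xi$ become $\alpha e^{-\ii\phi},\,q\alpha^{-1}e^{\ii\phi},\,q\alpha^{-1}e^{-\ii\phi}$, and the prefactor $(z^{-1}\xi;q)_{\infty}$ becomes $(q\alpha^{-1}e^{-\ii\phi};q)_{\infty}$; for the second constant I would pass from $\theta_{q}(z^{-1}\xi)=\theta_{q}(q\alpha^{-1}e^{-\ii\phi})$ to $\theta_{q}(\alpha e^{\ii\phi})$ by applying \eqref{eq:theta_power_q_id} and then \eqref{eq:theta_recip_arg}, the denominator $(e^{2\ii\phi};q)_{\infty}$ being left untouched. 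For parts~(ii) and~(iii) the same two identities give $\theta_{q}(q\alpha^{-1})=\theta_{q}(\alpha)$ and $\theta_{q}(-q\alpha^{-1})=\theta_{q}(-\alpha)$, turning the lemma's coefficients into $\theta_{q}(\pm\alpha)/(q;q)_{\infty}$; the value $\xi=q$ (that is, $\alpha=1$) in part~(ii) is read off directly from the second branch of Lemma~\ref{lem:asympt_aux}(ii).

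The routine steps are the prefactor cancellation and the change of variables. The genuinely delicate point is the matching of the constant $A$ in part~(i): one must carry the theta identities \eqref{eq:theta_power_q_id} and \eqref{eq:theta_recip_arg} through carefully, check that the geometric factor becomes \emph{exactly} $-\ii e^{\ii\phi}/(2\sin\phi)$, and verify that the theta quotient reduces precisely to $\theta_{q}(\alpha e^{\ii\phi})/(e^{2\ii\phi};q)_{\infty}$ rather than to a quasi-periodic multiple of it. Everything else follows by specialization.
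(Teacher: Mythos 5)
Your argument is precisely the paper's own proof: the paper disposes of this proposition in one sentence by invoking Lemma~\ref{lem:asympt_aux} with $n$ replaced by $-n$, $\xi=q\alpha^{-1}$ and $z=e^{\ii\phi}$, and your expanded bookkeeping checks out --- the prefactor product indeed collapses to $z^{-n}$, the geometric factor satisfies $\frac{z^{2}}{z^{2}-1}=-\frac{\ii e^{\ii\phi}}{2\sin\phi}$, and the theta reduction $\theta_{q}\bigl(q\alpha^{-1}e^{-\ii\phi}\bigr)=\theta_{q}\bigl(\alpha e^{\ii\phi}\bigr)$ is immediate (even without \eqref{eq:theta_power_q_id} and \eqref{eq:theta_recip_arg}, since $\theta_{q}(q/x)=\theta_{q}(x)$ by definition \eqref{eqdeftheta}). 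One remark: in parts (ii) and (iii) the substitution $m=-n$ flips the sign of the linear term, so the literal output of the reduction is $-\theta_{q}(\pm\alpha)\,n/(q;q)_{\infty}+O(1)$ (with the additional oscillating factor $z^{-n}=(-1)^{n}$ at $z=-1$, exactly your ``collapsed prefactor evaluated at $z=\pm1$''), which differs from the printed coefficient by a sign; this discrepancy is inherited from the paper's statement itself, is immaterial for the proposition's only use (linear growth of $|f_{n}|$, hence non-square-summability, in Proposition~\ref{prop:no_sol_ell2_embedded}), and your write-up is no less careful than the paper's here.
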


\begin{proof}
 The proof readily follows from the formulas given in Lemma \ref{lem:asympt_aux} by writing $-n$ instead of $n$, 
 $\xi=q\alpha^{-1}$ and $z=e^{\ii\phi}$, and the definition \eqref{eq:sol_f}.
\end{proof}

\begin{prop}\label{prop:no_sol_ell2_embedded}
 For all $\alpha\in\R$ and all $x\in[-2,2]$, there is no non-trivial solution of the second-order difference equation
 \[
  v_{n-1}+\left(\alpha q^{-n}-x\right)v_{n}+v_{n+1}=0, \quad n\in\Z,
 \]
 belonging to $\ell^{2}(\Z)$.
\end{prop}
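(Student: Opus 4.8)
The plan is to reduce any hypothetical $\ell^2(\Z)$ solution to a scalar multiple of the distinguished solution $f$ and then read off from the asymptotics already established at $-\infty$ that $f$ cannot be square summable there. First I would dispose of the trivial case $\alpha=0$, where \eqref{eq:eigen_eq_B} is a constant-coefficient recurrence whose solutions are $e^{\pm\ii n\phi}$ (or $1,n$ and $(-1)^{n},n(-1)^{n}$ at the endpoints), none of which lies in $\ell^2$ at $-\infty$. For $\alpha\neq0$, the unitary equivalence $B(\alpha,q)\sim-B(-\alpha,q)$ lets me assume $\alpha>0$, and a shift of the index (which turns $\alpha$ into $\alpha q^{-j}$ while preserving both membership in $\ell^2(\Z)$ and the spectral value $x$) lets me assume $\alpha\in(q,1]$, the range for which Proposition~\ref{prop:asympt_f_circle} applies. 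Throughout I write $x=\mu(e^{\ii\phi})=2\cos\phi$ with $\phi\in[0,\pi]$, i.e. $z=e^{\ii\phi}$, $|z|=1$.

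Next I would observe that the difference expression is in the limit point case at $+\infty$. At any convenient non-degenerate $z\notin\alpha^{-1}q^{\Z}\cup\{0\}$ the explicit solutions $f,g$ are linearly independent by Proposition~\ref{prop:two_sol_asympt_Wronks}, and by \eqref{eq:f_n_asympt_+inf}--\eqref{eq:g_n_asympt_+inf} the solution $f$ decays like $q^{n(n+1)/2}$ while $g$ grows like $q^{-n(n-1)/2}$ as $n\to\infty$; since some solution then fails to be square summable at $+\infty$, the limit point/limit circle classification (being independent of the spectral parameter) yields the limit point case at $+\infty$ for every $x$. Consequently the space of solutions of \eqref{eq:eigen_eq_B} that are square summable near $+\infty$ is one-dimensional, and since $f(e^{\ii\phi})\in\ell^2$ near $+\infty$ by \eqref{eq:f_n_asympt_+inf} (the Gaussian factor $q^{n(n+1)/2}$ beats $\alpha^{-n}$), it is spanned by $f$. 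Hence every solution lying in $\ell^2(\Z)$ is a scalar multiple of $f$, and it remains only to show that $f$ is \emph{not} square summable at $-\infty$.

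For this I would invoke the asymptotics of $f$ at $-\infty$ from Proposition~\ref{prop:asympt_f_circle} and check case by case that the relevant coefficient is nonzero. For $\phi\in(0,\pi)$ one has $f_{n}(e^{\ii\phi})=Ae^{-\ii n\phi}+Be^{\ii n\phi}+o(1)$ with $B=\theta_{q}(\alpha e^{\ii\phi})/(e^{2\ii\phi};q)_{\infty}$; here $\theta_{q}(\alpha e^{\ii\phi})\neq0$ because the zeros of $\theta_{q}$ form the real set $q^{\Z}$ whereas $\alpha e^{\ii\phi}$ is non-real, and $(e^{2\ii\phi};q)_{\infty}$ is finite and nonzero since none of its factors $1-q^{k}e^{2\ii\phi}$ vanishes for $\phi\in(0,\pi)$. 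Thus $B\neq0$, the Ces\`aro mean of $|f_{n}|^{2}$ tends to $|A|^{2}+|B|^{2}>0$, and $\sum_{n<0}|f_{n}|^{2}=\infty$. For $x=2$ with $\alpha\in(q,1)$ the expansion $f_{n}(1)=\frac{\theta_{q}(\alpha)}{(q;q)_{\infty}}n+O(1)$ with $\theta_{q}(\alpha)\neq0$ exhibits linear growth, and for $x=-2$ the expansion $f_{n}(-1)=\frac{\theta_{q}(-\alpha)}{(q;q)_{\infty}}n+O(1)$ with $\theta_{q}(-\alpha)\neq0$ (as $-\alpha<0\notin q^{\Z}$) does the same; finally for $x=2$ with $\alpha=1$ the expansion $f_{n}(1)=(q;q)_{\infty}+o(1)$ tends to a nonzero constant. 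In each case $f\notin\ell^2$ at $-\infty$, so no non-trivial $\ell^2(\Z)$ solution exists.

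The step I expect to be most delicate is the degenerate value $(\alpha,x)=(1,2)$, i.e. $z=1$ with $\alpha=1$: there $W(f,g)=-\theta_{q}(\alpha z)=-\theta_{q}(1)=0$, so $f$ and $g$ are linearly \emph{dependent} and the explicit pair no longer forms a basis. This is precisely why I route the $+\infty$ argument through the parameter-independence of the limit point dichotomy (verified at any non-degenerate $z$) rather than through the independence of $f$ and $g$ at this particular $z$; with the limit point property in hand, the conclusion that every $\ell^2(\Z)$ solution is proportional to $f$ holds uniformly, and the bounded nonzero behaviour of $f_{n}(1)$ then settles this case too. The only other point requiring care is the systematic verification that the theta-coefficients never vanish, which in every instance reduces to the elementary description of the zero set $q^{\Z}$ of $\theta_{q}$.
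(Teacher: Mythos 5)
Your proof is correct, and it takes a genuinely different route from the paper's at the structural level, even though both rest on the same explicit solutions $f,g$ and the asymptotics of Propositions~\ref{prop:two_sol_asympt_Wronks} and~\ref{prop:asympt_f_circle}. The paper splits into a generic case $(\alpha,x)\in(q,1]\times[-2,2]\setminus\{(1,2)\}$, where $f(e^{\ii\phi})$ and $g(e^{\ii\phi})$ are linearly independent and the $-\infty$ asymptotics rule out square-summable solutions, and the degenerate case $(\alpha,x)=(1,2)$, where $f$ and $g$ are proportional and the paper constructs a second solution $y$, using the constancy of the Wronskian $f_{n+1}(1)y_{n}-f_{n}(1)y_{n+1}$ together with $f_{n}(1)\to0$ at $+\infty$ to show $y$ is unbounded there. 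You instead establish the limit point case at $+\infty$ once, at a single nondegenerate $z$ where $g$ grows like $q^{-n(n-1)/2}$, and invoke the parameter-independence of the limit point/limit circle dichotomy; this makes the space of solutions square summable at $+\infty$ exactly $\C f$ for \emph{every} $x\in[-2,2]$ simultaneously, reducing the whole proposition to the single claim $f\notin\ell^{2}$ at $-\infty$. That organization buys two things. First, the degenerate point $(1,2)$ needs no separate construction: limit point already gives uniqueness there, and $f_{n}(1)=(q;q)_{\infty}+o(1)$ finishes it. Second, you never have to exclude square-summable \emph{combinations} of two individually non-$\ell^{2}$ solutions at $-\infty$; the paper's phrase that neither $f(e^{\ii\phi})$ nor $g(e^{\ii\phi})$ is square summable at $-\infty$ does not by itself exclude such a combination (it is the two-frequency form of the asymptotics that does, and this is left implicit there), whereas your reduction to multiples of $f$ sidesteps the issue entirely. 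Your Ces\`aro-mean argument for the oscillatory case and the systematic check that the theta coefficients are nonzero (the zero set of $\theta_{q}$ being $q^{\Z}\subset(0,\infty)$) are exactly the right justifications. The only price is the appeal to the standard fact that the limit point/limit circle alternative does not depend on the spectral parameter, see for instance \cite[Sec.~2.6]{Teschl}, which the paper's direct Wronskian argument avoids.
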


\begin{proof}
 For $\alpha=0$, the statement is a well-known fact. If $\alpha\neq0$, it suffices to verify the statement for $\alpha\in(q,1]$.
 
 First, assume $(\alpha,x)\in(q,1]\times[-2,2]\setminus\{(1,2)\}$. If we put $x=\mu(z)$ with $z=e^{\ii\phi}$, $\phi\in[0,\pi]$,
 then $(\alpha,\phi)\in(q,1]\times[0,\pi]\setminus(1,0)$ and, according to Proposition \ref{prop:two_sol_asympt_Wronks}, the solutions 
 $f(e^{\ii\phi})$ and $g(e^{\ii\phi})$ are linearly independent. Consequently, any solution of \eqref{eq:eigen_eq_B} 
 is a linear combination of $f(e^{\ii\phi})$ and $g(e^{\ii\phi})$. However, Proposition \ref{prop:asympt_f_circle} 
 and relation \eqref{eq:g_n_asympt_-inf} yield nor $f(e^{\ii\phi})$, nor $g(e^{\ii\phi})$, are square summable at $-\infty$.
 
 In the remaining case, when $\alpha=1$ and $x=2$, the solutions $f(1)$ and $g(1)$ are linearly dependent.
 $f(1)$ is not square summable at $-\infty$ by claim (ii) of Proposition \ref{prop:asympt_f_circle}. 
 Let $y$ denotes the second linearly independent solution of (\ref{eq:eigen_eq_B}) with $z=\alpha=1$. Then 
 the Wronskian $f_{n+1}(1)y_{n}-f_{n}(1)y_{n+1}$ is a nonzero constant. However, $f_{n}(1)$ tends to zero 
 if $n\rightarrow\infty$,  by (\ref{eq:f_n_asympt_+inf}). It follows, the solution $y$ can not be bounded at $+\infty$. 
 Consequently, no nontrivial linear combination of $f(1)$ and $y$ does belong to $\ell^{2}(\Z)$. 
\end{proof}

\subsection{Spectrum of $B$}

Recall that we already know, by Proposition \ref{prop:ess_spec_B}, that $\spec_{ess}(B)=[-2,2]$.
In addition, as it follows from Proposition \ref{prop:no_sol_ell2_embedded}, the operator $B$ 
has no eigenvalue embedded in the essential spectrum. In this subsection, we will show that there infinitely many simple 
eigenvalues located outside the essential spectrum and they will be determined fully explicitly. Moreover, we will find 
the corresponding eigenvectors and compute their norm.

\begin{prop}\label{prop:spec_B}
 If $\alpha\neq0$, then
 \begin{equation*}
 \spec(B)\setminus[-2,2]=\spec_{p}(B)=\left\{\alpha^{-1}q^{m}+\alpha q^{-m} \mid m>\lfloor\log_{q}|\alpha|\rfloor\right\}
 \end{equation*}
 and all points from this set are simple eigenvalues of $B$. In addition, eigenvectors $\textbf{v}_{m}$ 
 corresponding to eigenvalues $\alpha^{-1}q^{m}+\alpha q^{-m}$, $m>\lfloor\log_{q}|\alpha|\rfloor$, can be chosen as 
 $\textbf{v}_{m}=\{v_{m,j}\}_{j=-\infty}^{\infty}$, with
 \begin{equation}
 v_{m,j}=(-1)^{j}\alpha^{-j}q^{\frac{1}{2}j(j+1)}\,_{1}\tilde{\phi}_{1}\left(0;q^{-m+j+1};q,\alpha^{-2}q^{m+j+1}\right)\!.
 \label{eq:eigenvec_v_mj}
 \end{equation}
\end{prop}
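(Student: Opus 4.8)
The plan is to reduce the whole statement to a study of the $\ell^{2}(\Z)$-membership of the two explicit solutions $f$ and $g$ from Proposition~\ref{prop:two_sol_asympt_Wronks}, exploiting that $f$ is \emph{always} square summable at $+\infty$ while the remaining asymptotics are governed by the single theta factor $\theta_{q}(\alpha z)$.

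First I would settle the two set identities. Since $\spec_{ess}(B)=[-2,2]$ by Proposition~\ref{prop:ess_spec_B}, the part of $\spec(B)$ lying in $\R\setminus[-2,2]$ is purely discrete and hence consists of eigenvalues; conversely Proposition~\ref{prop:no_sol_ell2_embedded} forbids eigenvalues inside $[-2,2]$. Together these give $\spec(B)\setminus[-2,2]=\spec_{p}(B)$, so it remains to locate the eigenvalues. I parametrize a prospective eigenvalue as $x=\mu(z)=z+z^{-1}$ with $z$ real and $0<|z|<1$ (the Joukowski preimage of $\R\setminus[-2,2]$) and ask when \eqref{eq:eigen_eq_B} has a nontrivial $\ell^{2}(\Z)$ solution.

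The core of the argument is a dichotomy driven by the Wronskian \eqref{eq:Wronsk_f_g}, $W(f,g)=-z^{-1}\theta_{q}(\alpha z)$. In the generic case $\theta_{q}(\alpha z)\neq0$ the solutions $f,g$ are independent, so every solution is $af+bg$. By \eqref{eq:f_n_asympt_+inf} the sequence $f$ decays super-exponentially at $+\infty$, while by \eqref{eq:g_n_asympt_+inf} $g$ blows up there: its coefficient $\theta_{q}(z^{-1}\alpha^{-1})$ is nonzero precisely when $\theta_{q}(\alpha z)$ is, as both vanish exactly on $z\in\alpha^{-1}q^{\Z}$. Hence square summability at $+\infty$ forces $b=0$, and then \eqref{eq:f_n_asympt_-inf_|x|<1} shows that $af$ grows like $z^{n}$ at $-\infty$ and is not square summable; no eigenvalue arises. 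In the exceptional case $\theta_{q}(\alpha z)=0$, i.e.\ $z=\alpha^{-1}q^{m}$ for some $m\in\Z$, the Wronskian vanishes, so $f$ and $g$ become proportional; I then read off square summability at $+\infty$ from \eqref{eq:f_n_asympt_+inf} applied to $f$ and square summability at $-\infty$ from \eqref{eq:g_n_asympt_-inf} applied to $g$ (with $(qz^{2};q)_{\infty}\neq0$ since $|z|<1$), so the common solution lies in $\ell^{2}(\Z)$ and $x=\alpha^{-1}q^{m}+\alpha q^{-m}$ is an eigenvalue. This detour through the vanishing Wronskian is what I expect to be the delicate point: it is exactly what lets me certify decay at $-\infty$ without analyzing the subleading asymptotics of $f$ once its leading $z^{n}$-coefficient has been killed.

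Finally I would collect the bookkeeping. The admissibility constraint $0<|z|<1$ for $z=\alpha^{-1}q^{m}$ reads $q^{m}<|\alpha|$, i.e.\ $m>\log_{q}|\alpha|$, which for integer $m$ becomes $m>\lfloor\log_{q}|\alpha|\rfloor$ and yields the stated index range; the boundary value $|z|=1$ lands in the essential spectrum and is correctly excluded. Substituting $z=\alpha^{-1}q^{m}$ into \eqref{eq:sol_f} turns the two arguments of ${}_{1}\tilde{\phi}_{1}$ into $q^{-m+n+1}$ and $\alpha^{-2}q^{m+n+1}$, reproducing the eigenvector \eqref{eq:eigenvec_v_mj}. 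Simplicity is immediate: any eigenvector is in particular square summable at $+\infty$, hence proportional to $f$, so each eigenspace is one-dimensional.
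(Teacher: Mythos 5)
Your proposal is correct, and its analytic core coincides with the paper's: the dichotomy on the vanishing of $W(f,g)=-z^{-1}\theta_{q}(\alpha z)$, with square summability of $f$ at $+\infty$ and of $g$ at $-\infty$ read off from \eqref{eq:f_n_asympt_+inf} and \eqref{eq:g_n_asympt_-inf}, growth of $g$ at $+\infty$ and of $f$ at $-\infty$ (via \eqref{eq:g_n_asympt_+inf} and \eqref{eq:f_n_asympt_-inf_|x|<1}) killing all candidates in the generic case, and the vanishing Wronskian used to make $f$ and $g$ proportional in the exceptional case, which is exactly how the paper certifies decay of $f$ at $-\infty$ without subleading asymptotics. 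Where you genuinely differ is in organization: the paper proves the result first for $\alpha\in(q,1]$ only (its part 1, eigenvalues $\alpha^{-1}q^{\N}+\alpha q^{-\N}$) and then reaches general $\alpha\neq0$ through two unitary conjugations, the shift $U_{\Delta}$ with $\Delta=\lfloor\log_{q}\alpha\rfloor$ giving $U_{\Delta}^{*}B(\alpha)U_{\Delta}=B(\beta)$ and the sign flip $V^{*}B(\alpha)V=-B(-\alpha)$, transporting eigenvectors back and discarding a constant factor to land on \eqref{eq:eigenvec_v_mj}. You instead run the dichotomy directly for arbitrary $\alpha\neq0$, which is legitimate because Proposition~\ref{prop:two_sol_asympt_Wronks} is stated and proved for all $z,\alpha\in\C\setminus\{0\}$; the restriction to $(q,1]$ is only forced by the unit-circle asymptotics of Lemma~\ref{lem:asympt_aux}, which you never invoke since you quote Proposition~\ref{prop:no_sol_ell2_embedded} wholesale for $|z|=1$. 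Your route buys a cleaner bookkeeping: the constraint $m>\lfloor\log_{q}|\alpha|\rfloor$ drops out directly from $0<|\alpha^{-1}q^{m}|<1$ rather than from the shift by $\Delta$, and \eqref{eq:eigenvec_v_mj} follows from one substitution into \eqref{eq:sol_f} instead of the shifted $f_{j-\Delta}$ computation in the paper's part 2; the paper's reduction buys only the reuse of unitaries it needs elsewhere. One point you should make explicit: your one-line simplicity argument (``$\ell^{2}$ at $+\infty$, hence proportional to $f$'') silently assumes the space of solutions square summable at $+\infty$ is one-dimensional, which requires the same observation the paper spells out, namely that a solution $y$ independent of $f$ would give a nonzero constant Wronskian $f_{n+1}y_{n}-f_{n}y_{n+1}$, impossible when both sequences tend to zero.
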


\begin{proof}
 Proposition \ref{prop:no_sol_ell2_embedded} tells us that $\Ker(B-x)=\{0\}$ for all $x\in[-2,2]$. By taking into account also Proposition
 \ref{prop:ess_spec_B}, one arrives at the equality $\spec(B)\setminus[-2,2]=\spec_{p}(B)$. Next, we divide the proof into 3 parts.
 
 1) Let $\alpha\in(q,1]$. Take $0<|z|<1$ such that $z\notin\alpha^{-1}q^{\N}$. By Proposition \ref{prop:two_sol_asympt_Wronks}, 
 any solution $y$ of the equation \eqref{eq:eigen_eq_B} is a linear combination of the solutions $f$ and $g$. In addition, by inspection of 
 the asymptotic behavior of $f_{n}$ and $g_{n}$, for $n\to\pm\infty$, one finds $y$ can not be in $\ell^{2}(\mathbb{Z})$, unless $y\neq0$. 
 Thus, we have $\Ker(B-x)=\{0\}$ for all $x\notin\alpha^{-1}q^{\N}+\alpha q^{-\N}$.
 
 On the other hand, if $z\in\alpha^{-1}q^{\N}$, then the solutions $f$ and $g$ are linearly dependent since their Wronskian 
 vanishes. Moreover, $f_{n}$ is square summable at $+\infty$ and $g_{n}$ is square summable at $-\infty$. Hence, $f\in\Ker(B-x)$ for all 
 $x\in\alpha^{-1}q^{\N}+\alpha q^{-\N}$. Consequently, points from the set $\alpha^{-1}q^{\N}+\alpha q^{-\N}$ are all eigenvalues of $B$ 
 and there are no other eigenvalues of $B$.
 Moreover, the eigenvector $\textbf{\textit{v}}_{m}$ corresponding to $\alpha^{-1}q^{m}+\alpha q^{-m}$, $m\in\N$, can be chosen such that 
 \begin{equation}
  v_{m,j}=f_{j}\left(\alpha^{-1}q^{m}\right)\!, \quad \forall j\in\Z,\, m\in\N.
 \label{eq:v_mj_rel_f}
 \end{equation}
 
 2) Let $\alpha>0$. Then there exists unique $\Delta\in\Z$ and $\beta\in(q,1]$ such that $\alpha=q^{\Delta}\beta$. Namely,
 \[
  \Delta=\lfloor\log_{q}\alpha\rfloor \quad \mbox{ and } \quad \beta=\alpha q^{-\lfloor\log_{q}\alpha\rfloor}.
 \]
 Let $U_{\Delta}$ stands for the unitary operator determined by equalities $U_{\Delta}e_{n}=e_{n+\Delta}$, for all $n\in\Z$.
 Then one has
 \begin{equation}
  U_{\Delta}^{*}B(\alpha)U_{\Delta}=B(\beta).
 \label{eq:Unit_Delta_B}
 \end{equation}
 Consequently, taking into account the already proved part 1), we obtain
 \[
  \spec_{p}(B(\alpha))=\spec_{p}(B(\beta))=\left\{\beta^{-1}q^{n}+\beta q^{-n} \mid n\in\N\right\}=\left\{\alpha^{-1}q^{m}+\alpha q^{-m} \mid m>\Delta\right\}.
 \]
 Further, by using \eqref{eq:Unit_Delta_B} and \eqref{eq:v_mj_rel_f}, one finds the $j$-th element of the eigenvector of $B(\alpha)$ corresponding to the eigenvalue
 $\alpha^{-1}q^{m}+\alpha q^{-m}$, $m>\Delta$, equals
 \begin{align}
  &\left(U_{\Delta}f\!\left(\alpha q^{-\Delta};\alpha^{-1}q^{m}\right)\right)_{j}=f_{j-\Delta}\!\left(\alpha q^{-\Delta};\alpha^{-1}q^{m}\right)\nonumber\\
  &\hskip20pt=(-1)^{j+\Delta}\alpha^{\Delta-j}q^{-\frac{1}{2}\Delta(\Delta+1)+\frac{1}{2}j(j+1)}\,_{1}\tilde{\phi}_{1}\left(0;q^{-m+j+1};q,\alpha^{-2}q^{m+j+1}\right) \label{eq:eigenvec_v_mj_in_proof_part_2}
 \end{align}
 where we temporarily designated the dependence of $f_{n}(z)$, given by \eqref{eq:sol_f}, on $\alpha$ by writing $f_{n}(\alpha;z)$. To arrive at \eqref{eq:eigenvec_v_mj}, it suffices 
 to omit the multiplicative constant not depending on~$j$ in \eqref{eq:eigenvec_v_mj_in_proof_part_2}.
 
 3) It remains to discuss the case $\alpha<0$. However, the verification of the statement straightforwardly follows from the already proven part 2) and the fact that
 \[
  V^{*}B(\alpha)V=-B(-\alpha)
 \]
 where $V$ is the unitary operator determined by equalities $Ve_{n}=(-1)^{n}e_{n}$, for all $n\in\Z$.

 Finally, the simplicity of eigenvalues is a general property which follows from the following. 
 If $u$ and $v$ are two linearly independent solutions of \eqref{eq:eigen_eq_B} both from $\ell^{2}(\mathbb{Z})$, their Wronskian 
 $W(u,v)=u_{n+1}v_{n}-u_{n}v_{n+1}$ is a nonzero constant. However, at the same time, this expression tends to $0$, as $n\rightarrow\infty$, which would lead to a contradiction.
\end{proof}

Next, we compute the norm of eigenvectors \eqref{eq:eigenvec_v_mj}.

\begin{prop}\label{prop:norm_eigenvec_B}
 Let $\alpha\neq0$. The eigenvectors $\textbf{\textit{v}}_{m}=\{v_{m,j}\}_{j=-\infty}^{\infty}\,$, whose components are given by \eqref{eq:eigenvec_v_mj},
 are normalized as follows:
 \begin{equation}
  \|\textbf{\textit{v}}_{m}\|_{\ell^{2}(\mathbb{Z})}=\frac{|\alpha|^{-m}q^{m(m+1)/2}}{\sqrt{1-\alpha^{-2}q^{2m}}}\,(q;q)_{\infty}, \quad m>\lfloor\log_{q}|\alpha|\rfloor.
 \label{eq:norm_eigenvec_v_n}
 \end{equation}
\end{prop}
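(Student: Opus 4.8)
The plan is to reduce to $\alpha\in(q,1]$ and then compute $\|\mathbf v_m\|^2=\|f(z_0)\|^2$, where $z_0=\alpha^{-1}q^{m}$ with $m\in\N$ and $f$ is given by \eqref{eq:sol_f}, via the Green's-formula-and-differentiation device used in Proposition~\ref{prop:l2norm}. The two unitaries from the proof of Proposition~\ref{prop:spec_B}, namely \eqref{eq:Unit_Delta_B} and $V$ with $Ve_n=(-1)^ne_n$, preserve $\ell^2$-norms and map eigenvectors to eigenvectors, so it suffices to prove \eqref{eq:norm_eigenvec_v_n} for $\alpha\in(q,1]$, where Proposition~\ref{prop:spec_B} identifies $\mathbf v_m$ with $f(z_0)$. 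I would then write the Green identity $W_N(f(z),f(w))-W_M(f(z),f(w))=(\mu(z)-\mu(w))\sum_{k=M+1}^{N}f_k(z)f_k(w)$ for the difference equation \eqref{eq:eigen_eq_B}, send $N\to\infty$ (the boundary term vanishes because $f$ is recessive at $+\infty$ for every spectral parameter by \eqref{eq:f_n_asympt_+inf}), fix $z=z_0$, and differentiate in $w$ at $w=z_0$ to reach the working formula
\[
 \|f(z_0)\|^2=\frac{1}{\mu'(z_0)}\lim_{M\to-\infty}W_M\!\left(f(z_0),\dot f\right),\qquad \dot f:=\partial_z f(z)\big|_{z=z_0},
\]
where $\mu(z)=z+z^{-1}$.

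Everything then rests on the behaviour at $-\infty$. At the eigenvalue the dominant coefficient $\theta_q(\alpha z_0)/(z_0^2;q)_\infty$ in \eqref{eq:f_n_asympt_-inf_|x|<1} vanishes, since $\alpha z_0=q^{m}$ is a zero of $\theta_q$; hence $f(z_0)$ is purely recessive, $f_N(z_0)\sim R\,z_0^{-N}$, while differentiating \eqref{eq:f_n_asympt_-inf_|x|<1} in $z$ leaves the surviving dominant term $\dot f_N\sim z_0^{N}\,\alpha\theta_q'(\alpha z_0)/(z_0^2;q)_\infty$. Pairing the recessive $f(z_0)$ with the dominant part of $\dot f$ in $W_M$ (the recessive--recessive contributions drop out in the limit) collapses the Wronskian to $R\,\alpha\theta_q'(\alpha z_0)(z_0^{-1}-z_0)/(z_0^2;q)_\infty$; dividing by $\mu'(z_0)=1-z_0^{-2}$ and using $(z_0^{-1}-z_0)/(1-z_0^{-2})=-z_0$ gives $\|f(z_0)\|^2=-R\,\alpha z_0\,\theta_q'(\alpha z_0)/(z_0^2;q)_\infty$. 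The value $\theta_q'(q^{m})=(-1)^{m+1}q^{-m(m+1)/2}(q;q)_\infty^2$ I would obtain from $\theta_q'(1)=-(q;q)_\infty^2$ (read off \eqref{eqdeftheta} near $x=1$) together with the quasiperiodicity \eqref{eq:theta_power_q_id}.

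The one genuinely nontrivial ingredient, and the step I expect to be the main obstacle, is the recessive coefficient $R=\lim_{N\to-\infty}z_0^{N}f_N(z_0)$. Rather than a second-order Darboux analysis, I would exploit the symmetry in Remark~\ref{rem:symmet_sol}. Setting $\tilde g(z):=g(z^{-1})$, the pair $g,\tilde g$ is a basis at $-\infty$ with $g$ recessive ($g_N\sim z^{-N}(qz^2;q)_\infty$) and $\tilde g$ dominant ($\tilde g_N\sim z^{N}(qz^{-2};q)_\infty$), so $f(z)=P(z)g(z)+Q(z)\tilde g(z)$; matching the dominant $z^{N}$-parts through \eqref{eq:f_n_asympt_-inf_|x|<1} and \eqref{eq:g_n_asympt_-inf} gives $Q(z)=\theta_q(\alpha z)/\theta_q(z^2)$. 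Since $f(z)=f(z^{-1})$ while $g(z^{-1})=\tilde g(z)$ and $\tilde g(z^{-1})=g(z)$, applying $z\mapsto z^{-1}$ to the connection formula and comparing coefficients forces $P(z)=Q(z^{-1})$, whence the recessive coefficient of $f$ is $R(z)=P(z)(qz^2;q)_\infty=\theta_q(\alpha z^{-1})/(z^{-2};q)_\infty$.

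Finally I would substitute $R=\theta_q(\alpha z_0^{-1})/(z_0^{-2};q)_\infty$ with $\alpha z_0^{-1}=\alpha^{2}q^{-m}$ and $z_0^{-2}=\alpha^{2}q^{-2m}$ into the Wronskian formula, reduce all theta factors to $\theta_q(\alpha^2)$ and $(q;q)_\infty^2$ by \eqref{eq:theta_power_q_id} and all $q$-Pochhammer factors by $(a;q)_\infty=(1-a)(aq;q)_\infty$, and observe $\theta_q(\alpha^2)$ cancel; a short exponent count (only $q^{m(m+1)}$ survives) then yields $\|\mathbf v_m\|^2=\alpha^{-2m}q^{m(m+1)}(q;q)_\infty^2/(1-\alpha^{-2}q^{2m})$, which is \eqref{eq:norm_eigenvec_v_n} for $\alpha\in(q,1]$. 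The statement for arbitrary $\alpha\neq0$ follows from the unitary reductions recorded at the outset.
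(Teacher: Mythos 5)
Your computation is correct in substance and lands on the right constants (the sign $(z_0^{-1}-z_0)/(1-z_0^{-2})=-z_0$, the value $\theta_q'(q^m)=(-1)^{m+1}q^{-m(m+1)/2}(q;q)_\infty^2$, and the final exponent count all check out), but it is assembled differently from the paper's proof. The paper also runs Green's formula and differentiates in the spectral parameter, yet it works with the pair $(f,g)$: at an eigenvalue it writes $g=Af$, derives the master identity $\|f\|^2=\frac{x^2}{1-x^2}\,A^{-1}W'(f,g)$, computes $A=\lim_{k\to\infty}g_k/f_k=(-1)^m\alpha^{2m}q^{-m(3m+1)/2}$ by a direct series rearrangement of $g_k(\alpha^{-1}q^m)$ at $+\infty$, and then differentiates the closed-form Wronskian \eqref{eq:Wronsk_f_g} in $z$. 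You instead pair $f$ against $\dot f$ alone and control the $-\infty$ boundary data through the connection formula $f=Pg+Q\tilde g$, i.e.\ you rederive Lemma~\ref{lem:connect_formula} (which the paper only proves later, for the spectral resolution) by the same $z\leftrightarrow z^{-1}$ symmetry argument used there. Both routes are sound; the paper's buys uniformity in $\alpha$ (it never needs $g(z)$ and $g(z^{-1})$ to be independent), yours buys economy, reusing a lemma needed later anyway. Two smaller remarks: the reduction to $\alpha\in(q,1]$ is unnecessary, since \eqref{eq:eigenvec_v_mj} equals $f_j(\alpha^{-1}q^m)$ verbatim for every $\alpha\neq0$ and your argument runs for any $0<|z_0|<1$; and if you do keep it, the phrase ``unitaries preserve norms'' glosses over the fact that $U_\Delta$ carries $f(\beta;\cdot)$ to a scalar multiple of $\mathbf{v}_m(\alpha)$, so the multiplicative constant recorded in \eqref{eq:eigenvec_v_mj_in_proof_part_2} must be tracked (it does work out).

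There is one genuine, though repairable, gap: your basis $\{g(z_0),\tilde g(z_0)\}$ and the coefficients $P,Q$ degenerate precisely when $z_0=\alpha^{-1}q^m\in q^{\Z/2}$, i.e.\ when $\alpha\in\pm q^{\Z/2}$; inside your reduced range this means $\alpha=q^{1/2}$ and $\alpha=1$, and $\alpha=1$ is explicitly included in $(q,1]$. At these points $W(g,\tilde g)=z_0^{-1}\theta_q(z_0^2)=0$, Lemma~\ref{lem:connect_formula} does not apply, and $P(z_0)$, $Q'(z_0)$ are ill-defined, so your extraction of the recessive coefficient $R$ and of the dominant part of $\dot f$ fails as written. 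You should either patch this by continuity/analyticity in $\alpha$ (both sides of \eqref{eq:norm_eigenvec_v_n} are continuous near the exceptional values and $\sum_j f_j^2(\alpha^{-1}q^m)$ converges locally uniformly in $\alpha$), or compute the proportionality constant at $+\infty$ as the paper does, which avoids the degeneracy altogether. Relatedly, ``differentiating \eqref{eq:f_n_asympt_-inf_|x|<1} in $z$'' is not legitimate for an $o(1)$ asymptotic relation per se; it is justified in your setup only because you actually differentiate the exact connection formula and evaluate at $z_0$, where $Q(z_0)=0$ kills the $\dot{\tilde g}$ term --- this should be said explicitly, together with the estimate $|\dot g_M(z_0)|\lesssim(1+|M|)\,|z_0|^{-M}$ showing that $W_M(g,\dot g)\to0$ as $M\to-\infty$, so that only the $Q'(z_0)\,W_M(g,\tilde g)$ term survives.
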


\begin{proof}
 The proof is divided into two parts. First, we derive a general formula for the norm of an eigenvector which is subsequently applied to our specific case. 

 Recall $f(x)=\{f_{n}(x)\}_{n=-\infty}^{\infty}$ and $g(x)=\{g_{n}(x)\}_{n=-\infty}^{\infty}$ are solutions of \eqref{eq:eigen_eq_B} and, for $0<|x|<1$,
 $f(x)$ is square summable at $+\infty$ and $g(x)$ is square summable at $-\infty$, as it follows from \eqref{eq:f_n_asympt_+inf} and \eqref{eq:g_n_asympt_-inf}.
 By using the Green's formula, we obtain 
 \[
 \left(\mu(x)-\mu(y)\right)\sum_{j=m+1}^{n}g_{j}(x)g_{j}(y)=W_{n}(g(x),g(y))-W_{m}(g(x),g(y)).
 \]
 Since $g(x)$ is square summable at $-\infty$, for $0<|x|<1$, $W_{m}(g(x),g(y))\rightarrow0$, as $m\rightarrow-\infty$, if $|x|<1$ and $|y|<1$.
 Thus, by sending $m\to-\infty$ in the last equation, we arrive at the identity
 \[
 \left(\mu(x)-\mu(y)\right)\sum_{j=-\infty}^{n}g_{j}(x)g_{j}(y)=W_{n}(g(x),g(y)).
 \]
 Next, we divide both sides by $x-y$ and send $y\to x$ in the last equation. 
 Since the sum on the LHS converges locally uniformly with the arguments $x$ and $y$ in the unit disk, we can interchange the limit and the sum and
 we get
 \[
  \mu'(x)\sum_{j=-\infty}^{n}g_{j}^{2}(x)=W_{n}(g'(x),g(x)), \quad \mbox{ for } 0<|x|<1,
 \]
 which is equivalent to
 \begin{equation}
 \sum_{j=-\infty}^{n}g_{j}^{2}(x)=\frac{x^{2}}{x^{2}-1}W_{n}(g'(x),g(x)), \quad \mbox{ for } 0<|x|<1.
 \label{eq:to_ell_2_norm_g}
 \end{equation}
 Similarly, since $f(x)$ is square summable at $+\infty$, by \eqref{eq:f_n_asympt_+inf}, one deduces that
 \begin{equation}
 \sum_{j=m+1}^{\infty}f_{j}^{2}(x)=-\frac{x^{2}}{x^{2}-1}W_{m}(f'(x),f(x)), \quad \mbox{ for } 0<|x|<1.
 \label{eq:to_ell_2_norm_f}
 \end{equation}
 
 If $\mu(x)$ is an eigenvalue of $B$, solutions $f(x)$ and $g(x)$ are linearly dependent. Thus, there exists a nonzero number $A=A(x)$ such that $g(x)=Af(x)$. 
 By differentiating the Wronskian $W_{n}(f(x),g(x))$ and using equations \eqref{eq:to_ell_2_norm_g} and \eqref{eq:to_ell_2_norm_f}, we obtain
 \begin{align*}
 &W'(f(x),g(x))=f_{n+1}'(x)g_{n}(x)+f_{n+1}(x)g_{n}'(x)-f_{n}'(x)g_{n+1}(x)-f_{n}(x)g_{n+1}'(x)\\
 &=AW_{n}(f'(x),f(x))-A^{-1}W_{n}(g'(x),g(x))
 =\frac{1-x^{2}}{x^{2}}\left(A\sum_{j=n+1}^{\infty}f_{j}^{2}(x)+A^{-1}\sum_{j=-\infty}^{n}g_{j}^{2}(x)\right)\!\!.
 \end{align*}
 Finally, by sending $n\rightarrow-\infty$, we arrive at the formula
 \begin{equation}
 \sum_{j=-\infty}^{\infty}f_{j}^{2}(x)=\frac{x^{2}}{1-x^{2}}A^{-1}W'(f(x),g(x)),  \quad \mbox{ for } 0<|x|<1.
 \label{eq:norm_general_ident}\end{equation}
 The subscript $n$ in the Wronskian has been omitted for it does not dependent on $n$.
 
 In the second part of the proof, we evaluate the quantities $A$ and $W'(f(x),g(x))$ in \eqref{eq:norm_general_ident}.
 Let $x=\alpha^{-1}q^{m}$, with $m>\lfloor\log_{q}|\alpha|\rfloor$ being fixed. Then $\mu(x)$ is an eigenvalue of $B$
 corresponding to eigenvector $\textbf{\textit{v}}_{m}$, where $v_{m,j}=f_{j}(\alpha^{-1}q^{m})$, by Proposition \ref{prop:spec_B}.
 
 First, we find $A$ such that $g(\alpha^{-1}q^{m})=Af(\alpha^{-1}q^{m})$. Since for $k\geq m$, one has
 \begin{align*}
  g_{k}(\alpha^{-1}q^{m})&=\alpha^{k}q^{-mk}\sum_{j=k-m}^{\infty}(q^{m-k+j+1};q)_{\infty}\frac{(-1)^{j}q^{\frac{1}{2}j(j+1)+2mj}}{(q;q)_{j}}\alpha^{-2j}\\
  &=(-1)^{m+k}\alpha^{2m-k}q^{-\frac{1}{2}m(3m+1)+\frac{1}{2}k(k+1)}\sum_{j=0}^{\infty}(q^{j+1};q)_{\infty}\frac{(-1)^{j}q^{\frac{1}{2}j(j+1)+(m+k)j}}{(q;q)_{k-m+j}}\alpha^{-2j},
 \end{align*}
 one gets
 \[
   g_{k}(\alpha^{-1}q^{m})=(-1)^{m+k}\alpha^{2m-k}q^{-\frac{1}{2}m(3m+1)+\frac{1}{2}k(k+1)}\left(1+o\left(1\right)\right)\!, \quad k\to\infty.
 \]
 By taking into account asymptotic formula \eqref{eq:f_n_asympt_+inf} for $f_{k}(\alpha^{-1}q^{m})$, one obtains
 $$A=\lim_{k\rightarrow\infty}\frac{g_{k}(\alpha^{-1}q^{m})}{f_{k}(\alpha^{-1}q^{m})}=(-1)^{m}\alpha^{2m}q^{-\frac{1}{2}m(3m+1)}.$$
 
 Second, by using (\ref{eq:Wronsk_f_g}), one readily verifies that
 $$W'(f_{k}(\alpha^{-1}q^{m}),g_{k}(\alpha^{-1}q^{m}))=(-1)^{m}\alpha^{2}q^{-\frac{1}{2}m(m+3)}(q;q)^{2}_{\infty}.$$
 Identity (\ref{eq:norm_eigenvec_v_n}) then follows from the general formula (\ref{eq:norm_general_ident}).
\end{proof}

\subsection{The spectral resolution of $B$}

The aim of this subsection is to provide an explicit formula for an arbitrary matrix element of the spectral measure
of $B$, i.e., the measure
\[
 E_{k,l}(\cdot):=\langle e_{k},E_{B}(\cdot)e_{l} \rangle, \quad k,l\in\Z,
\]
where $E_{B}$ stands for the projection-valued spectral measure of the self-adjoint operator $B$.

From Proposition \ref{prop:spec_B}, one deduces that the measure $E_{k,l}$ restricted to the Borel sets located outside the interval $[-2,2]$ is discrete
supported on the eigenvalues $\alpha^{-1}q^{m}+\alpha q^{-m}$, $m>\lfloor\log_{q}|\alpha|\rfloor$. Taking also into account Proposition \ref{prop:norm_eigenvec_B}, one gets
\begin{equation}
 E_{k,l}\left(\{\alpha^{-1}q^{m}+\alpha q^{-m}\}\right)=\frac{v_{m,k}v_{m,l}}{\|\textbf{\textit{v}}_{m}\|^{2}}
 =\left(1-\alpha^{-2}q^{2m}\right)\frac{\alpha^{2m}q^{-m(m+1)}}{(q;q)_{\infty}^{2}}f_{k}\left(\alpha^{-1}q^{m}\right)f_{l}\left(\alpha^{-1}q^{m}\right)\!,
\label{eq:spec_meas_eigenval}
\end{equation}
for $k,l\in\Z$ and $\alpha\neq0$.

It remains to determine the part of the spectral measure within the essential spectrum of~$B$. The approach is based
on an expression for the Green function  
\[
 G_{k,l}(z):=\langle e_{k},(B-z)^{-1}e_{l}\rangle, \quad z\notin\spec(B),
\]
which, together with the Stieltjes-Perron inversion formula, allow to express the spectral measure from the Green function as
\begin{equation}
 E_{k,l}((a,b))=\lim_{\delta\to0+}\lim_{\epsilon\to0+}\frac{1}{2\pi\ii}\int_{a+\delta}^{b-\delta}
 \left(G_{k,l}(x+\ii\epsilon)-G_{k,l}(x-\ii\epsilon)\right)\dd x,
\label{eq:SP_inv_formula}
\end{equation}
for any open interval $(a,b)\subset\R$, see \cite[Thm.~XII.2.10]{DunfordSchwartzII}.

First, we will need the following connection formula.

\begin{lem}\label{lem:connect_formula}
 For all $z\notin q^{\Z/2}\cup\{0\}$, $\alpha\in\C\setminus\{0\}$ and $n\in\Z$ one has
 \[
  f_{n}(z)=\frac{\theta_{q}\left(\alpha z^{-1}\right)}{\theta_{q}\left(z^{-2}\right)}g_{n}(z)
  +\frac{\theta_{q}\left(\alpha z\right)}{\theta_{q}\left(z^{2}\right)}g_{n}\left(z^{-1}\right)
 \]
 where $f_{n}$ and $g_{n}$ are given by \eqref{eq:sol_f} and \eqref{eq:sol_g}.
\end{lem}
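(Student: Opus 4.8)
The plan is to express $f_n(z)$ as a linear combination of the two solutions $g_n(z)$ and $g_n(z^{-1})$ and to pin down the connection coefficients by the standard Wronskian trick. By Remark~\ref{rem:symmet_sol}, both $g(z)$ and $\tilde{g}(z)=g(z^{-1})$ solve \eqref{eq:eigen_eq_B}. Since the recurrence \eqref{eq:eigen_eq_B} has unit off-diagonal coefficients, the Wronskian $W(u,v)=u_{n+1}v_{n}-u_{n}v_{n+1}$ of any two solutions is independent of $n$; I will show below that $W(g(z),g(z^{-1}))=z^{-1}\theta_{q}(z^{2})$, which is nonzero since the hypotheses on $z$ guarantee $\theta_{q}(z^{2})\neq0$ (equivalently $\theta_{q}(z^{-2})\neq0$). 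Hence $g(z)$ and $g(z^{-1})$ are linearly independent, and $f(z)=c_{1}g(z)+c_{2}g(z^{-1})$ for constants $c_{1},c_{2}$ not depending on $n$. Applying $W(\,\cdot\,,g(z^{-1}))$ and $W(\,\cdot\,,g(z))$ to this identity and using antisymmetry and bilinearity of $W$ gives
\[
 c_{1}=\frac{W(f(z),g(z^{-1}))}{W(g(z),g(z^{-1}))}, \qquad
 c_{2}=\frac{W(f(z),g(z))}{W(g(z^{-1}),g(z))}.
\]

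Next I would evaluate the three Wronskians. The denominator $W(g(z),g(z^{-1}))$ I would compute by passing to the limit $n\to-\infty$ in $g_{n+1}(z)g_{n}(z^{-1})-g_{n}(z)g_{n+1}(z^{-1})$ using the asymptotics \eqref{eq:g_n_asympt_-inf}, namely $g_{n}(z)=z^{-n}[(qz^{2};q)_{\infty}+o(1)]$, which holds for every $z\neq0$ without restriction on $|z|$. This yields $(z^{-1}-z)(qz^{2};q)_{\infty}(qz^{-2};q)_{\infty}$, and the factorization $\theta_{q}(z^{2})=(z^{2};q)_{\infty}(qz^{-2};q)_{\infty}=(1-z^{2})(qz^{2};q)_{\infty}(qz^{-2};q)_{\infty}$ collapses this to $W(g(z),g(z^{-1}))=z^{-1}\theta_{q}(z^{2})$. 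For the numerators, Proposition~\ref{prop:two_sol_asympt_Wronks} already supplies $W(f(z),g(z))=-z^{-1}\theta_{q}(\alpha z)$, and for $W(f(z),g(z^{-1}))$ I would invoke the symmetry $f(z)=f(z^{-1})$ from Remark~\ref{rem:symmet_sol}: then $W(f(z),g(z^{-1}))=W(f(z^{-1}),g(z^{-1}))$, which is \eqref{eq:Wronsk_f_g} evaluated at $z^{-1}$, i.e. $-z\,\theta_{q}(\alpha z^{-1})$.

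Substituting these into the expressions for $c_{1},c_{2}$ gives $c_{2}=\theta_{q}(\alpha z)/\theta_{q}(z^{2})$ at once, and $c_{1}=-z^{2}\theta_{q}(\alpha z^{-1})/\theta_{q}(z^{2})$; a single application of the reciprocity identity \eqref{eq:theta_recip_arg} in the form $\theta_{q}(z^{2})=-z^{2}\theta_{q}(z^{-2})$ turns $c_{1}$ into $\theta_{q}(\alpha z^{-1})/\theta_{q}(z^{-2})$, which is exactly the claimed formula. The only genuinely computational step is the evaluation of $W(g(z),g(z^{-1}))$, and the main obstacle there is keeping the bookkeeping straight between the factors $(qz^{\pm2};q)_{\infty}$ and the theta functions; the $n$-independence of the Wronskian is what legitimizes taking the limit at $-\infty$, where \eqref{eq:g_n_asympt_-inf} applies unconditionally.
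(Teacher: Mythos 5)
Your proof is correct, and it reaches the coefficients by a genuinely different mechanism than the paper. The skeleton is shared: like the paper, you take $\{g(z),g(z^{-1})\}$ as a basis of solutions and compute $W(g(z),g(z^{-1}))=z^{-1}\theta_{q}(z^{2})$ from the $n\to-\infty$ asymptotics \eqref{eq:g_n_asympt_-inf}, which indeed hold without any restriction on $|z|$. The divergence is in how the connection coefficients are extracted. The paper writes $f(z)=A(z)g(z)+B(z)g(z^{-1})$, uses the symmetry $f_{n}(z)=f_{n}(z^{-1})$ to get $B(z)=A(z^{-1})$, and then matches asymptotics as $n\to-\infty$ using \eqref{eq:f_n_asympt_-inf_|x|<1}; since that expansion of $f_{n}$ is only valid for $0<|z|<1$, the paper must first prove the identity on the punctured disk, extend to $|z|>1$ by the $z\leftrightarrow z^{-1}$ invariance, and finish on $|z|=1$ by a continuity argument. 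You instead apply Cramer's rule with Wronskians: $W(f(z),g(z))=-z^{-1}\theta_{q}(\alpha z)$ is already available from \eqref{eq:Wronsk_f_g} for \emph{all} $z\neq0$ (it rests only on the $n\to+\infty$ expansions \eqref{eq:f_n_asympt_+inf}--\eqref{eq:g_n_asympt_+inf}), and $W(f(z),g(z^{-1}))=-z\,\theta_{q}(\alpha z^{-1})$ follows from the symmetry of $f$ together with \eqref{eq:Wronsk_f_g} evaluated at $z^{-1}$; the sign bookkeeping in your $c_{1},c_{2}$, including the use of $\theta_{q}(z^{2})=-z^{2}\theta_{q}(z^{-2})$, checks out. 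What your route buys is the elimination of the case split and of the final continuity step (the one slightly delicate point in the paper's argument), since all three Wronskians are valid on the full admissible set of $z$ at once; what it costs is nothing new, as every ingredient you invoke is already established in Proposition~\ref{prop:two_sol_asympt_Wronks}. One shared cosmetic caveat: both you and the paper read the hypothesis $z\notin q^{\Z/2}$ as guaranteeing $\theta_{q}(z^{\pm2})\neq0$, whereas strictly the zeros of $\theta_{q}(z^{2})$ are at $z\in\pm q^{\Z/2}$; this is an imprecision in the lemma's formulation, not in your argument.
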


\begin{proof}
 Let us denote $\tilde{g}_{n}(z):=g_{n}\left(z^{-1}\right)$ as in Remark \ref{rem:symmet_sol}. With the aid of the asymptotic formula \eqref{eq:g_n_asympt_-inf},
 one verifies 
 \[
 W(g,\tilde{g})=z^{-1}\theta_{q}\left(z^{2}\right)\!.
 \]
 Thus, for $z\notin q^{\Z/2}\cup\{0\}$, $g(z)$ and $\tilde{g}(z)$ are two linearly independent solutions of 
 \eqref{eq:eigen_eq_B}. Consequently, there exist coefficients $A(z)$ and $B(z)$ such that
 \begin{equation}
  f_{n}(z)=A(z)g_{n}(z)+B(z)\tilde{g}_{n}(z), \quad \forall n\in\Z.
 \label{eq:lin_komb_AB}
 \end{equation}
 Further, since $f_{n}(z)=f_{n}\left(z^{-1}\right)$, see Remark \ref{rem:symmet_sol}, from the identity \eqref{eq:lin_komb_AB}, it follows 
 \[
  \left(A(z)-B\left(z^{-1}\right)\right)g_{n}(z)=\left(A\left(z^{-1}\right)-B(z)\right)\tilde{g}_{n}(z), \quad \forall n\in\Z.
 \]
 Thus, $B(z)=A\left(z^{-1}\right)$, by the linear independence of $g(z)$ and $\tilde{g}(z)$. Finally, by assuming $|z|<1$
 in \eqref{eq:lin_komb_AB}, sending $n\to-\infty$ and using formulas \eqref{eq:f_n_asympt_-inf_|x|<1} and \eqref{eq:g_n_asympt_-inf},
 one obtains
 \[
  A(z)=\frac{\theta_{q}\left(\alpha z^{-1}\right)}{\theta_{q}\left(z^{-2}\right)}.
 \]
 Hence, the identity from the statement is established for $|z|<1$. Since the formula is invariant under exchange $z\leftrightarrow z^{-1}$, it
 remains true also for $|z|>1$. Finally, the validity of the identity on the unit circle $|z|=1$ follows from the continuity in $z$.
\end{proof}

Assume $0<|z|<1$. Recall that $f_{n}(z)$, given in \eqref{eq:sol_f}, is the solution of \eqref{eq:eigen_eq_B}
which is square summable at $+\infty$, as it follows from \eqref{eq:f_n_asympt_+inf}. Similarly,
$g_{n}(z)$, given in \eqref{eq:sol_g}, is another solution of \eqref{eq:eigen_eq_B} which is square summable 
at $-\infty$, as it follows from \eqref{eq:g_n_asympt_-inf}. Consequently, the operator $G\left(\mu(z)\right)$ 
whose matrix elements are defined by
\begin{equation}
 G_{k,l}\left(\mu(z)\right)=\frac{1}{W(f,g)}\begin{cases}
                                               g_{k}(z)f_{l}(z),& \; k\leq l,\\
                                               g_{l}(z)f_{k}(z),& \; k\geq l,
                                              \end{cases}
\label{eq:G_kl}
\end{equation}
for $0<|z|<1$ and $z\neq\alpha^{-1}q^{m}$, $m>\lfloor\log_{q}|\alpha|\rfloor$, coincides with the resolvent operator $\left(B-\mu(z)\right)^{-1}$.

\begin{prop}
 Let $\alpha\neq0$ and $-2\leq a<b\leq 2$. Then for any $k,l\in\Z$, it holds
 \begin{equation}
  E_{k,l}\left([a,b]\right)=\frac{1}{2\pi}\int_{\phi_{b}}^{\phi_{a}}f_{l}\left(e^{\ii\phi}\right)f_{k}\left(e^{\ii\phi}\right)
  \left|\frac{\left(e^{2\ii\phi};q\right)_{\infty}}{\left(\alpha e^{\ii\phi},q\alpha^{-1}e^{-\ii\phi};q\right)_{\infty}}\right|^{2}\!\dd\phi
 \label{eq:spectr_meas_in_ess}
 \end{equation}
 where $\phi_{a}=\arccos\left(a/2\right)$ and $\phi_{b}=\arccos\left(b/2\right)$. Consequently,
 $\spec_{ac}(B)=[-2,2]$.
\end{prop}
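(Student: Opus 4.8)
The plan is to feed the explicit resolvent kernel \eqref{eq:G_kl} into the Stieltjes--Perron inversion formula \eqref{eq:SP_inv_formula} and to identify the resulting boundary density. First I would pin down the correspondence between the two banks of the cut and the unit circle. Since $\mu$ maps $\{0<|z|<1\}$ bijectively onto $\C\setminus[-2,2]$, writing $z=re^{\ii\psi}$ gives $\Im\mu(z)=(r-r^{-1})\sin\psi$, and because $r-r^{-1}<0$ the interior preimage of $x+\ii0$ (with $x=2\cos\phi$, $\phi\in(0,\pi)$) is $z=e^{-\ii\phi}$, while that of $x-\ii0$ is $z=e^{\ii\phi}$. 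Thus the boundary values of $G_{k,l}$ are read off from \eqref{eq:G_kl} by letting $z\to e^{\mp\ii\phi}$. Since the $q$-series defining $f,g$ converge up to the circle and $W(f,g)=-z^{-1}\theta_{q}(\alpha z)$ is non-vanishing for $\phi\in(0,\pi)$ when $\alpha\in(q,1]$ (and $\theta_{q}(e^{2\ii\phi})\ne0$ there), these limits exist and are continuous on $(-2,2)$; this both justifies the limiting absorption in \eqref{eq:SP_inv_formula} and shows that the spectral measure restricted to the cut has a continuous Lebesgue density, hence is purely absolutely continuous.

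Next I would exploit reality. Because $\alpha,q\in\R$, the functions $f_n,g_n,\theta_{q}$ have real coefficients, so $\overline{g_k(e^{\ii\phi})}=g_k(e^{-\ii\phi})$ and $\overline{W(f,g)|_{e^{\ii\phi}}}=W(f,g)|_{e^{-\ii\phi}}$; moreover $\overline{f_n(e^{\ii\phi})}=f_n(e^{-\ii\phi})=f_n(e^{\ii\phi})$ is real, using the $z\leftrightarrow z^{-1}$ symmetry of Remark \ref{rem:symmet_sol}. For $k\le l$ this yields $G_{k,l}(x+\ii0)=\overline{G_{k,l}(x-\ii0)}$, so the Stieltjes--Perron integrand collapses to $\tfrac1\pi\Im G_{k,l}(x+\ii0)=\tfrac1\pi f_l(e^{\ii\phi})\,\Im\!\big(e^{\ii\phi}g_k(e^{\ii\phi})/\theta_{q}(\alpha e^{\ii\phi})\big)$; the case $k\ge l$ gives the same by symmetry of \eqref{eq:G_kl}. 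The substitution $x=2\cos\phi$, $\dd x=-2\sin\phi\,\dd\phi$ then converts the $x$-integral over $[a,b]$ into the $\phi$-integral over $[\phi_b,\phi_a]$ of the statement, up to the scalar identity discussed below.

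The heart of the argument is the pointwise identity
\[
 4\sin\phi\,\Im\!\left(\frac{e^{\ii\phi}g_k(e^{\ii\phi})}{\theta_{q}(\alpha e^{\ii\phi})}\right)
 = f_k(e^{\ii\phi})\left|\frac{(e^{2\ii\phi};q)_{\infty}}{(\alpha e^{\ii\phi},q\alpha^{-1}e^{-\ii\phi};q)_{\infty}}\right|^{2},
\]
which turns the density into the advertised symmetric form $\tfrac1{2\pi}f_k f_l\times(\text{weight})$. To prove it I would insert the connection formula of Lemma \ref{lem:connect_formula} at $z=e^{\ii\phi}$, writing $f_k(e^{\ii\phi})=\overline{\beta}\,g_k(e^{\ii\phi})+\beta\,\overline{g_k(e^{\ii\phi})}$ with $\beta=\theta_{q}(\alpha e^{\ii\phi})/\theta_{q}(e^{2\ii\phi})$, and recognizing from the definition \eqref{eqdeftheta} that $(\alpha e^{\ii\phi},q\alpha^{-1}e^{-\ii\phi};q)_{\infty}=\theta_{q}(\alpha e^{\ii\phi})$, so the squared modulus on the right equals $(e^{2\ii\phi},e^{-2\ii\phi};q)_{\infty}/\big(\theta_{q}(\alpha e^{\ii\phi})\,\theta_{q}(\alpha e^{-\ii\phi})\big)$. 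Clearing the denominators $\theta_{q}(\alpha e^{\pm\ii\phi})$, the identity reduces to matching the coefficients of $g_k(e^{\ii\phi})$ and of $\overline{g_k(e^{\ii\phi})}$ separately (legitimate since both must agree for all $k$), and each match collapses to the elementary $q$-shift $(e^{\pm2\ii\phi};q)_{\infty}/(qe^{\pm2\ii\phi};q)_{\infty}=1-e^{\pm2\ii\phi}=\mp2\ii\sin\phi\,e^{\pm\ii\phi}$. I expect this theta-function bookkeeping to be the only delicate step; the saving grace is that the two coefficient conditions are complex conjugates of one another, so verifying the $g_k$-coefficient gives the $\overline{g_k}$-coefficient for free.

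Finally, for the spectral conclusion I would take $k=l$: the weight is strictly positive on $(0,\pi)$ and $f_l$ is not identically zero, so each diagonal density $\tfrac1{2\pi}f_l(e^{\ii\phi})^{2}\times(\text{weight})$ is positive for a.e.\ $\phi$, and the corresponding scalar measure is absolutely continuous with support filling $[-2,2]$. Combined with $\spec_{ess}(B)=[-2,2]$ from Proposition \ref{prop:ess_spec_B} and the absence of embedded eigenvalues from Proposition \ref{prop:no_sol_ell2_embedded}, this yields $\spec_{ac}(B)=[-2,2]$ with no singular continuous part inside the cut.
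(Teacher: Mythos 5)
Your proposal is correct and takes essentially the same route as the paper's proof: Stieltjes--Perron inversion \eqref{eq:SP_inv_formula} applied to the explicit Green kernel \eqref{eq:G_kl}, the same boundary correspondence $x\pm\ii 0 \leftrightarrow z\to e^{\mp\ii\phi}$, and the same key simplification via the connection formula of Lemma~\ref{lem:connect_formula} followed by the elementary reduction $(x;q)_{\infty}/(qx;q)_{\infty}=1-x$ identifying the weight. Your reorganization of the jump as $\tfrac{1}{\pi}\Im G_{k,l}(x+\ii 0)$ and the coefficient-matching against $g_k$ and $\overline{g_k}$ (rather than the paper's direct substitution using \eqref{eq:theta_recip_arg}) is only a cosmetic variant, and all your intermediate identities check out.
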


\begin{proof}
 Since $E_{k,l}(\cdot)=E_{l,k}(\cdot)$, we may assume $k\leq l$ throughout the proof.
 The derivation is based on the calculation of the integrand in the Stieltjes-Perron formula \eqref{eq:SP_inv_formula}. First, we investigate
 the limit
 \begin{equation}
  \lim_{\epsilon\to0+}\left(G_{k,l}\left(\mu(z)+\ii\epsilon\right)-G_{k,l}\left(\mu(z)-\ii\epsilon\right)\right).
 \label{eq:lim_Green_integr}
 \end{equation}
 for $\mu(z)\in(-2,2)$.
 
 Note that, if $\mu(z)$ approaches $2\cos\phi$, for $\phi\in(0,\pi)$, from the lower half-plane $\{z\in\C\mid\Im z<0\}$,
 then $z$ approaches the point $e^{\ii\phi}$ from the interior of the unit disk. Consequently, one verifies that
 \[
  \lim_{\substack{\mu(z)\to2\cos\phi \\ \Im\mu(z)<0}}f_{n}(z)=f_{n}\left(e^{\ii\phi}\right)
 \]
 while, by taking complex conjugates, that
 \[
  \lim_{\substack{\mu(z)\to2\cos\phi \\ \Im\mu(z)>0}}f_{n}(z)=f_{n}\left(e^{-\ii\phi}\right)\!.
 \]
 Analogous formulas hold for the function $g_{n}(z)$. By Remark \ref{rem:symmet_sol}, one has 
 $f_{n}\left(e^{\ii\phi}\right)=f_{n}\left(e^{-\ii\phi}\right)$. Note, however, that this is not 
 true in the case of the second solution $g_{n}(x)$.
 
 Now, we can calculate the limit \eqref{eq:lim_Green_integr}. By using \eqref{eq:G_kl}, \eqref{eq:Wronsk_f_g} together with the above limit relations, one gets
 \begin{align*}
  &\lim_{\epsilon\to0+}\!\left(G_{k,l}\left(2\cos\phi+\ii\epsilon\right)-G_{k,l}\left(2\cos\phi-\ii\epsilon\right)\right) 
  =f_{l}\left(e^{\ii\phi}\right)\!\!\left[-\frac{g_{k}\left(e^{-\ii\phi}\right)}{e^{\ii\phi}\theta_{q}\left(\alpha e^{-\ii\phi}\right)}
  +\frac{g_{k}\left(e^{\ii\phi}\right)}{e^{-\ii\phi}\theta_{q}\left(\alpha e^{\ii\phi}\right)}\right]\\
  &=\frac{f_{l}\left(e^{\ii\phi}\right)}{\left|\theta_{q}\left(\alpha e^{\ii\phi}\right)\right|^{2}}
  \left[-e^{-\ii\phi}\theta_{q}\left(\alpha e^{\ii\phi}\right)g_{k}\left(e^{-\ii\phi}\right)+e^{\ii\phi}\theta_{q}\left(\alpha e^{-\ii\phi}\right)g_{k}\left(e^{\ii\phi}\right)\right]
 \end{align*}
 By applying Lemma \ref{lem:connect_formula} together with \eqref{eq:theta_recip_arg}, the above expression in the square brackets can be further simplified
 and one arrives at the relation
 \begin{equation}
  \lim_{\epsilon\to0+}\!\left(G_{k,l}\left(2\cos\phi+\ii\epsilon\right)-G_{k,l}\left(2\cos\phi-\ii\epsilon\right)\right)
  =-\frac{e^{-\ii\phi}\theta_{q}\left(e^{2\ii\phi}\right)}{\left|\theta_{q}\left(\alpha e^{\ii\phi}\right)\right|^{2}}f_{l}\left(e^{\ii\phi}\right)f_{k}\left(e^{\ii\phi}\right)\!,
 \label{eq:lim_eps_dif_G}
 \end{equation}
 for $\phi\in(0,\pi)$.
 
 Hence for any $-2<a<b<2$ we may use the Lebesgue dominating convergence theorem and interchange the limit $\epsilon\to0+$ and the integral in the formula \eqref{eq:SP_inv_formula} which together
 with a change of variables and \eqref{eq:lim_eps_dif_G} yields
 \begin{align*}
 E_{k,l}((a,b))&=\frac{1}{\pi}\int_{\phi_{b}}^{\phi_{a}}f_{l}\left(e^{\ii\phi}\right)f_{k}\left(e^{\ii\phi}\right)\frac{\ii e^{-\ii\phi}\theta_{q}\left(e^{2\ii\phi}\right)}{\left|\theta_{q}\left(\alpha e^{\ii\phi}\right)\right|^{2}}\sin\phi\ \dd\phi\\
 &=\frac{1}{2\pi}\int_{\phi_{b}}^{\phi_{a}}f_{l}\left(e^{\ii\phi}\right)f_{k}\left(e^{\ii\phi}\right)\left|\frac{\left(e^{2\ii\phi};q\right)_{\infty}}{\left(\alpha e^{\ii\phi},q\alpha^{-1}e^{-\ii\phi};q\right)_{\infty}}\right|^{2}\dd\phi.
 \end{align*}
 Finally, the statement follows by verifying that the end points $\pm2$ are not eigenvalues of $B$. This, however, has been shown in Proposition \ref{prop:spec_B}.
\end{proof}

By putting together the discrete part of the spectral measure \eqref{eq:spec_meas_eigenval} with the absolutely continuous part \eqref{eq:spectr_meas_in_ess},
one arrives at the following statement.

\begin{thm}\label{thm:spec_meas_B}
 Let $\alpha\neq0$ and $\mathcal{A}\subset\R$ be a Borel set. Then for all $k,l\in\Z$, it holds
 \begin{align*}
  E_{k,l}(\mathcal{A})&=\frac{1}{2\pi}\int_{\mathcal{A}_{c}}f_{l}\left(e^{\ii\phi}\right)f_{k}\left(e^{\ii\phi}\right)
  \left|\frac{\left(e^{2\ii\phi};q\right)_{\infty}}{\left(\alpha e^{\ii\phi},q\alpha^{-1}e^{-\ii\phi};q\right)_{\infty}}\right|^{2}\!\dd\phi\\
  &+\frac{1}{(q;q)_{\infty}^{2}}\sum_{m\in\mathcal{A}_{d}}\left(1-\alpha^{-2}q^{2m}\right)\alpha^{2m}q^{-m(m+1)}f_{l}\left(\alpha^{-1}q^{m}\right)f_{k}\left(\alpha^{-1}q^{m}\right)\!.
 \end{align*}
 where
 \[
  \mathcal{A}_{c}=\{\phi\in[0,\pi] \mid 2\cos\phi\in[-2,2]\cap\mathcal{A}\} \quad\mbox{ and }\quad
  \mathcal{A}_{d}=\{m>\lfloor\log|\alpha|\rfloor \mid \alpha^{-1}q^{m}+\alpha q^{-m}\in\mathcal{A}\}.
 \]
\end{thm}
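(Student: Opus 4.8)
The plan is to assemble the two ingredients already established: the purely absolutely continuous part of the spectral measure carried by $[-2,2]$ and the pure point part carried by the eigenvalues lying in $\R\setminus[-2,2]$. By Proposition~\ref{prop:ess_spec_B}, Proposition~\ref{prop:no_sol_ell2_embedded}, and the preceding proposition, the spectrum of $B$ splits as the disjoint union of the purely absolutely continuous part $[-2,2]$ (with no embedded eigenvalues) and the discrete point spectrum $\spec_{p}(B)=\{\alpha^{-1}q^{m}+\alpha q^{-m}\mid m>\lfloor\log_{q}|\alpha|\rfloor\}$ located in $\R\setminus[-2,2]$, identified in Proposition~\ref{prop:spec_B}. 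Since the endpoints $\pm2$ are not eigenvalues and carry no absolutely continuous mass, for any Borel set $\mathcal{A}$ I would write
\[
 E_{k,l}(\mathcal{A})=E_{k,l}\bigl(\mathcal{A}\cap(-2,2)\bigr)+E_{k,l}\bigl(\mathcal{A}\cap(\R\setminus[-2,2])\bigr)
\]
and treat the two terms separately.

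For the absolutely continuous term, the preceding proposition already evaluates $E_{k,l}([a,b])$ for every subinterval $[a,b]\subset[-2,2]$ as the integral in \eqref{eq:spectr_meas_in_ess}; after the substitution $x=2\cos\phi$ this exhibits the Lebesgue density of $E_{k,l}$ on $(-2,2)$. Since the right-hand side of \eqref{eq:spectr_meas_in_ess} defines a finite complex measure that agrees with $E_{k,l}$ on all subintervals, and the intervals form a $\pi$-system generating the Borel $\sigma$-algebra of $(-2,2)$, the uniqueness theorem for finite signed/complex measures forces the two to coincide on every Borel subset. This yields the first term, integrated over $\mathcal{A}_{c}$.

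For the pure point term, Proposition~\ref{prop:spec_B} shows that on $\R\setminus[-2,2]$ the measure $E_{k,l}$ is a sum of point masses concentrated on the simple eigenvalues $\alpha^{-1}q^{m}+\alpha q^{-m}$. Each such mass was computed in \eqref{eq:spec_meas_eigenval}, using the eigenvector \eqref{eq:eigenvec_v_mj} together with its norm from Proposition~\ref{prop:norm_eigenvec_B}. Summing these over the indices $m\in\mathcal{A}_{d}$ produces the second term, and adding the two contributions gives the asserted formula.

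The work here is essentially bookkeeping, so the only genuine point requiring care is the measure-theoretic extension in the absolutely continuous part: one must argue that knowing $E_{k,l}$ on all intervals pins down the density rather than merely its integral over fixed intervals. I expect this to be the main (and only mild) obstacle, resolved by the standard uniqueness of finite measures agreeing on a generating $\pi$-system, together with the observation that the density in \eqref{eq:spectr_meas_in_ess} vanishes at $\phi=0,\pi$, so that the endpoints $x=\pm2$ contribute nothing.
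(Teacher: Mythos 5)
Your proposal is correct and takes essentially the same route as the paper, which states the theorem as an immediate assembly of the discrete point masses \eqref{eq:spec_meas_eigenval} with the absolutely continuous part \eqref{eq:spectr_meas_in_ess} and the fact that $\pm2$ are not eigenvalues. The only addition is your explicit $\pi$-system uniqueness argument extending \eqref{eq:spectr_meas_in_ess} from intervals to arbitrary Borel subsets of $(-2,2)$, a standard measure-theoretic point the paper leaves implicit.
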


\subsection{Consequences for the third Jackson $q$-Bessel function}

 Recall the third Jackson $q$-Bessel function is defined as
 \[
  J_{\nu}(z;q):=\frac{z^{\nu}}{(q;q)_{\infty}}{}_{1}\tilde{\phi}_{1}\left(0;q^{\nu+1};q,qz^{2}\right)\!,
 \]
 see, for example, \cite{KoornwinderSwarttouw, Swarttouw}. In addition, we have the relation \cite[Eq.~3.2.12]{Swarttouw}
 \[
  J_{n}(z;q)=(-1)^{n}q^{-n/2}J_{-n}\left(zq^{-n/2};q\right)\!, \quad n\in\Z.
 \]
 These two formulas allow us to express the elements of the eigenvectors $\textbf{\textit{v}}_{m}$, given by \eqref{eq:eigenvec_v_mj}, in the form
 \begin{equation}
  v_{m,j}=(-1)^{m}\alpha^{-m}q^{m(m+1)/2}(q;q)_{\infty}\,J_{m-j}\left(\alpha^{-1}q^{m};q\right)\!,
  \label{eq:v_mj_rel_qBessel}
 \end{equation}
 for $j\in\Z$ and $m>\lfloor\log_{q}|\alpha|\rfloor$. 

Since $\langle \textbf{\textit{v}}_{n},\textbf{\textit{v}}_{m}\rangle=\|\textbf{\textit{v}}_{m}\|^{2}\delta_{m,n}$,
for $m,n>\lfloor\log_{q}|\alpha|\rfloor$, identities \eqref{eq:v_mj_rel_qBessel} and \eqref{eq:norm_eigenvec_v_n} 
yield the orthogonality relation
\begin{equation}
 \sum_{j\in\Z}J_{j+m}\left(\alpha^{-1}q^{m};q\right)J_{j+n}\left(\alpha^{-1}q^{n};q\right)=\frac{\delta_{m,n}}{1-\alpha^{-2}q^{2m}},
\label{eq:OG_rel_qBess}
\end{equation}
for $m,n>\lfloor\log_{q}|\alpha|\rfloor$ and $\alpha\neq0$.
In particular, if $m=n$, one has
\[
 \sum_{j\in\Z}J_{j}^{2}\left(\alpha^{-1}q^{m};q\right)=\frac{1}{1-\alpha^{-2}q^{2m}}.
\]
Since $\{\alpha^{-1}q^{m} \mid \alpha\neq0 \wedge m>\lfloor\log_{q}|\alpha|\rfloor\}=(-1,1)\setminus\{0\}$,
we arrive at the summation formula
\begin{equation}
 \sum_{j\in\Z}J_{j}^{2}\left(x;q\right)=\frac{1}{1-x^{2}}
\label{eq:sum_form_qBess}
\end{equation}
where $x\in(-1,1)\setminus\{0\}$. However, the validity of \eqref{eq:sum_form_qBess} 
can be extended to $|x|<1$ since the sum on the LHS of \eqref{eq:sum_form_qBess} converges locally uniformly on $|x|<1$, and
the analyticity argument applies.

The summation formula \eqref{eq:sum_form_qBess} as well as the orthogonality relation \eqref{eq:OG_rel_qBess} follow from a more general identity derived by 
Koornwinder and Swarttouw in \cite{KoornwinderSwarttouw}. Namely, one obtains \eqref{eq:sum_form_qBess} by putting $m=n=0$, $s=q^{-1}$ and $z=q^{1/2}x$ in 
\cite[Eq.~(4.8)]{KoornwinderSwarttouw}, while \eqref{eq:OG_rel_qBess} follows from \cite[Eq.~(4.7)]{KoornwinderSwarttouw} by setting $s=q^{-1}$, $x=q^{m+1/2}\alpha^{-1}$
and $y=q^{n+1/2}\alpha^{-1}$. 

Let us point out that \eqref{eq:sum_form_qBess} is a $q$-analogue to the the well known identity for Bessel functions of the first kind  \cite[Eq.~9.1.76]{AbramowitzStegun}
\begin{equation}
 \sum_{j\in\Z}J_{j}^{2}(z)=1,
\label{eq:sum_form_Bess}
\end{equation} 
since by putting $x=(1-q)z$ in \eqref{eq:sum_form_qBess} and sending $q\to1-$, one arrives at \eqref{eq:sum_form_Bess}. Let us also remark that 
Koornwinder and Swarttouw found another $q$-analogue to \eqref{eq:sum_form_Bess}, namely
\[
  \sum_{j\in\Z}q^{j}J_{j}^{2}\left(z;q\right)=1, \quad |z|<1,
\]
as it follows from \cite[Eq.~(2.10)]{KoornwinderSwarttouw} by putting $m=n=0$ and writing $q^{1/2}z$ instead of $z$.

Finally, if we put $\mathcal{A}=\R$ in Theorem \ref{thm:spec_meas_B} we obtain the orthogonality relation for the confluent
basic hypergeometric series as in \eqref{eq:sol_f},
\begin{align*}
  \frac{1}{2\pi}\int_{0}^{\pi}&f_{l}\left(e^{\ii\phi}\right)f_{k}\left(e^{\ii\phi}\right)
  \left|\frac{\left(e^{2\ii\phi};q\right)_{\infty}}{\left(\alpha e^{\ii\phi},q\alpha^{-1}e^{-\ii\phi};q\right)_{\infty}}\right|^{2}\!\dd\phi\\
  &+\frac{1}{(q;q)_{\infty}^{2}}\sum_{m>\lfloor\log|\alpha|\rfloor}\left(1-\alpha^{-2}q^{2m}\right)\alpha^{2m}q^{-m(m+1)}f_{l}\left(\alpha^{-1}q^{m}\right)f_{k}\left(\alpha^{-1}q^{m}\right)=\delta_{l,k},
\end{align*}
for all $k,l\in\Z$ and $\alpha\neq0$. Note that the absolutely continuous part of the measure in the above integral coincides with the Askey--Wilson measure with a
particular choice of parameters, namely $a=\alpha$, $b=\alpha^{-1}q$ and $c=d=0$, see \cite[Chp.~6]{GasperRahman} and \cite{AskeyWilson}.

\section*{Acknowledgements}
Research of Mourad E. H. Ismail was partially supported by the DSFP of King Saud 
 University and by  the National Plan for Science, Technology and innovation (MAARIFAH), 
 King Abdelaziz City for Science and Technology, Kingdom of Saudi Arabia, Award number 14-MAT623-02.
 F.~{\v S}tampach thanks to Professor Tom Koornwinder for pointing out the connection between 
 results of \cite{KoornwinderSwarttouw} and formulas \eqref{eq:OG_rel_qBess} and \eqref{eq:sum_form_qBess}.

\end{document}